\documentclass[11pt]{amsart}
\usepackage[table]{xcolor}
\usepackage{amssymb,graphicx,tikz}
\usepackage{amsthm}
\usepackage{amsmath}
\newcommand{\TryPackage}[3]{\IfFileExists{#1.sty}{\usepackage{#1}#2}{#3}}
\TryPackage{mathrsfs}{\renewcommand{\mathcal}{\mathscr}}{%
     \TryPackage{eucal}{}{}}
\usepackage[colorlinks=true,urlcolor=blue,linkcolor=blue,citecolor=blue]{hyperref}
\usepackage[noabbrev]{cleveref}


\usepackage[normalem]{ulem} 
\usepackage{soul} 
\setstcolor{red}
\usepackage{caption}
\usepackage{subcaption}


\newtheorem{theorem}{Theorem}

\newtheorem{conjecture}[theorem]{Conjecture}
\newtheorem{corollary}[theorem]{Corollary}
\newtheorem{definition}[theorem]{Definition}

\newtheorem{lemma}[theorem]{Lemma}
\newtheorem{proposition}[theorem]{Proposition}

\theoremstyle{remark}
\newtheorem{remark}[theorem]{Remark}
\newtheorem{example}[theorem]{Example}

\newcommand{\bpr}{\begin{proof}}
\newcommand{\epr}{\end{proof}}


\newcommand{\la}{\langle}
\newcommand{\ra}{\rangle}
\newcommand{\comment}[1]{\,}
\newcommand{\wh}{\widehat}
\newcommand{\cal}{\mathcal}
\newcommand{\p}{\partial}

\newcommand{\Z}{\mathbb Z}

\newcommand{\R}{\mathbb R}
\newcommand{\C}{\mathbb C}

\newcommand{\cS}{\mathfrak S}

\newcommand{\sL}{\mathscr L}
\newcommand{\sS}{\mathscr S}

\setlength{\unitlength}{1in}

\newcommand{\sm}{\smallsetminus}
 \newcommand{\lto}{\longrightarrow}

\newcommand{\diag}[2]{\parbox{#2}{\includegraphics[width=#2]{Figures/#1.pdf}}}

\newcommand{\KP}[1]{%
  \begin{tikzpicture}[baseline=-\dimexpr\fontdimen22\textfont2\relax]
  #1
  \end{tikzpicture}%
}
\newcommand{\KPC}{%
  \KP{\filldraw[color=black, fill=none, thick] circle (0.18);}%
}
\newcommand{\KPX}{%
  \KP{
    \draw[color=black, thick] (-0.3,-0.3) -- (0.3,0.3);
    \draw[color=black, thick] (-0.3,0.3) -- (-0.05,0.05);
    \draw[color=black, thick] (0.05,-0.05) -- (0.3,-0.3);
  }%
}
\newcommand{\KPB}{%
  \KP{%
    \draw[color=black, thick] (-0.3,0.3) .. controls (0,-0.02) .. (0.3,0.3);
    \draw[color=black, thick] (-0.3,-0.3) .. controls (0,0.02) .. (0.3,-0.3);
  }%
}
\newcommand{\KPA}{%
  \KP{%
    \draw[color=black, thick] (-0.3,-0.3) .. controls (0.02,0) .. (-0.3,0.3);
    \draw[color=black, thick] (0.3,-0.3) .. controls (-0.02,0) .. (0.3,0.3);
  }%
}
\newcommand{\KPXP}{%
  \KP{
    \draw[color=black, thick,-stealth] (-0.3,0.3) -- (0.3,-0.3);
    \draw[color=black, thick] (-0.3,-0.3) -- (-0.05,-0.05);
    \draw[color=black, thick,-stealth] (0.05,0.05) -- (0.3,0.3);
  }%
}
\newcommand{\KPXN}{%
  \KP{
    \draw[color=black, thick,-stealth] (-0.3,-0.3) -- (0.3,0.3);
    \draw[color=black, thick] (-0.3,0.3) -- (-0.05,0.05);
    \draw[color=black, thick,-stealth] (0.05,-0.05) -- (0.3,-0.3);
  }%
}
\newcommand{\KPXO}{%
  \KP{
    \draw[color=black, thick] (-0.4,-0.1)  .. controls (-0.25,-0.2) and (-0.15,0.1) .. (0,0.10).. controls (0.15,0.1) and (0.25,-0.2)..  (0.4,-0.1);
  }%
}
\newcommand{\KPXQ}{%
  \KP{%
    \draw[color=black, thick] (-0.45,-0.2) .. controls (0.52,0.3) and (-0.7,0.3) .. (-0.25,0.00);
    \draw[color=black, thick] (0.45,-0.2) .. controls (-0.52,0.3) and (0.7,0.3) .. (0.25,0.00);
    \draw[color=black, thick] (-0.14,-0.08) .. controls (-0.04,-0.15) and (0.04,-0.15) .. (0.14,-0.08);
  }%
}

\newcommand{\KPXR}{%
  \KP{
    \draw[color=black, thick] (-0.45,-0.2) -- (-0.3,-0.1);
    \draw[color=black, thick] (0.45,-0.2) -- (0.3,-0.1);
    \draw[color=black, thick] (-0.2,-0.03) .. controls (0.25,0.3) and (-0.8,0.3) ..  (-0.18,-0.12) .. controls (-0.04,-0.22) and (0.04,-0.22) .. (0.18,-0.12) .. controls (0.8,0.3) and (-0.25,0.3) ..(0.2,-0.03);
  }%
}

\textwidth 5.8in    
\textheight 8.65in
\oddsidemargin 0.4in    
\evensidemargin 0.4in     
\footskip 0.5in
\headsep= 0.35in
\topmargin -0.05in

%
%

\title[Adequate links and the generalized Tait conjectures]{Adequate links in thickened surfaces and \\the generalized Tait conjectures}

\author{Hans U. Boden, Homayun Karimi, Adam S. Sikora}

\thanks{The first author was partially funded by the Natural Sciences and Engineering Research Council of Canada.}



\begin{document}

\thispagestyle{empty}

\begin{abstract}
In this paper, we apply Kauffman bracket skein algebras to develop a theory of skein adequate links in thickened surfaces.
We show that any alternating link diagram on a surface is skein adequate. We apply our theory to establish the first and second Tait conjectures for adequate links in thickened surfaces.
Our notion of skein adequacy is broader and more powerful than the corresponding notions of adequacy previously considered for link diagrams in surfaces.

For a link diagram $D$ on a surface $\Sigma$ of minimal genus $g(\Sigma)$, we show that
$${\rm span}([D]_\Sigma) \leq 4c(D) + 4 |D|-4g(\Sigma),$$
where $[D]_\Sigma$ is its skein bracket, $|D|$ is the number of connected components of $D$, and $c(D)$ is the number of crossings.
This extends a classical result of Kauffman, Murasugi, and Thistlethwaite. We further show that the above inequality is an equality if and only if $D$ is weakly alternating.
This is a generalization of a well-known result for classical links due to Thistlethwaite.
Thus the skein bracket  detects the crossing number for weakly alternating links. As an application, we show that the crossing number is additive under connected sum for adequate links in thickened surfaces.
\end{abstract}

\address{Mathematics \& Statistics, McMaster University, Hamilton, Ontario}
\email{boden@mcmaster.ca}
\address{Mathematics \& Statistics, McMaster University, Hamilton, Ontario}
\email{karimih@math.mcmaster.ca}
\address{Dept. of Mathematics, University at Buffalo, SUNY, Buffalo, NY 14260}
\email{asikora@buffalo.edu}

\subjclass[2020]{Primary: 57K10, 57K12, 57K14, 57K31}
\keywords{Kauffman skein bracket, adequate diagram, alternating link, Tait conjectures.}

\pagestyle{myheadings}

\maketitle

\section{Introduction}

The Kauffman bracket is a $\Z[A^{\pm 1}]$-valued invariant of framed links in $\R^3$ determined by the skein relations:

\begin{equation}\label{e-KB}
\KPX-A \KPA -A^{-1}\KPB\ \text{and}\ \KPC-\delta,
\end{equation}
where $\delta=-A^2-A^{-2}.$

It naturally extends to an invariant of framed links in an arbitrary oriented $3$-manifold $M$ (possibly with boundary), via the skein module construction:
let $\sL(M)$ be the set of all unoriented, framed links in $M,$ including the empty link $\varnothing.$ The {\bf skein module} $\sS(M)$ of $M$ is the quotient of the free $\Z[A^{\pm 1}]$-module spanned by $\sL(M)$ by the submodule generated by the Kauffman bracket skein relations \eqref{e-KB}, cf. \cite{Przytycki-1999}, \cite{Turaev-1990, Turaev-1991}.

By this construction, the bracket
$$[\, \cdot \,] \colon  \sL(M)\to \sS(M),$$
sending framed links to their equivalence classes in $\sS(M)$, called the {\bf skein bracket}, is the universal invariant of framed links in $M$ satisfying  \eqref{e-KB}.

Independently of this initial motivation, skein modules quickly began to play a  much broader role in the development of quantum topology, for example in connection with $SL(2,\C)$ character varieties \cite{Bullock-1997, PS-2000, FKL-2019, Turaev-1991, BFK-1999},  topological quantum field theory, \cite{BHMV-1995, Turaev-1994}, (quantum) Teichm\"uller spaces and (quantum) cluster algebras \cite{BW-2011, CL-2019, FGo-2006, FST-2008, Muller-2016},  the AJ conjecture \cite{FGL-2002, Le-2006}, and many more.

In this paper we develop a general theory of skein adequacy (called adequacy, for short) for links in thickened surfaces with the aid of skein modules.

Let $\Sigma$ be an oriented surface and $I=[0,1]$ be the unit interval. The skein module of the thickened surface $\Sigma \times I$ comes naturally equipped with a product structure given by stacking, i.e., the product $L_1\cdot L_2$ is defined by placing $L_1$ on top of $L_2$ in $\Sigma \times I$. With this product structure, the skein module $\sS(\Sigma \times I)$ becomes an algebra over $\Z[A^{\pm 1}]$.

Let $\cal{C}(\Sigma)$  denote the set of all non-trivial unoriented simple loops on $\Sigma$ up to isotopy and $\cal{MC}(\Sigma)$ denote the set of all non-trivial unoriented multi-loops on $\Sigma$, i.e., collections of pairwise disjoint simple non-contractible loops, including $\varnothing$, up to isotopy. Then by \cite{Przytycki-1999} (cf., \cite{Sikora-Westbury}), the skein module $\sS(\Sigma\times I)$ is a free $\Z[A^{\pm 1}]$-module with basis $\cal{MC}(\Sigma)$. Consequently, via this identification, the skein bracket gives a map
\begin{equation}\label{e-bra}
[\, \cdot \, ]_\Sigma \colon \sL(\Sigma\times I)\to \sS(\Sigma\times I)=\Z[A^{\pm 1}]\cal{MC}(\Sigma).
\end{equation}

We use the association \eqref{e-bra} to develop a theory of skein adequacy for links in $\Sigma\times I$ which extends that for classical links. This theory is broader and more powerful than the corresponding notions of simple adequacy \cite{Lickorish-Thistlethwaite} and homological adequacy  \cite{Boden-Karimi-2019}. For example, we will see that every weakly alternating link in $\Sigma\times I$ without removable nugatory crossings is skein adequate.

We will apply the skein bracket to establish the first and the second Tait conjecture for skein adequate link diagrams on surfaces.  The first one says that skein adequate diagrams have minimal crossing number, and the second one says that two skein adequate diagrams for the same oriented link have the same writhe. (The writhe of a link diagram $D$ is denoted by $w(D)$ and is defined to be the sum of its crossing signs.)
These results strengthen the earlier work of Adams et al., \cite{Adams}, who showed the minimal crossing number result for reduced alternating knot diagrams in surfaces. We also strengthen the minimality result of \cite{Boden-Karimi-2019}, for homologically adequate link diagrams in surfaces, and further show that any connected sum of two skein adequate link diagrams on surfaces is again skein adequate. This implies that the crossing number and writhe are essentially additive under connected sum of
skein adequate links in thickened surfaces.

For any link diagram $D$ on a surface $\Sigma$ of minimal genus, we prove that
$${\rm span}([D]_\Sigma) \leq 4c(D) + 4 |D|-4g(\Sigma),$$
where $|D|$ is the number of connected components of $D$, $c(D)$ is the number of crossings, and $g(\Sigma)$ is the genus of $\Sigma.$
This  inequality generalizes a result proved by Kauffman, Murasugi, and Thistlethwaite for link diagrams on $\R^2$ \cite{Kauffman-87, Murasugi-871, Thistlethwaite-87},  extending their
nice geometric application of the Kauffman  bracket.
 It also extends and strengthens an analogous recent result proved in \cite{Boden-Karimi-2019} using the homological Kauffman bracket.

Additionally, we prove that the above inequality is an equality if and only if $D$ is weakly alternating. Therefore, the skein bracket, together with the crossing number, distinguishes weakly alternating links. That generalizes the analogous result of Thistlethwaite for classical links.

\subsection*{Broader context and motivation}
While the results presented here are new only for links in non-contractible surfaces, generalized link theory is of growing interest and has many potential connections to classical links and 3-dimensional geometry. We take a moment to discuss some of them.

One motivation for our results is their connection to the theory of virtual knots and links, which can be viewed as links in thickened surfaces, considered up to homeomorphisms and stabilization \cite{Carter-Kamada-Saito}. By Kuperberg's theorem, minimal genus realizations of virtual links are unique up to homeomorphism \cite{Kuperberg}. Our theory of adequate and alternating links in thickened surfaces is invariant under surface homeomorphisms and, therefore, many of the results given here can be restated in the language of virtual links.

A second motivation involves potentially novel applications to classical link theory. The Turaev surface construction associates to any classical link diagram  an alternating link in a thickened surface \cite{Turaev-1987, DFKLS, CK-Turaev}.
Menasco famously proved hyperbolicity  for prime alternating (non-torus) links in $S^3$ \cite{Menasco-1984}, and his result has been extended to prime alternating links
$L \subset \Sigma \times I$ in \cite{Adams-2019a}.
This result opens the door to using the hyperbolic geometry of alternating links in higher genus surfaces to profitably study non-alternating classical links, e.g., see \cite{Adams-2019c} and the many other papers cited below.

In \cite{Dasbach-Lin-2007},
Dasbach and Lin proved a remarkable result giving a bound on the volume of alternating link complements in terms of the second and penultimate coefficients of the Jones polynomial.
In \cite{Lackenby-2004}, Lackenby established an equally remarkable bound on the volume of alternating link complements in terms of the diagrammatic \emph{twist number}
For alternating hyperbolic links in $S^3$, the results of \cite{Dasbach-Lin-2007} imply that the twist number is essentially an isotopy invariant of $L$, but this is not true in general.

These methods have been generalized to non-alternating hyperbolic links in $S^3$ \cite{Blair-2009, Blair-Allen-Rodriguez-2019}
and to hyperbolic links in arbitrary compact oriented 3-manifolds \cite{HP-2020}.
In general, there is a notion of weakly generalized alternating link diagrams on surfaces due to Howie \cite{Howie-2015}, extended to links in compact oriented 3-manifolds via ``generalized projection surfaces'' by Howie and Purcell \cite{HP-2020}.

The volume bounds have been extended to alternating links in thickened surfaces by Bavier and Kalfagianni \cite{Bavier-Kalfagianni-2020} and Will \cite{Will-2020} and also to virtual alternating links by Champanerkar and Kofman \cite{Champanerkar-Kofman-2020}.
In \cite{Champanerkar-Kofman-2020} and \cite{Will-2020}, the volume bounds are expressed in terms of the Jones-Krushkal polynomial \cite{Krushkal-2011, Boden-Karimi-2019}, and  in \cite{Bavier-Kalfagianni-2020}
they are expressed in terms of a skein invariant derived from fully contractible smoothings.
In \cite[Corollary 1.3]{Bavier-Kalfagianni-2020}, they deduce that, for certain alternating links in thickened surfaces, the twist number is an isotopy invariant. Interestingly, this result is consistent with the generalized Tait flyping conjecture.

\section{State sum formula and the generalized Jones polynomial}
\label{s-state-sum}

We will assume throughout this paper that $\Sigma$ is an oriented surface with one or more connected components, which may also have boundary. Links in $\Sigma\times I$  will be represented as diagrams on $\Sigma$ up to Reidemeister moves.

Every framed link in $\Sigma\times I$  can also be represented by a link diagram with framing given by the blackboard framing. Equivalence of framed links is given by regular isotopy, which includes the second and third Reidemeister moves, as well as the modified first Reidemeister move, which replaces $\KPXR$ or $\KPXQ$ with $\ \KPXO \ .$

Let $D$ be a link diagram on a surface $\Sigma$. Given a crossing \KPX\ of $D$, we consider its $A$-type \KPA and $B$-type \KPB resolution, as in the Kauffman bracket construction. A choice of resolution for each crossing of $D$ is called a {\bf state}. Let $\cS(D)$ denote the set of all states of $D$. Thus $|\cS(D)|=2^{c(D)},$ where $c(D)$ is the crossing number of $D$.

For $S\in \cS(D)$, let $|S|$ denote the number of loops in $S$ and $t(S)$ the number of contractible loops in $S$. Also let $\wh S$ denote $S$ with contractible loops removed. Hence, $\wh S\in \cal{MC}(\Sigma).$ The following state sum formula  is an immediate consequence of the definition, and it generalizes the usual formula for the classical Kauffman bracket:
\begin{equation}\label{e-state-sum}
[D]_\Sigma=\sum_{S\in \cS(D)}  A^{a(S)-b(S)}\delta^{t(S)}\wh S \in  \Z[A^{\pm 1}]\cal{MC}(\Sigma),
\end{equation}
where $a(S),b(S)$ are the numbers of $A$- and $B$-smoothings in $S$ and $\delta = -A^2-A^{-2}$ as before. A similar formula appears in the paper of Dye and Kauffman on the surface bracket polynomial  \cite{Dye-Kauffman-2005}.

Any invariant of framed links in $\Sigma\times I$ satisfying \eqref{e-KB} can be normalized to obtain a Jones-type polynomial invariant of oriented links. In the case of the skein bracket \eqref{e-bra}, one obtains the {\bf generalized Jones polynomial}, an invariant for oriented links in $\Sigma \times I$ given by
\begin{equation}\label{e-Jones}
J_{\Sigma}(D)=(-1)^{w(D)}t^{3w(D)/4}([D]_\Sigma)_{A=t^{-1/4}}.
\end{equation}


\section{Adequate link diagrams in surfaces}
\label{s-adeq}

Given a link diagram $D$, let $S_A$ be the pure $A$ state and let $S_B$ be the pure $B$ state. Then $S_A$ and $S_B$ are the states which theoretically give rise to the terms of maximal and minimal degree in \eqref{e-state-sum}. The notion of adequacy of a link diagram is designed to guarantee that the terms from $S_A$ and $S_B$ survive in the state sum formula. Therefore, when $D$ is a skein adequate diagram, its skein bracket $[D]_\Sigma$ has maximal possible span.

Two states $S, S'$ are said to be {\bf adjacent} if their resolutions differ at exactly one crossing.

\begin{definition} \label{defn:h-adequate}
A link diagram $D$ on a surface $\Sigma$ is said to be $A$-adequate if $t(S) \leq t(S_A )$ or $\wh{S} \neq  \wh{S}_A$ in $\cal{MC}(\Sigma)$ for any state $S$ adjacent to $S_A$. It is  said to be $B$-adequate if $t(S) \leq t(S_B)$ or $\wh{S} \neq  \wh{S}_B$ for any state $S$ adjacent to $S_B$. The diagram $D$ is called skein adequate if it is both $A$- and $B$-adequate.
\end{definition}

The notions of $A$- and $B$-adequacy are modeled on the notions of plus- and minus-adequacy for classical links \cite{Lickorish}. Recall that a classical link diagram is said to be {\bf plus-adequate} if $|S| = |S_A|-1$ for any state $S$ adjacent to $S_A$, and it is {\bf minus-adequate} if $|S| = |S_B|-1$ for any state $S$ adjacent to $S_B$. This simpler notion of adequacy extends verbatim to link diagrams on surfaces. For link diagrams on surfaces, plus- and minus-adequacy is a special case of the notion of homological adequacy, which was introduced in \cite{Boden-Karimi-2019} and will be reviewed in 
\Cref{s-homological}. We will see that adequacy as defined above is more general than simple or homological adequacy.

The following provides an alternative definition of adequacy:

\begin{proposition} \label{p-alt-adeq}
(1) A link diagram $D$ on $\Sigma$ is $A$-adequate if and only if $t(S) \leq t(S_A )$ or $|\wh{S}|\neq |\wh{S}_A|$ for any state $S$ adjacent to $S_A$.\\
(2) A link diagram $D$ on $\Sigma$ is $B$-adequate if and only if $t(S) \leq t(S_B)$ or $|\wh{S}| \neq  |\wh{S}_B|$ for any state $S$ adjacent to $S_B$.
\end{proposition}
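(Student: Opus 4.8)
The plan is to reduce both parts to a single observation about how the non-contractible part $\wh S$ of a state changes when one resolution is switched; I treat part (1), part (2) being obtained by interchanging $A$ and $B$ throughout. The only difference between the defining condition in \Cref{defn:h-adequate} and the condition in part (1) is that ``$\wh S\neq\wh S_A$ in $\cal{MC}(\Sigma)$'' is replaced by ``$|\wh S|\neq|\wh S_A|$'', and both are quantified over the same set of states $S$ adjacent to $S_A$; so it suffices to show that for every such $S$ one has $\wh S=\wh S_A$ in $\cal{MC}(\Sigma)$ if and only if $|\wh S|=|\wh S_A|$. The forward implication is trivial (isotopic multi-loops have equally many components), so the whole proposition reduces to the converse implication $|\wh S|=|\wh S_A|\ \Rightarrow\ \wh S=\wh S_A$.

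To prove this, fix the crossing $c$ at which $S$ and $S_A$ differ, and recall the standard fact that switching one resolution performs a band surgery on the underlying $1$-manifold of $S_A$: either two loops of $S_A$ merge into one loop of $S$, or one loop of $S_A$ splits into two loops of $S$, with all other loops common to both. At most three loops are affected, and I would organize the proof by which of these are contractible, bookkeeping the numbers of contractible and non-contractible loops on each side. A short computation then shows that $|\wh S|=|\wh S_A|$ forces one of two possibilities: either all affected loops are contractible, in which case $\wh S$ and $\wh S_A$ are literally the same multi-loop (the common non-contractible part); or a non-contractible loop is band-summed with a contractible one (in the merging case), respectively a non-contractible loop splits off a contractible one (in the splitting case), the third affected loop again being non-contractible.

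It remains to treat this last situation, which is where I expect the real work to be. Let $\beta$ be the contractible loop involved; it bounds a disk $\Delta\subset\Sigma$. Since the loops of a state are pairwise disjoint and any loop lying in a disk is contractible, $\operatorname{int}\Delta$ contains no non-contractible loop; moreover the surgery band at $c$ meets no loops other than the two it joins, so $\Delta$ and the band can be taken disjoint from every other non-contractible loop. Retracting the ``finger'' created by the surgery across $\Delta$ is then an ambient isotopy of $\Sigma$, supported away from those loops, that carries the surgered non-contractible loop back onto the original one; this identifies $\wh S$ with $\wh S_A$ in $\cal{MC}(\Sigma)$, completing the argument. The delicate point, to be handled carefully in the write-up, is precisely this isotopy: the two non-contractible loops must be matched not merely as individual curves but as part of the whole multi-curve $\wh S$, which is why it is essential that $\Delta$ contains no non-contractible loop in its interior.
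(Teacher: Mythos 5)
Your argument breaks down at the ``standard fact'' you invoke, namely that switching one resolution either merges two loops of $S_A$ into one loop of $S$ or splits one loop into two. That is true for diagrams in the plane or sphere, but not for diagrams on a general surface $\Sigma$: there is a third possibility, the \emph{single cycle bifurcation}, in which one loop of $S_A$ is rearranged into a single, generally non-isotopic, loop of $S$ (this is case (iii) in the paper's proof of this proposition). A concrete example is the one-crossing knot diagram on the torus whose underlying curve is the union of a $(1,0)$-curve and a $(0,1)$-curve meeting in a single crossing: its two states are a $(1,1)$-curve and a $(1,-1)$-curve. This example also refutes the statement you reduce the proposition to. You claim it suffices to prove, for \emph{every} state $S$ adjacent to $S_A$, that $|\wh{S}|=|\wh{S}_A|$ implies $\wh{S}=\wh{S}_A$ in $\cal{MC}(\Sigma)$; in the example both states consist of exactly one non-contractible loop, so $|\wh{S}|=|\wh{S}_A|=1$, yet the two loops lie in different homology classes (up to sign) and hence $\wh{S}\neq\wh{S}_A$. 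So the unrestricted implication is false, and your plan as organized cannot be completed.

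The proposition survives such transitions for a different reason, which your proof is missing: in a single cycle bifurcation neither of the two curves involved can be contractible (the paper shows this by writing their classes as $\pm([\alpha]\pm[\beta])$ for two simple loops $\alpha,\beta$ with algebraic intersection number $1$; alternatively, a small band attached to the boundary of an embedded disk in an orientable surface always splits it, so a contractible state loop can never bifurcate). Hence $t(S)=t(S_A)$ in that case, and both the condition of the definition and the condition of the proposition hold via the first disjunct $t(S)\leq t(S_A)$, with no need to compare $\wh{S}$ and $\wh{S}_A$ at all. The correct organization is therefore: the equivalence of the two conditions only has content for adjacent states with $t(S)>t(S_A)$; one first rules out the bifurcation case for such states (this is where the homological or disk argument is genuinely needed), and only then does your merge/split bookkeeping apply. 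That final part of your proposal is sound --- the observation that the disk bounded by the contractible loop contains no non-contractible loops, and the finger-retraction isotopy, correctly justify the step the paper compresses into ``therefore $\wh{S}=\wh{S}_A$'' in its case (i) --- but the omission of the bifurcation case is a real gap, not a detail.
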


\begin{proof}
We begin with some general comments. Given a link diagram $D$ and two adjacent states $S,S'$, the transition from $S$ to $S'$ is one of the following  types:
\begin{itemize}
\item[(i)] $|S'| = |S|+1$, i.e., one cycle of $S$ splits into two cycles of $S'$.
\item[(ii)] $|S'| = |S|-1$, i.e., two cycles of $S$ merge into one cycle of $S'$.
\item[(iii)] $|S'| = |S|,$ i.e., one cycle $C$ of $S$ rearranges itself into a new cycle $C'$ of $S'$.\footnote{The transition $S \to S'$ in this case is called a \textit{single cycle bifurcation.}}
\end{itemize}

In cases (ii) and (iii), either $t(S')\leq t(S)$ or  $\wh{S'} \neq  \wh{S}$.
Specifically, in case (ii),  $t(S')  > t(S)$ only when two non-trivial parallel cycles in $S$ merge to form one trivial cycle in $S'$, which implies that $\wh{S} \neq \wh{S'}$.
Likewise, in case (iii), we claim that neither $C$ nor $C'$ is trivial and, consequently, $t(S')  = t(S)$. To see that, note that if $S'$ is obtained from $S$ by a smoothing change of a crossing $x$ then
there are two simple closed loops $\alpha,\beta\subset \Sigma$ intersecting at $x$ only and such that the two different smoothings of $x$ yield $C$ and $C'$. Assigning some orientations to $\alpha$ and $\beta$, we see that $C$ and $C'$ with some orientations equal $\pm (\alpha+\beta)$ and $\pm (\alpha-\beta)$ in $H_1(\Sigma).$ Since the algebraic intersection number of $\alpha$ and $\beta$ is $1$, we know that $\alpha\ne \pm \beta$ and, consequently, neither $C$ nor $C'$ is trivial.

Therefore, to verify that a given diagram is $A$- or $B$-adequate, it is enough to check that the conditions of \Cref{defn:h-adequate} hold in case (i).

We will now prove part (1). Suppose $S$ is a state adjacent to $S_A$ with $t(S)=t(S_A)+1$. Then the transition from $S_A$ to $S$ must either be case (i) or (ii).

If it is case (i), then  $|S|=|S_A|+1$ and $t(S)=t(S_A)+1$, therefore, $\wh{S} =\wh{S}_A.$ Thus $D$ is not $A$-adequate and $|\wh{S}|=|\wh{S}_A|.$ If it is case (ii), then $|S|=|S_A|-1$, and two nontrivial cycles of $S_A$ must merge into a trivial cycle of $S$. In this case, the conditions for $A$-adequacy are satisfied and $|\wh{S}|\neq|\wh{S}_A|.$

The proof of part (2) is similar and is left to the reader.
\end{proof}

For any diagram $D$, its bracket has a unique presentation
$$[D]_\Sigma=\sum_\mu p_\mu(D)\mu \in \sS(\Sigma\times I),$$
where the sum is over all multi-loops $\mu$ in $\Sigma.$ Denote the maximal and minimal degrees (in the variable $A$) of the non-zero polynomials $p_\mu(D)$ in this expression by $d_{max}([D]_\Sigma)$ and $d_{min}([D]_\Sigma).$

\begin{proposition}\label{p-dminmax}
For any link diagram $D$ on $\Sigma$,\\
(1) $d_{max}([D]_\Sigma) \leq c(D) + 2t(S_A)$, with equality if $D$ is $A$-adequate. \\
(2) $d_{min}([D]_\Sigma) \geq -c(D) -2t(S_B),$ with equality if $D$ is $B$-adequate.
\end{proposition}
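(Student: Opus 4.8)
The plan is to analyze the state sum \eqref{e-state-sum} by grouping the contributions to $[D]_\Sigma$ according to which multi-loop $\wh S$ they land on, and tracking the maximal (resp. minimal) possible $A$-degree. For part (1), each state $S$ contributes a monomial $A^{a(S)-b(S)}\delta^{t(S)}\wh S$, and since $\delta = -A^2 - A^{-2}$ has top degree $2$, the top $A$-degree of this single term is $a(S) - b(S) + 2t(S) \leq c(D) + 2t(S)$, with the first inequality always holding since $a(S)-b(S)\le a(S)+b(S)=c(D)$. So the first order of business is to show $t(S) \le t(S_A)$ for \emph{every} state $S$ (not just those adjacent to $S_A$), which gives the crude bound $d_{max}([D]_\Sigma) \le c(D) + 2t(S_A)$ immediately; this should follow by an induction on the number of crossings where $S$ differs from $S_A$, using the case analysis (i)/(ii)/(iii) from the proof of \Cref{p-alt-adeq} to see that changing one smoothing from the $A$-type toward $S$ can only decrease or preserve $t$ along a maximal chain — more precisely, one shows that among all states, $S_A$ maximizes $a(S)-b(S)+2t(S)$, with $c(D)+2t(S_A)$ achieved only by $S_A$ itself among the ``extremal'' candidates.

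The real content is the equality claim when $D$ is $A$-adequate: one must show the leading term does not cancel. The term coming from $S_A$ itself contributes $A^{c(D)}\delta^{t(S_A)}\wh S_A$, whose top-degree part is $(-1)^{t(S_A)}A^{c(D)+2t(S_A)}\wh S_A$. I would argue that no other state can contribute to the coefficient of $\mu = \wh S_A$ in degree $c(D)+2t(S_A)$. Any competing state $S \ne S_A$ satisfies $a(S) - b(S) + 2t(S) \le c(D) + 2t(S_A)$ with equality forcing $a(S) = c(D) - k$, $b(S) = k$ for some $k \ge 1$ (so $S$ differs from $S_A$ at exactly $k$ crossings) and $t(S) = t(S_A) + k$. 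The key step is to rule this out: I would order the $k$ crossings where $S$ and $S_A$ differ and pass from $S_A$ to $S$ one crossing at a time, getting a chain $S_A = S^{(0)}, S^{(1)}, \dots, S^{(k)} = S$. Each transition changes $b$ by $+1$, so to reach $t(S) = t(S_A)+k$ every transition must \emph{increase} $t$ by exactly $1$; but an increase in $t$ at a single transition is case (i) (a split) producing one new \emph{contractible} loop, and by the argument already given in the proof of \Cref{p-alt-adeq}, a case-(i) split with $t$ increasing forces $\wh{S^{(1)}}=\wh{S_A}$ but also contradicts $A$-adequacy at the very first step — so $A$-adequacy of $D$ already kills the $k=1$ case, and then one needs to check that the intermediate states stay extremal so the obstruction propagates (i.e. $S^{(1)}$ inherits enough structure that the next transition is again a bad split). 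Alternatively, and perhaps more cleanly, one shows directly: if $D$ is $A$-adequate then for every $S$ adjacent to $S_A$ we have either $|\wh S| < |\wh S_A|$ or $\wh S \ne \wh S_A$ with $t(S)\le t(S_A)$, so the degree-$(c(D)+2t(S_A))$ coefficient of $\wh S_A$ receives \emph{only} the $S_A$ contribution, and since that coefficient is $(-1)^{t(S_A)}\ne 0$, we get $d_{max}([D]_\Sigma) = c(D) + 2t(S_A)$.

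Part (2) is entirely symmetric: replace $A \leftrightarrow A^{-1}$, $A$-type $\leftrightarrow$ $B$-type, $S_A \leftrightarrow S_B$, and top degree $\leftrightarrow$ bottom degree; the monomial from $S$ has minimal $A$-degree $a(S) - b(S) - 2t(S) \ge -c(D) - 2t(S) \ge -c(D) - 2t(S_B)$, and $B$-adequacy prevents cancellation of the $S_B$ term $(-1)^{t(S_B)}A^{-c(D)-2t(S_B)}\wh S_B$ in exactly the same way. So I would write part (1) in full and say part (2) follows by the analogous argument (mirroring the diagram, or formally applying the substitution $A\mapsto A^{-1}$ which swaps $S_A$ and $S_B$ and the roles of $d_{max}$ and $d_{min}$).

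The main obstacle I anticipate is the propagation issue in the equality proof: showing that \emph{only} the state $S_A$ contributes to the extremal coefficient of $\wh S_A$ requires more than just the adjacency condition in \Cref{defn:h-adequate} — it requires knowing that along any chain from $S_A$ to a would-be competitor $S$, every intermediate state $S^{(j)}$ also has $t(S^{(j)}) = t(S_A) + j$ maximal among states at that distance and with $\wh{S^{(j)}} = \wh S_A$, so that the adjacency hypothesis can be reapplied at each step. This is the kind of bookkeeping that has to be done carefully; the natural fix is to phrase the induction so that at step $j$ one is comparing against a ``sub-diagram'' obtained by fixing the already-changed crossings, and invoke $A$-adequacy of that configuration — or, cleaner, to use \Cref{p-alt-adeq}(1) to reduce to controlling $|\wh S|$ rather than $\wh S$ itself, since the number-of-loops bound behaves additively along chains.
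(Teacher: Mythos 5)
Your overall strategy is the paper's: bound the top-degree contribution $a(S)-b(S)+2t(S)$ of each state, then use adequacy to show the $S_A$-term in the coefficient of $\wh{S}_A$ cannot be cancelled. But two supporting claims are false as stated, and the step you yourself flag as the main obstacle is left unresolved. First, it is not true that $t(S)\le t(S_A)$ for every state $S$ (a Reidemeister-I kink already produces an adjacent state with $t(S)=t(S_A)+1$); the correct statement, to which you retreat in your ``more precisely'' clause and which is all the inequality needs, is that $a(S)-b(S)+2t(S)$ is non-increasing along any chain of smoothing changes starting at $S_A$, since each change lowers $a-b$ by $2$ and raises $t$ by at most $1$. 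Second, a transition raising $t$ by $1$ need \emph{not} be a type-(i) split: two non-trivial cycles can merge into one trivial cycle (type (ii)), so ``$t$ increases at the first step'' does not by itself force $\wh{S}^{(1)}=\wh{S}_A$ nor contradict $A$-adequacy.

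The missing idea, which also dissolves your ``propagation'' worry, is to use the endpoint condition $\wh{S}=\wh{S}_A$ quantitatively along the chain, as in the paper's \Cref{lemma-hom-ad}. If $S$ contributes to the coefficient of $\wh{S}_A$ in degree $c(D)+2t(S_A)$ and differs from $S_A$ at $k\ge 1$ crossings, then along $S_A=S^{(0)}\to\cdots\to S^{(k)}=S$ every step must raise $t$ by exactly $1$; since each step raises $|S^{(i)}|$ by at most $1$, this gives $|\wh{S}^{(i+1)}|\le|\wh{S}^{(i)}|$, and the equality $|\wh{S}^{(k)}|=|\wh{S}_A|$ forces $|\wh{S}^{(i)}|$ to be constant. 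In particular the \emph{first} step already satisfies $t(S^{(1)})=t(S_A)+1$ and $|\wh{S}^{(1)}|=|\wh{S}_A|$ (this also rules out the type-(ii) merge, which would drop $|\wh{S}^{(i)}|$ by two), contradicting $A$-adequacy by \Cref{p-alt-adeq}(1). Thus adequacy is applied only once, to the single state adjacent to $S_A$; no reapplication at intermediate states, and no appeal to adequacy of auxiliary sub-configurations, is needed or available (\Cref{defn:h-adequate} constrains only states adjacent to $S_A$). Your ``cleaner'' alternative, which controls only adjacent states, is not sufficient on its own, since states at distance $k\ge 2$ could a priori contribute to the extremal coefficient; it is precisely the chain argument above that reduces them to the adjacent case. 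With that repair, your treatment of the inequality and the symmetric argument for part (2) agree with the paper.
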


\noindent{\it Proof  of (1).} By \eqref{e-state-sum}, $[D]_\Sigma$ is given by a state sum with the term $(-1)^{t(S_A)} A^{c(D)+2t(S_A)}\wh{S}_A$ for the state $S_A.$ Now the inequality of (1) follows from the fact that every change of a smoothing in $S_A$ decreases $a(S)-b(S)$ by two and increases $t(S)$ by at most one.

The proof of equality in (1) when $D$ is $A$-adequate follows immediately from part (1) of the lemma below.

The proof of (2) is analogous, and the proof of equality in (2) when $D$ is $B$-adequate follows from part (2) of the lemma below.
\qed

\begin{lemma} \label{lemma-hom-ad}
(1) If $D$ is $A$-adequate and $S$ is a state with at least one $B$-smoothing, then either
$$ a(S)-b(S)+2t(S)< c(D) + 2t(S_A)\ \quad \text{or} \quad \wh{S}\ne \wh{S}_A.$$
(2) If $D$ is $B$-adequate and $S$ is a state with at least one $A$-smoothing, then either
$$a(S)-b(S)+2t(S) > -c(D) - 2t(S_B)\ \quad \text{or} \quad \wh{S}\ne \wh{S}_B.$$
\end{lemma}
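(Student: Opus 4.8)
\textbf{Proof plan for \Cref{lemma-hom-ad}.}
The two parts are mirror images under $A \leftrightarrow A^{-1}$ (equivalently, reflecting the diagram), so I would prove (1) in detail and remark that (2) follows by the same argument applied to the mirror. The plan is to induct on the number $b(S)$ of $B$-smoothings in $S$, reducing an arbitrary state back to $S_A$ one crossing at a time and tracking how the quantity $a(S) - b(S) + 2t(S)$ changes along the way. Concretely, given a state $S$ with $b(S) \geq 1$, pick a crossing at which $S$ has a $B$-smoothing and let $S'$ be the adjacent state obtained by switching it to an $A$-smoothing; then $a(S') - b(S') = a(S) - b(S) + 2$, while $t(S')$ and $t(S)$ differ by at most $1$ (this is the trichotomy (i)--(iii) recalled in the proof of \Cref{p-alt-adeq}). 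Hence
\[
a(S) - b(S) + 2t(S) \leq a(S') - b(S') + 2t(S') ,
\]
with strict inequality unless the switch is of type (i) with a contractible loop created (so $t(S') = t(S) + 1$), and moreover when equality holds one has $\wh{S} = \wh{S'}$.

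The base-ish case is $b(S) = 1$: here $S$ is adjacent to $S_A$, so $A$-adequacy directly gives either $t(S) \leq t(S_A)$, which yields $a(S) - b(S) + 2t(S) = c(D) - 2 + 2t(S) \leq c(D) - 2 + 2t(S_A) < c(D) + 2t(S_A)$, or $\wh{S} \neq \wh{S}_A$, which is the other alternative; either way we are done. For the inductive step with $b(S) \geq 2$, I would choose the crossing to switch more carefully: there must exist a $B$-smoothed crossing whose switch to $S'$ does \emph{not} strictly increase the conclusion, i.e., such that if equality holds in the displayed inequality and $\wh{S}$ agrees with $\wh{S}_A$ then the same is inherited by $S'$. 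The cleanest way to organize this is: if \emph{every} single $B \to A$ switch from $S$ is of type (i) creating a contractible loop with $\wh{S} = \wh{S}_A = \wh{S'}$, argue we can still conclude; otherwise pick a switch that is strict or changes $\wh{S}$, apply induction to $S'$ (which has $b(S') = b(S) - 1$, still $\geq 1$), and combine.

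Here is the point I expect to be the main obstacle: chaining the inequalities, the ``$\wh{S} \neq \wh{S}_A$'' alternative can appear at any intermediate state $S'$ and then needs to be \emph{pushed back} to $S$. That is, we must show $\wh{S'} \neq \wh{S}_A$ together with the equality $a(S)-b(S)+2t(S) = a(S')-b(S')+2t(S')$ forces $\wh{S} \neq \wh{S}_A$ as well — otherwise we would have $\wh{S} = \wh{S}_A$ and $\wh{S'} \neq \wh{S}$, contradicting that the switch $S \to S'$ was an equality-case switch (for which $\wh{S} = \wh{S'}$, as noted above). So the bookkeeping invariant to carry through the induction is the combined statement ``either the degree quantity is already strictly less than $c(D) + 2t(S_A)$, or it equals $c(D)+2t(S_A)$ and $\wh{S} = \wh{S}_A$'' — the lemma is exactly the negation-contrapositive of this. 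The only genuinely nontrivial input beyond the trichotomy is the analysis of type (i) switches creating a contractible loop: one has to check that in that situation $\wh{S} = \wh{S'}$ indeed holds and that $A$-adequacy (applied after reducing all the way down, or used as the induction anchor) still rules out the bad case. I would also remark that the inequalities in \Cref{p-dminmax} are immediate consequences: summing over all states in \eqref{e-state-sum}, the coefficient of $\wh{S}_A$ in degree $c(D) + 2t(S_A)$ receives a contribution only from $S_A$ itself (by part (1)) and from states $S$ with no $B$-smoothing, i.e., only $S_A$, so it is $(-1)^{t(S_A)} \neq 0$, giving $d_{max}([D]_\Sigma) = c(D) + 2t(S_A)$.
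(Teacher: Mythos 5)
Your overall strategy (walk from $S$ back to $S_A$ one switch at a time, using the monotonicity of $a(S)-b(S)+2t(S)$ and anchoring the argument in $A$-adequacy at the last step) is close in spirit to the paper's proof, and your base case $b(S)=1$ is correct. The gap is exactly at the point you flag as the main obstacle, and your proposed resolution of it is wrong. You claim that in an equality-case switch, i.e.\ when $a(S)-b(S)+2t(S)=a(S')-b(S')+2t(S')$ (equivalently $t(S)=t(S')+1$; note your parenthetical ``$t(S')=t(S)+1$'' has the direction reversed), one necessarily has $\wh{S}=\wh{S'}$. This is false: the transition from $S'$ to $S$ can merge two nontrivial parallel cycles of $S'$ into a single contractible cycle of $S$ --- precisely the case (ii) phenomenon pointed out in the proof of \Cref{p-alt-adeq} --- and then $t(S)=t(S')+1$ holds while $\wh{S}$ has two fewer components than $\wh{S'}$, so $\wh{S}\neq\wh{S'}$. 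In that subcase your ``push back'' step collapses: from the inductive alternative $\wh{S'}\neq\wh{S}_A$ together with $\wh{S}\neq\wh{S'}$ you cannot conclude $\wh{S}\neq\wh{S}_A$, and your fallback (``pick a switch that is strict or changes $\wh{S}$, apply induction and combine'') does not close this branch either, since $\wh{S}\neq\wh{S'}$ is not the conclusion the lemma needs, and nothing in your set-up guarantees that a better crossing is available to switch. Showing that, under the contradiction hypothesis, every available switch is of the benign split type is a global statement about the whole path down to $S_A$, not something your one-step bookkeeping provides.

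The repair is to track the \emph{number} of nontrivial cycles rather than the isotopy class. In any equality step one has $|\wh{S}|\leq|\wh{S'}|$ (the trivial count rises by one while the total number of cycles changes by at most one, so the nontrivial count cannot increase). So the correct inductive statement is: either $a(S)-b(S)+2t(S)<c(D)+2t(S_A)$, or equality holds and $|\wh{S}|<|\wh{S}_A|$; its base case follows from \Cref{p-alt-adeq}, it propagates through both kinds of equality switches (split and merge), and it implies the lemma since $|\wh{S}|<|\wh{S}_A|$ forces $\wh{S}\neq\wh{S}_A$. The paper exploits the same mechanism globally rather than inductively: assuming $\wh{S}=\wh{S}_A$ and equality of degrees, every step of a path $S_A=S_0\to\cdots\to S_k=S$ must raise $t$ by one, so $|\wh{S}_i|$ is non-increasing; since it returns to its initial value at the end it is constant, which rules out the merge subcase everywhere and makes the first step contradict $A$-adequacy. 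Your closing remark deducing the equality in \Cref{p-dminmax} from part (1) of the lemma is fine.
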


\begin{proof} We prove (1) by contradiction:
Suppose to the contrary that $S$ is a state with at least one $B$-smoothing
such that $\wh{S}=\wh{S}_A$ and
$$a(S)-b(S)+2t(S) = c(D) + 2t(S_A).$$
Clearly, $S$ can be obtained from $S_A$ by a sequence of smoothing changes from $A$ to $B$, $S_A=S_0\to S_1\to \cdots \to S_k=S$. Further, each smoothing change must increase $t(\cdot )$ by one, i.e., $t(S_{i+1})=t(S_i)+1$, for $i=0,\ldots,k-1.$ Since each smoothing change increases the number of cycles in a state by at most one, none of these smoothing changes can add a new cycle to $\wh S_i$, $i=0,\ldots,k$. Therefore, $|\wh{S}_{i+1}| \leq |\wh{S}_{i}|$ for $i=0,\ldots,k-1.$
However, since $\wh{S}=\wh{S}_A$, none of the smoothing changes can decrease $|\wh{S}_i|$ either. It follows that $\wh{S}_{i+1}=\wh{S}_i$  for $i=0,\ldots,k-1$. Thus
$|\wh{S}_{i+1}|=|\wh{S}_i|$ and
$$|S_{i+1}| =t(S_{i+1}) + |\wh{S}_{i+1}| = t(S_{i})+1 + |\wh{S}_{i}|= |S_{i}|+1,$$ for $i=0,\ldots,k-1$.
In particular, each transition  $S_i \to S_{i+1}$ is of type (i) as discussed in the proof of \Cref{p-alt-adeq}, i.e., one where a cycle of $S_i$
splits into two cycles of $S_{i+1}$.

However, since $D$ is $A$-adequate, the first smoothing change $S_A=S_0\to S_1$ has either $t(S_1)\leq t(S_A)$ or $\wh{S}_1 \neq \wh{S}_A$, which is a contradiction.

This completes the proof of the first statement. The proof of the second one is similar and is left to the reader.
\end{proof}

The next result is an immediate consequence of \Cref{p-dminmax}.
Below, ${\rm span}([D]_\Sigma)$ denotes the difference between the maximal and minimal $A$-degree of $[D]_\Sigma$.
\begin{corollary}\label{cor-adequate}
If $D$ is a link diagram on $\Sigma$, then
$${\rm span}([D]_\Sigma) \leq 2 c(D) + 2t(S_A)+2t(S_B),$$
with equality if $D$ is skein adequate.
\end{corollary}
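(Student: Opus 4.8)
The plan is to read off both inequalities directly from \Cref{p-dminmax} and then use its equality clause under the skein adequacy hypothesis. First I would recall that by definition ${\rm span}([D]_\Sigma) = d_{max}([D]_\Sigma) - d_{min}([D]_\Sigma)$, so the claimed inequality is just the sum of the inequality in \Cref{p-dminmax}(1), namely $d_{max}([D]_\Sigma) \leq c(D) + 2t(S_A)$, and the negative of the inequality in \Cref{p-dminmax}(2), namely $-d_{min}([D]_\Sigma) \leq c(D) + 2t(S_B)$. Adding these gives ${\rm span}([D]_\Sigma) \leq 2c(D) + 2t(S_A) + 2t(S_B)$, which is exactly the asserted bound.

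Next I would address the equality statement. If $D$ is skein adequate, then it is both $A$-adequate and $B$-adequate, so the equality clauses in \Cref{p-dminmax} apply: $d_{max}([D]_\Sigma) = c(D) + 2t(S_A)$ and $d_{min}([D]_\Sigma) = -c(D) - 2t(S_B)$. Subtracting the second from the first yields ${\rm span}([D]_\Sigma) = 2c(D) + 2t(S_A) + 2t(S_B)$, establishing equality in the skein adequate case. The only subtle point worth a sentence is that $d_{max}$ and $d_{min}$ are well-defined (i.e., $[D]_\Sigma \neq 0$), which is guaranteed once we know the leading term $(-1)^{t(S_A)} A^{c(D)+2t(S_A)} \wh S_A$ genuinely survives in the state sum — and this is precisely what the equality in \Cref{p-dminmax}(1) provides under $A$-adequacy.

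There is essentially no obstacle here: the corollary is a purely formal consequence of \Cref{p-dminmax}, so the proof is a two-line combination of the two parts of that proposition together with the observation that adequacy of $D$ implies both one-sided adequacy conditions. If anything merits care, it is simply making sure the inequality direction is tracked correctly when negating the bound on $d_{min}$, but this is routine.
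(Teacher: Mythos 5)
Your argument is correct and matches the paper, which simply states the corollary as an immediate consequence of \Cref{p-dminmax}: one adds the bound on $d_{max}$ to the negated bound on $d_{min}$, and under skein adequacy both equality clauses apply. Nothing further is needed.
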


The map $\Psi \colon \cal{MC}(\Sigma)\to \Z[z]$ sending $S$ to $z^{|S|}$ extends linearly to the skein module,
$$\Psi \colon \sS(\Sigma\times I)=\Z[A^{\pm 1}]\cal{MC}(\Sigma)\lto \Z[A^{\pm 1},z].$$
The composition $\Psi([D]_\Sigma)$ is called the {\bf reduced homotopy Kauffman bracket}.
Obviously,
$${\rm span}(\Psi([D]_\Sigma))\leq {\rm span}([D]_\Sigma),$$
where ${\rm span}(\ \cdot \ ) $ refers to the span in the $A$-degree.

\begin{proposition}
If $D$ is a skein adequate link diagram on $\Sigma,$ then
$${\rm span}(\Psi([D]_\Sigma))= {\rm span}([D]_\Sigma).$$
\end{proposition}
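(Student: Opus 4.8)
The plan is to show that the extreme-degree coefficients of $[D]_\Sigma$, which by \Cref{p-dminmax} come entirely from the pure states $S_A$ and $S_B$, survive after applying $\Psi$, and that no cancellation can bring the span back down. First I would record that by \Cref{p-dminmax} (and the equality clauses, which apply since $D$ is skein adequate) we have $d_{max}([D]_\Sigma)=c(D)+2t(S_A)$ and $d_{min}([D]_\Sigma)=-c(D)-2t(S_B)$, so that ${\rm span}([D]_\Sigma)=2c(D)+2t(S_A)+2t(S_B)$ by \Cref{cor-adequate}. The point is to prove the same two equalities for $\Psi([D]_\Sigma)\in\Z[A^{\pm1},z]$; the reverse inequality ${\rm span}(\Psi([D]_\Sigma))\le{\rm span}([D]_\Sigma)$ is already noted in the excerpt.

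Next I would isolate the top-degree coefficient. Applying $\Psi$ to the state sum \eqref{e-state-sum} gives $\Psi([D]_\Sigma)=\sum_{S\in\cS(D)}A^{a(S)-b(S)}\delta^{t(S)}z^{|\wh S|}$, and since $\Psi(\wh S)=z^{|\wh S|}$ and $\delta^{t(S)}z^{|\wh S|}$ contributes top $A$-degree $2t(S)$, the coefficient of $A^{c(D)+2t(S_A)}$ in $\Psi([D]_\Sigma)$ is $(-1)^{t(S_A)}z^{|\wh S_A|}$ coming from $S_A$, plus possible contributions $\pm z^{|\wh S|}$ from states $S$ with $a(S)-b(S)+2t(S)=c(D)+2t(S_A)$ and at least one $B$-smoothing. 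By \Cref{lemma-hom-ad}(1), every such state $S$ has $\wh S\ne\wh S_A$ in $\cal{MC}(\Sigma)$; since $\cal{MC}(\Sigma)$ elements with different numbers of loops are certainly distinct, I would want to further argue that these contributions cannot cancel the $(-1)^{t(S_A)}z^{|\wh S_A|}$ term. The cleanest way is to look not at the coefficient of the top power of $A$ alone but at the coefficient of $A^{c(D)+2t(S_A)}$ as a polynomial in $z$, and to consider its top-$z$-degree part, or — more robustly — to run the whole argument one level up in $\sS(\Sigma\times I)$: the coefficient of $A^{c(D)+2t(S_A)}$ in $[D]_\Sigma$ itself is $(-1)^{t(S_A)}\wh S_A$ by the equality clause of \Cref{p-dminmax}(1), i.e. it is a single basis element of $\cal{MC}(\Sigma)$ with coefficient $\pm1$. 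Then $\Psi$ of that coefficient is $\pm z^{|\wh S_A|}\ne 0$, so $A^{c(D)+2t(S_A)}$ genuinely appears in $\Psi([D]_\Sigma)$ and $d_{max}(\Psi([D]_\Sigma))=c(D)+2t(S_A)$.

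Symmetrically, using the equality clause of \Cref{p-dminmax}(2), the coefficient of $A^{-c(D)-2t(S_B)}$ in $[D]_\Sigma$ is $(-1)^{t(S_B)}\wh S_B$, a single basis element with coefficient $\pm1$, so its image under $\Psi$ is $\pm z^{|\wh S_B|}\ne 0$ and $d_{min}(\Psi([D]_\Sigma))=-c(D)-2t(S_B)$. Subtracting gives ${\rm span}(\Psi([D]_\Sigma))=2c(D)+2t(S_A)+2t(S_B)={\rm span}([D]_\Sigma)$, as desired. The main obstacle — really the only subtlety — is the one I flagged: ruling out cancellation at the extreme degrees. The resolution is to observe that the extreme $A$-coefficient of $[D]_\Sigma$ is already a $\pm1$ multiple of a single basis multi-loop (this is exactly what the equality statements in \Cref{p-dminmax} give, since in the proof there the term from $S_A$ is the unique survivor at degree $c(D)+2t(S_A)$ by \Cref{lemma-hom-ad}), so applying the $\Z[A^{\pm1}]$-linear map $\Psi$ manifestly does not kill it. Thus the whole proof is a short deduction from \Cref{p-dminmax} and \Cref{lemma-hom-ad}, with the linearity of $\Psi$ over $\Z[A^{\pm1}]$ doing the remaining work.
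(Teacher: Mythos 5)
Your reduction of everything to the claim that ``the coefficient of $A^{c(D)+2t(S_A)}$ in $[D]_\Sigma$ is the single basis element $(-1)^{t(S_A)}\wh S_A$'' is where the argument breaks. Neither the equality clause of \Cref{p-dminmax}(1) nor \Cref{lemma-hom-ad}(1) gives this. The equality clause only says $d_{max}([D]_\Sigma)=c(D)+2t(S_A)$, i.e.\ that the top coefficient is nonzero because its $\wh S_A$-component survives; and \Cref{lemma-hom-ad}(1) explicitly allows other states $S$ (with at least one $B$-smoothing) to satisfy $a(S)-b(S)+2t(S)=c(D)+2t(S_A)$, provided $\wh S\ne\wh S_A$. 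Such states contribute \emph{other} multi-loops to the coefficient of $A^{c(D)+2t(S_A)}$, so that coefficient is in general not a single basis element. This matters precisely because $\Psi$ identifies distinct multi-loops with the same number of components: a state $S$ at the top $A$-degree with $\wh S\ne\wh S_A$ but $|\wh S|=|\wh S_A|$ contributes $(-1)^{t(S)}z^{|\wh S_A|}=(-1)^{t(S_A)+b(S)}z^{|\wh S_A|}$, which can have the opposite sign and cancel the $S_A$ term. So the cancellation issue you flagged at the start is not resolved by passing to $\sS(\Sigma\times I)$; it is exactly the issue, and your stated resolution assumes a statement that is false in general (adequacy pins down only the $\wh S_A$-component of the extreme coefficient, not the whole coefficient).

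What is actually needed, and what the paper proves, is a strengthening of \Cref{lemma-hom-ad} in which $\wh S\ne\wh S_A$ is replaced by $|\wh S|\ne|\wh S_A|$: if $D$ is adequate, no state with a $B$-smoothing attains $a(S)-b(S)+2t(S)=c(D)+2t(S_A)$ with $|\wh S|=|\wh S_A|$. The paper's proof runs the same chain argument as in \Cref{lemma-hom-ad}: writing $S_A=S_0\to S_1\to\cdots\to S_k=S$, each step must have $t(S_{i+1})=t(S_i)+1$ and (since a smoothing change adds at most one cycle, and $|\wh S|=|\wh S_A|$) also $|\wh S_{i+1}|=|\wh S_i|$; then for the first step the two conditions $t(S_1)=t(S_0)+1$ and $|\wh S_1|=|\wh S_0|$ force $\wh S_1=\wh S_0$, contradicting $A$-adequacy. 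Your alternative suggestion of looking at the top-$z$-degree part of the extreme $A$-coefficient runs into the same problem: the potentially cancelling states sit at that same $z$-degree $|\wh S_A|$, so one still has to rule them out by an argument of this kind. As written, your proof has a genuine gap at this point, though the surrounding setup (state sum, degree bookkeeping, symmetry of the $B$-side) is correct.
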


\begin{proof}
Let $S$ be a state with at least one $B$-smoothing such that $|\wh{S}|=|\wh{S}_A|$ and $a(S)-b(S)+2t(S)= c(D) + 2t(S_A)$. As before, $S$ can be obtained from $S_A$ by a sequence of smoothing changes from $A$ to $B$, each smoothing change can increase $t(\cdot)$ by at most one, i.e., $S_A=S_0\to S_1\to \cdots \to S_k=S$. As in the proof of \Cref{lemma-hom-ad}, we must have $t(S_{i+1})=t(S_i)+1$. Further, since a smoothing change can increase the number of cycles in $S_i$ by at most one, we have $|\wh{S}_{i+1}|\leq |\wh{S}_{i}|$ for $i=0,\ldots,k-1.$ Now the assumption that $|\wh{S}|=|\wh{S}_A|$ then implies that
$|\wh{S}_{i+1}| = |\wh{S}_{i}|$ for $i=0,\ldots,k-1.$
However, since $D$ is adequate, for the first transition $S_A=S_0 \to S_1$, either $t(S_{1}) \neq t(S_0)+1$ or $\wh{S}_1 \neq \wh{S}_0$. But $t(S_{1}) = t(S_0)+1$ and  $|\wh{S}_1| = |\wh{S}_0|$ imply that
$\wh{S}_1 = \wh{S}_0$, which gives a contradiction.

Therefore, the term with maximum $A$-degree in $\Psi([D]_{\Sigma})$ must survive. A similar argument applies to show that the term with minimum $A$-degree survives. It follows that
$${\rm span}(\Psi([D]_\Sigma))= 2c(D)+2t(S_A)+2t(S_B)={\rm span}([D]_\Sigma).$$
\end{proof}

The next proposition shows that skein adequacy is inherited under passing to subsurfaces $\Sigma' \subset \Sigma$.

\begin{proposition} If a link diagram $D$ on a subsurface $\Sigma'$ of $\Sigma$ is $A$- or $B$-adequate in $\Sigma$ then it is $A$- or $B$-adequate (respectively) in $\Sigma'$.
\end{proposition}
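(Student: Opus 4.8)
The plan is to compare the state sums of $D$ computed in the two surfaces $\Sigma'$ and $\Sigma$, using the inclusion $\iota\colon \Sigma'\hookrightarrow \Sigma$. The key observation is that the pure $A$-state $S_A$ and pure $B$-state $S_B$ of $D$ depend only on the diagram $D$ and not on the ambient surface, and likewise the \emph{cycles} of any state $S$ of $D$ — viewed as a collection of disjoint simple closed curves drawn on $\Sigma'$ — are intrinsic to $D$. What changes when we pass from $\Sigma'$ to $\Sigma$ is only \emph{which} of those cycles are contractible: a cycle that bounds a disk in $\Sigma'$ certainly bounds a disk in $\Sigma$, so $t_{\Sigma'}(S)\le t_{\Sigma}(S)$ for every state $S$, and moreover $\widehat{S}^{\,\Sigma'}$ (the multi-loop in $\Sigma'$) maps under $\iota$ to a collection of loops in $\Sigma$ containing $\widehat{S}^{\,\Sigma}$ together with possibly some extra loops that became contractible in $\Sigma$.

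First I would make precise the relationship between the two adequacy conditions using \Cref{defn:h-adequate}. Fix a state $S$ adjacent to $S_A$; I want to show that if the $A$-adequacy inequality for $D$ in $\Sigma$ holds for $S$, then the corresponding inequality for $D$ in $\Sigma'$ holds for $S$. There are two cases according to the trichotomy (i)--(iii) from the proof of \Cref{p-alt-adeq}. In cases (ii) and (iii) the argument given there shows the adequacy condition is automatically satisfied regardless of the surface, so nothing needs to be checked. In case (i), a cycle of $S_A$ splits into two cycles of $S$; here $t_\Sigma(S)\le t_\Sigma(S_A)+1$ and similarly in $\Sigma'$. The content is then: if $t_{\Sigma'}(S) = t_{\Sigma'}(S_A)+1$ and $\widehat{S}^{\,\Sigma'}=\widehat{S_A}^{\,\Sigma'}$, I must derive that $t_{\Sigma}(S)=t_{\Sigma}(S_A)+1$ and $\widehat{S}^{\,\Sigma}=\widehat{S_A}^{\,\Sigma}$, contradicting $A$-adequacy of $D$ in $\Sigma$.

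The main obstacle is the step that says the newly-created cycle stays contractible. In case (i) with $t_{\Sigma'}(S)=t_{\Sigma'}(S_A)+1$, the split must produce one extra contractible loop in $\Sigma'$; but in $\Sigma$ it is a priori possible that the split produces a \emph{non}-contractible loop there (so $t_\Sigma$ does not go up) while $t_\Sigma(S_A)=t_\Sigma(S)$ via some compensating coincidence — I must rule this out. The key point is that the \emph{same} simple closed curve is created on $\Sigma'$ and on $\Sigma$: if it bounds a disk in $\Sigma'$, its image bounds a disk in $\Sigma$, so it is contractible in $\Sigma$ as well; hence $t_\Sigma(S)\ge t_\Sigma(S_A)+1$, forcing equality in the split case, and $\widehat{S}^{\,\Sigma}$ is obtained from $\widehat{S_A}^{\,\Sigma}$ by deleting this one extra loop, so $\widehat{S}^{\,\Sigma}=\widehat{S_A}^{\,\Sigma}$. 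This contradicts $A$-adequacy of $D$ in $\Sigma$. The $B$-adequacy case is identical with $S_A$ replaced by $S_B$ and the roles of $A$- and $B$-smoothings interchanged. I would present the whole argument for the $A$-adequate case and remark that the $B$-adequate case follows by the same reasoning.
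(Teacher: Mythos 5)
Your overall strategy is the paper's: argue contrapositively, use \Cref{p-alt-adeq} to reduce to the case where one cycle $C$ of $S_A$ splits into two cycles $C_1\cup C_2$, and exploit the fact that a curve contractible in $\Sigma'$ stays contractible in $\Sigma$. However, the decisive step is asserted rather than proved. Knowing only that the newly created loop, say $C_1$, is trivial in $\Sigma$ does \emph{not} yield $t(S,\Sigma)\ge t(S_A,\Sigma)+1$, nor $\wh{S}=\wh{S}_A$ in $\Sigma$: you must also control what happens to $C$ and to $C_2$ when you pass from $\Sigma'$ to $\Sigma$. Since a curve non-contractible in the subsurface may become contractible in the larger surface, the a priori scenario ``$C$ trivial in $\Sigma$ while $C_2$ nontrivial in $\Sigma$'' would give $t(S,\Sigma)=t(S_A,\Sigma)$, and the scenario ``$C$ nontrivial while $C_2$ trivial in $\Sigma$'' would give $t(S,\Sigma)=t(S_A,\Sigma)+2$ but $|\wh{S}|\ne|\wh{S}_A|$; in either case no contradiction with $A$-adequacy in $\Sigma$ is reached. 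This is exactly the ``compensating coincidence'' your own obstacle paragraph worries about, but your resolution addresses only the new trivial offspring, not the parent cycle. Ruling out these scenarios requires the additional observation that $C$ is a band sum of $C_1$ and $C_2$ at the crossing, so once $C_1$ is trivial in $\Sigma$, the curves $C$ and $C_2$ are freely homotopic in $\Sigma$ and hence simultaneously trivial or nontrivial -- which is precisely the dichotomy the paper's proof runs (``if $C$ is trivial in $\Sigma$ then so is $C_2$; if $C$ is nontrivial then so is $C_2$''). Relatedly, the phrase ``$\wh{S}$ in $\Sigma$ is obtained from $\wh{S}_A$ in $\Sigma$ by deleting this one extra loop'' is inaccurate: the split replaces $C$ by $C_2$, so one must either show $C$ and $C_2$ are isotopic in $\Sigma$ or, as the paper does, compare only the counts $|\wh{S}|$ via \Cref{p-alt-adeq}.

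It is worth noting that the global observation in your first paragraph could be completed into a correct argument that avoids the case analysis entirely: if $\wh{S}=\wh{S}_A$ in $\cal{MC}(\Sigma')$, an isotopy in $\Sigma'$ matching their components is in particular an isotopy in $\Sigma$, and matched components have the same contractibility status in $\Sigma$; deleting the $\Sigma$-contractible components from both sides gives $\wh{S}=\wh{S}_A$ in $\cal{MC}(\Sigma)$, and then $t(S,\Sigma)=|S|-|\wh{S}|=|S_A|+1-|\wh{S}_A|=t(S_A,\Sigma)+1$, contradicting \Cref{defn:h-adequate} in $\Sigma$. As written, though, your proposal rests on the unjustified local claim, so the gap above needs to be filled.
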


\begin{proof} In the following, let $t(S,\Sigma)$ be the value of $t(S)$ when $S$ is regarded as a state in $\Sigma$, and let $t(S,\Sigma')$ be its value when $S$ is regarded as a state in $\Sigma'$.

Suppose $D$ is not $A$-adequate in $\Sigma'$.
By \Cref{p-alt-adeq}, there exists a state $S$ adjacent to $S_A$ with $t(S,\Sigma')=t(S_A, \Sigma')+1$ and $|\wh{S}|=|\wh{S}_A|$ in $\Sigma'$. In particular, $|S|=|S_A|+1$, and the transition from $S_A$ to $S$ must involve one cycle $C$ of $S_A$ splitting into two cycles $C_1$ and $C_2$ of $S$.
At least one of $C_1, C_2$ must be trivial in $\Sigma'$, for otherwise $t(S,\Sigma') \leq t(S_A,\Sigma')$.
If say $C_1$ is trivial in $\Sigma'$, then it must also be trivial in $\Sigma,$
because $\Sigma' \subset \Sigma$ is a subsurface.

As a cycle in $\Sigma$, $C$ is either trivial or nontrivial.
If it is trivial, then $C_2$ must also be trivial in $\Sigma$, and so in fact all three of $C,C_1,C_2$ are trivial. This implies that $t(S, \Sigma) = t(S_A,\Sigma)+1$ and $|\wh{S}|=|\wh{S}_A|$ in $\Sigma,$ contradicting the assumption of $A$-adequacy of $D$.

If, on the other hand, $C$ is  nontrivial in $\Sigma,$ then $C_2$ must also be nontrivial in $\Sigma$. This again implies that $t(S, \Sigma) = t(S_A,\Sigma)+1$ and $|\wh{S}|=|\wh{S}_A|$ in $\Sigma,$ leading to the same contradiction.
Therefore, $D$ must be $A$-adequate on $\Sigma'$.

The proof of $B$-adequacy of $D$ is identical.
\end{proof}


\section{Skein and homological adequacy}
\label{s-homological}

For completeness of discussion, in this section we compare \Cref{defn:h-adequate} of skein adequacy to two legacy versions, namely simple and homological adequacy.
We will see that our notion of adequacy is broader and that the statements of \Cref{lemma-hom-ad} and \Cref{cor-adequate} are strictly stronger than the corresponding statements for simple and homological adequacy.
Henceforth, we will say a link diagram on a surface is adequate if it is skein adequate.

For any state $S\subset \Sigma,$ let us denote the ranks of the kernel and the image of
$$i_* \colon H_1(S;\Z/2)\to H_1(\Sigma;\Z/2),$$
by $k(S)$ and $r(S),$ respectively.

The {\bf homological Kauffman} {\bf bracket},
$$\la D\ra_\Sigma=\sum_{S\in \cS(D)} A^{a(S)-b(S)}\delta^{k(S)}z^{r(S)},$$
was introduced by Krushkal \cite{Krushkal-2011} and studied in \cite{Boden-Karimi-2019}.

Based on this invariant, \cite{Boden-Karimi-2019} introduced the notion of homological adequacy  for link diagrams in surfaces. A diagram $D$ on $\Sigma$ is {\bf homologically $A$-adequate} if $k(S) \leq k(S_A) $ for any state $S$ adjacent to $S_A$, and it is {\bf homologically $B$-adequate} if $k(S) \leq k(S_B) $ for any state $S$ adjacent to $S_B$. A diagram $D$ is {\bf homologically adequate} if it is both homologically $A$- and $B$-adequate.

It is not difficult to show that a diagram that is plus-adequate is homologically $A$-adequate, and one that is minus-adequate is homologically $B$-adequate. (For further details, see \S 2.2 of \cite{Boden-Karimi-2019}.)

\begin{proposition}\label{p-class-homol-homot}
Every homologically $A$-adequate link diagram is $A$-adequate and every homologically $B$-adequate link diagram is $B$-adequate.
\end{proposition}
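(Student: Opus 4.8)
The plan is to prove the contrapositive: assume $D$ is \emph{not} $A$-adequate and deduce that $D$ is not homologically $A$-adequate (the $B$-adequate case being identical after replacing $S_A$ by $S_B$). By \Cref{p-alt-adeq}, failure of $A$-adequacy gives a state $S$ adjacent to $S_A$ with $t(S) = t(S_A)+1$ and $\wh S = \wh S_A$; as in the discussion in the proof of \Cref{p-alt-adeq}, this forces the transition $S_A \to S$ to be of type (i), namely a single cycle $C$ of $S_A$ splits into two cycles $C_1, C_2$ of $S$, with $|S| = |S_A|+1$. Since $t(S) = t(S_A)+1$ while $\wh S = \wh S_A$, exactly one of $C_1,C_2$ is contractible and the other — say $C_2$ — is isotopic in $\Sigma$ to $C$ (the non-contractible cycle that was split), and all other cycles of $S$ and $S_A$ agree.

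The core of the argument is then a homology computation comparing $k(S)$ with $k(S_A)$, where $k(\cdot)$ is the $\Z/2$-kernel rank defined just above \Cref{p-class-homol-homot}. The two states $S$ and $S_A$ differ only in that $S$ has the extra contractible loop $C_1$ together with a second parallel copy $C_2$ of the curve $C$, in place of the single copy $C$ in $S_A$. First I would observe that adding a contractible loop contributes a new generator to $H_1(S;\Z/2)$ that maps to $0$ in $H_1(\Sigma;\Z/2)$, hence raises $k$ by one. Then I would check that replacing one copy of $C$ by two parallel copies $C$, $C_2$ — both of which carry the same class in $H_1(\Sigma;\Z/2)$ — introduces a relation in the image and therefore also raises the kernel rank by one relative to what a single copy would give. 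Combining these, $k(S) \ge k(S_A) + 1 > k(S_A)$ (one can in fact track the exact count via $\operatorname{rank} H_1(S;\Z/2) = k(S) + r(S)$ and the change $|S| = |S_A|+1$), so $D$ is not homologically $A$-adequate, as desired.

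The main obstacle I anticipate is making the second homological comparison fully precise: one must argue that the \emph{only} effect on $i_*\colon H_1(S;\Z/2)\to H_1(\Sigma;\Z/2)$ of going from $S_A$ to $S$ is the addition of the contractible class $[C_1]$ and the duplication of the class $[C]=[C_2]$, with no unexpected cancellation — i.e. that $r(S) = r(S_A)$ and $k(S) = k(S_A)+2$, or at worst $k(S)\ge k(S_A)+1$. This requires care because $H_1(S;\Z/2)$ sits inside the larger free $\Z/2$-module generated by \emph{all} loops of $S$, and one needs that the classes of the unchanged loops, the class $[C_1]=0$, and the class $[C_2]=[C]$ interact in exactly the expected way; the cleanest route is to use the basis of $H_1(S;\Z/2)$ given by its loops, note the linear relations among their images are precisely those for $S_A$ together with $[C_1]=0$ and $[C_2]=[C]$, and count ranks. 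It may streamline the exposition to reduce directly to the case where $S_A$ itself consists only of the relevant cycles, since all other loops are common to $S$ and $S_A$ and play no role.
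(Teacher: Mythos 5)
Your overall route is the same as the paper's: argue the contrapositive, use \Cref{p-alt-adeq} to reduce the failure of $A$-adequacy to a type (i) transition $S_A \to S$ in which one cycle $C$ splits into $C_1, C_2$, with $|S|=|S_A|+1$, $t(S)=t(S_A)+1$ and $\wh{S}=\wh{S}_A$, and then show that the kernel rank jumps, contradicting homological $A$-adequacy. (The paper simply asserts $k(S)=k(S_A)+1$ at this point; a rank count is indeed the way to justify it.) However, two of your assertions are incorrect and need repair. First, your case analysis is incomplete: from $t(S)=t(S_A)+1$ and $\wh{S}=\wh{S}_A$ you conclude that $C$ is non-contractible, that exactly one of $C_1,C_2$ is contractible, and that the other is parallel to $C$. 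But the split cycle $C$ may itself be contractible, in which case \emph{both} $C_1$ and $C_2$ are contractible; this possibility also satisfies $t(S)=t(S_A)+1$ and $\wh{S}=\wh{S}_A$, and it is the typical one (e.g.\ whenever $\Sigma=S^2$, where every loop is contractible). Fortunately, the only homological input you actually need --- that at least one offspring, say $C_1$, is contractible, so $[C_1]=0$ and $[C_2]=[C_1]+[C_2]=[C]$ in $H_1(\Sigma;\Z/2)$ --- holds in both cases, so the rank count sketched in your last paragraph goes through once it is stated in this generality (no isotopy between $C_2$ and $C$ is needed, only equality of $\Z/2$ homology classes).

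Second, the count $k(S)=k(S_A)+2$ is impossible, and the ``two separate increments'' reasoning double counts a single phenomenon: since $H_1(S;\Z/2)$ is free on the loops of $S$, one has $k(S)+r(S)=|S|=|S_A|+1=k(S_A)+r(S_A)+1$, and in any type (i) transition $r(S)\ge r(S_A)$ because $[C]=[C_1]+[C_2]$ lies in the image computed from $S$; hence $k(S)\le k(S_A)+1$ always. The correct statement is exactly the paper's: $[C_1]=0$ and $[C_2]=[C]$ give $r(S)=r(S_A)$, whence $k(S)=k(S_A)+1>k(S_A)$, which is all that is needed to contradict homological $A$-adequacy. Since you hedge to ``at worst $k(S)\ge k(S_A)+1$,'' your conclusion survives, but both points should be corrected in the final write-up.
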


\begin{proof}
Recall from the discussion at the beginning of the proof of \Cref{p-alt-adeq} that there are the three possible cases, and  to verify that a given diagram is $A$- or $B$-adequate, it is enough to check that the conditions of \Cref{defn:h-adequate} hold in case (i).
Hence, it is enough to focus on states $S$ adjacent to $S_A$ or $S_B$ with $|S| = |S_A|+1$ or $|S| = |S_B|+1$, respectively.

If $D$ is not $A$-adequate, then there exists a state $S$ adjacent to $S_A$ with $|S| = |S_A|+1$, $t(S)=t(S_A)+1$,  and $\wh{S} = \wh{S}_A$. (Notice that if $|S| = |S_A|+1$ and $t(S)=t(S_A)+1$, then $\wh{S} = \wh{S}_A$ automatically holds.)
In this case, we have $k(S) =k(S_A)+1,$ and it follows that $D$ is not homologically $A$-adequate. This proves the first statement in the proposition, and the proof of the second statement on $B$-adequacy is similar.
\end{proof}

In summary then, for a link diagram $D$ on a surface $\Sigma$, it follows that
\begin{equation} \label{e-adeq-comp}
\text{plus-adequacy} \Longrightarrow \text{homological $A$-adequacy} \Longrightarrow \text{$A$-adequacy},
\end{equation}
with similar statements relating minus-adequacy, homological $B$-adequacy, and $B$-adequacy.

\begin{figure}[ht!]
\centering\includegraphics[scale = 0.8]{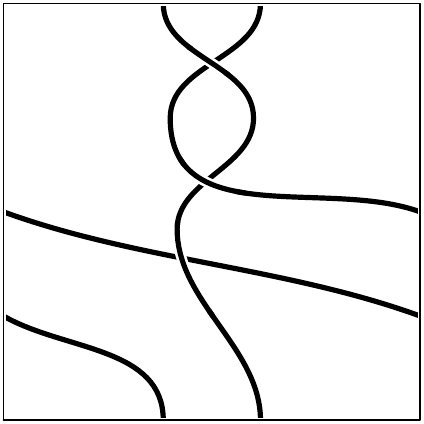}
\caption{An alternating diagram on the torus.} \label{f-3.7}
\end{figure}

In \Cref{ex-7knot}, we will see a knot diagram in a genus two surface which is adequate but not homologically adequate.
On the other hand, it is easy to construct examples which are homologically adequate but not simply adequate. For instance, consider the
alternating diagram $D$ with three crossings on the torus in \Cref{f-3.7}.
A straightforward calculation shows that it is homologically adequate but not simply adequate.
These examples show that none of the reverse implications in \eqref{e-adeq-comp} hold, therefore, the notion of adequacy in \Cref{defn:h-adequate} is strictly more general than either homological or simple adequacy.

In general, notice that
$${\rm span}(\la D\ra_\Sigma)\leq {\rm span}([D]_\Sigma) \leq 2 c(D) + 2t(S_A)+2t(S_B)\leq
2 c(D) + 2k(S_A)+2k(S_B),
$$
where ${\rm span}( \ \cdot \ )$ is the span in the $A$-degree. Therefore, \Cref{cor-adequate} immediately implies an analogous inequality holds for
homological adequacy, cf., \cite[Corollary 2.7]{Boden-Karimi-2019}.


\section{Alternating links and the Tait conjectures}
\label{s-Tait}

When tabulating knots, Tait formulated three conjectures on alternating links. The first one
states that any reduced alternating diagram of a classical link has minimal crossing number. The second one asserts that any two such diagrams representing the same link have the same writhe.
The third one states that any two reduced alternating diagrams of the same link are related by flype moves.
The first two conjectures were resolved almost 100 years later, independently by Kauffman, Murasugi, and Thistlethwaite, using the newly discovered Jones polynomial, \cite{Kauffman-87, Murasugi-871, Thistlethwaite-87}.
The third conjecture was established shortly after by Menasco and Thistlethwaite \cite{Menasco-Thistlethwaite-93}. The first two Tait conjectures actually hold more generally for adequate links \cite{Lickorish-Thistlethwaite}, and
their proofs have been generalized to homologically adequate links in thickened surfaces in \cite{Boden-Karimi-2019}.
Here, we generalize these results even further to adequate links in thickened surfaces.

Henceforth, all links in thickened surfaces will be unframed, unless stated otherwise.
Given an oriented link diagram $D$, let $c_{+}(D)$  be the numbers of crossings of type \KPXP,
and let $c_{-}(D)$ be the number of crossings of type \KPXN. The proof of the following theorem can be found in \Cref{s-proof-adequate-min}.

\begin{theorem} \label{t-adequate-min-cross}
Let $D$ and $E$ be oriented link diagrams on $\Sigma$ representing the same oriented unframed link in $\Sigma \times I$.
\begin{itemize}
\item[(i)] If $D$ is $A$-adequate, then $c_-(D)\leq c_-(E)$.
\item[(ii)] If $D$ is $B$-adequate, then $c_+(D)\leq c_+(E)$.
\end{itemize}
\end{theorem}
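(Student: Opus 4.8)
The plan is to reduce the inequality $c_-(D) \le c_-(E)$ to a statement about the $A$-degree of the skein bracket, using the fact that the skein bracket (suitably normalized) is an invariant of the underlying unframed oriented link. First I would recall the behavior of the skein bracket under the first Reidemeister move: adding a positive kink multiplies $[\,\cdot\,]_\Sigma$ by $-A^3$ and adding a negative kink multiplies it by $-A^{-3}$. Hence for the framed bracket, $[D]_\Sigma$ itself is not a link invariant, but the combination appearing in $J_\Sigma(D)$ of \eqref{e-Jones} is. Concretely, I would work with the quantity $d_{max}([D]_\Sigma) + 3 w(D)$, which one checks is invariant under all three Reidemeister moves and hence depends only on the unframed oriented link represented by $D$; the same holds for $d_{max}([E]_\Sigma) + 3 w(E)$, and these two numbers are equal. (One can also phrase this directly in terms of the maximal $t^{-1}$-degree of $J_\Sigma$.)

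Next I would feed in the adequacy estimates. Since $D$ is $A$-adequate, \Cref{p-dminmax}(1) gives the exact value $d_{max}([D]_\Sigma) = c(D) + 2t(S_A(D))$. For $E$, which need not be adequate, \Cref{p-dminmax}(1) gives only the inequality $d_{max}([E]_\Sigma) \le c(E) + 2t(S_A(E))$. Combining with the invariance of the shifted maximal degree:
\begin{equation*}
c(D) + 2t(S_A(D)) + 3w(D) \;=\; d_{max}([E]_\Sigma) + 3w(E) \;\le\; c(E) + 2t(S_A(E)) + 3w(E).
\end{equation*}
Writing $c = c_+ + c_-$ and $w = c_+ - c_-$ for both diagrams, the left side becomes $4c_+(D) - 2c_-(D) + 2t(S_A(D))$ and the right side $4c_+(E) - 2c_-(E) + 2t(S_A(E))$. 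This is not yet $c_-(D) \le c_-(E)$, so something more is needed: I must control the difference $t(S_A(D)) - t(S_A(E))$, or better, symmetrize.

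The right move, I expect, is to run the companion argument simultaneously with the $B$-side, i.e. to also use the minimal-degree invariant $d_{min}([\,\cdot\,]_\Sigma) + 3w(\cdot)$ — wait, rather $d_{min} + 3w$ is likewise an unframed invariant, and $B$-adequacy of $D$ pins down $d_{min}([D]_\Sigma) = -c(D) - 2t(S_B(D))$. But here $D$ is only assumed $A$-adequate, not $B$-adequate, so I cannot invoke that. Instead, the cleanest route is surely the one used classically by Lickorish–Thistlethwaite and adapted in \cite{Boden-Karimi-2019}: rather than comparing $D$ with $E$ directly, compare $E$ with $D$ via the span, and use that the all-$A$ state $S_A$ and the diagram $E$ interact through the crossing-sign decomposition. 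Concretely, the identity to exploit is that for \emph{any} diagram $F$, the pure-$A$ state and writhe satisfy $c(F) + 2t(S_A(F)) + 3w(F) \ge d_{max}(\langle F\rangle)$ with the correct normalization, while $-c(F) - 2t(S_B(F)) + 3w(F) \le d_{min}$; subtracting, $\mathrm{span} \le 2c(F) + 2t(S_A) + 2t(S_B)$, which is \Cref{cor-adequate}. I would apply the upper bound on $d_{max} + 3w$ at $E$ (no adequacy needed) and the exact value at $D$ (using only $A$-adequacy), and then absorb the $t(S_A)$ terms by a separate combinatorial observation: the number of contractible loops in the pure-$A$ state, together with the writhe, is controlled purely by $c_-$ up to terms bounded by $c_+$, once one keeps careful track of how an $A$-smoothing at a positive versus negative crossing contributes. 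The main obstacle, and the step I would allocate the most care to, is precisely this last bookkeeping: isolating $c_-(D)$ on one side requires pairing the $t(S_A)$ count with the writhe in exactly the right linear combination so that the $c_+$ and $t$ contributions cancel, and verifying that the resulting inequality is insensitive to the non-adequacy of $E$. I expect this to mirror the argument in \cite[\S 5]{Boden-Karimi-2019} almost verbatim, with $k(S)$ replaced by $t(S)$ and homological adequacy replaced by skein adequacy, so the real content is checking that the substitution goes through given \Cref{p-dminmax} and \Cref{lemma-hom-ad}.
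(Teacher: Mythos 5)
Your first step is on the right track but contains a sign slip: since adding a positive kink multiplies $[\,\cdot\,]_\Sigma$ by $-A^{3}$ and raises the writhe by $1$, the Reidemeister-one-invariant combination is $d_{max}([\,\cdot\,]_\Sigma)-3w(\cdot)$, not $d_{max}+3w$ (compare the normalization \eqref{e-Jones}). More importantly, the place where you stall is exactly where the real content of the theorem lies, and the fix you gesture at does not exist. After combining the invariance of the shifted maximal degree with \Cref{p-dminmax} you are left with an inequality involving $t(S_A(D))-t(S_A(E))$ together with crossing-sign terms, and there is no ``combinatorial observation'' pairing $t(S_A)$ with the writhe that cancels them: $t(S_A(E))$ is simply not controlled by $c_\pm(E)$ in any linear combination, so the direct comparison of degrees of $[D]_\Sigma$ and $[E]_\Sigma$ cannot isolate $c_-$.

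The paper's proof (following Stong, as in \cite{Stong-1994} and adapted in \cite{Boden-Karimi-2019}) removes this obstruction by a cabling argument, and this is the idea missing from your proposal. One first adds positive kinks \emph{component by component} so that the corresponding components of the two diagrams have equal writhes (this, together with equality of linking numbers, makes the modified diagrams $D'$, $E'$ represent the same \emph{framed} link, so their brackets are literally equal — no $\pm 3w$ correction is needed); one checks that $D'$ remains $A$-adequate. Then one passes to the $r$-th parallels $(D')^r$, $(E')^r$, after proving the lemma that $A$-adequacy is inherited by parallels. For the parallels, $d_{max}$ is exactly $\bigl(c(D)+\sum\mu_i\bigr)r^2+2\bigl(t(S_A(D))+\sum\mu_i\bigr)r$ by adequacy, and is bounded above by the analogous expression for $E'$; since this holds for all $r$, comparing the $r^2$-coefficients eliminates the $t(S_A)$ terms entirely and yields $c(D)+\sum\mu_i\le c(E)+\sum\nu_i$, from which $c(D)-w(D)\le c(E)-w(E)$, i.e.\ $c_-(D)\le c_-(E)$, follows by subtracting the matched writhes and linking numbers. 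Without the parallel trick (or some substitute achieving the same separation of the crossing count from the contractible-loop count), your outline does not close, so as written the proposal has a genuine gap at its central step.
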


The {\bf crossing number} of a link $L\subset \Sigma\times I$, $c(L)$, is defined as the minimal crossing number among all diagram representatives of $L$.
A link $L\subset \Sigma\times I$ is said to be {\bf adequate} if it admits an adequate diagram on $\Sigma.$

Using \Cref{t-adequate-min-cross}, one can deduce the first and second Tait conjectures for adequate links in surfaces.
\begin{corollary}\label{c-Tait}
(i) Any adequate diagram of
a link in $\Sigma \times I$ has $c(L)$ crossings.\\
(ii) Any two adequate diagrams of the same oriented link in $\Sigma \times I$ have the same writhe.
\end{corollary}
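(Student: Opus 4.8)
The plan is to deduce both statements directly from \Cref{t-adequate-min-cross} by a short counting argument, using the elementary identities $c(D)=c_+(D)+c_-(D)$ and $w(D)=c_+(D)-c_-(D)$ valid for any oriented link diagram $D$.

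For part (i), let $D$ be an adequate diagram of $L\subset\Sigma\times I$ and let $E$ be any diagram on $\Sigma$ representing the same unframed link. First I would note that being $A$- or $B$-adequate is a property of the underlying unoriented diagram, so we may fix an orientation of $L$ and regard $D$ and $E$ as oriented diagrams of one and the same oriented link. Since $D$ is both $A$- and $B$-adequate, \Cref{t-adequate-min-cross} gives $c_-(D)\le c_-(E)$ and $c_+(D)\le c_+(E)$; adding these yields $c(D)\le c(E)$. As $E$ was an arbitrary diagram of $L$, this shows $c(D)=c(L)$.

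For part (ii), let $D$ and $E$ both be adequate diagrams of the same oriented link in $\Sigma\times I$. Applying \Cref{t-adequate-min-cross}(i) with $D$ in the role of the $A$-adequate diagram gives $c_-(D)\le c_-(E)$, and applying it with $E$ in that role gives $c_-(E)\le c_-(D)$, so $c_-(D)=c_-(E)$; symmetrically, part (ii) of the theorem applied in both directions gives $c_+(D)=c_+(E)$. Hence $w(D)=c_+(D)-c_-(D)=c_+(E)-c_-(E)=w(E)$.

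I do not expect any real obstacle here: all the content lives in \Cref{t-adequate-min-cross}, and what remains is bookkeeping. The only points that merit a word of care are the interaction between orientations and adequacy — adequacy is defined for unoriented diagrams whereas $c_\pm$ and $w$ require an orientation, so $D$ and $E$ must be oriented compatibly before the theorem is invoked — and, in part (i), the observation that the comparison is against \emph{all} diagrams of $L$ (not merely adequate ones), which is exactly the generality in which \Cref{t-adequate-min-cross} is stated.
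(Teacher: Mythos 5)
Your proposal is correct and follows exactly the paper's own route: both parts are read off from \Cref{t-adequate-min-cross}, with (i) obtained by adding the inequalities $c_-(D)\le c_-(E)$ and $c_+(D)\le c_+(E)$, and (ii) by applying the theorem symmetrically to get $c_\pm(D)=c_\pm(E)$ and hence $w(D)=w(E)$. Your remarks on fixing a common orientation and on comparing against all (not just adequate) diagrams are the same bookkeeping points implicit in the paper's short proof.
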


\begin{proof}
Statements (i) and (ii) are immediate consequences of \Cref{t-adequate-min-cross}.
In the case of (ii), if adequate diagrams $D$ and $E$ represent the same oriented link, then
$c_+(D)=c_+(E)$ and $c_-(D)= c_-(E)$ by the above theorem and, hence,
$$w(D)=c_+(D)-c_-(D)=c_+(E)-c_-(E)=w(E). \qedhere$$
\end{proof}

\Cref{c-Tait} implies that
for an adequate link $L\subset \Sigma\times I$,
the writhe is a well-defined invariant of its oriented link type.

Let $g(\Sigma)$ be the sum of the genera of the connected components of $\Sigma.$
A link diagram $D$ on $\Sigma$ is {\bf minimally embedded} if it does not lie on a subsurface of $\Sigma$ of smaller genus. In other words, the complement of $D$ on $\Sigma$ has no non-separating loops. Let $N_D$ be a neighborhood of $D$ in $\Sigma$ small enough so that it is a ribbon surface
retractible onto $D$. A diagram $D$ is minimally embedded if and only if $g(N_D)=g(\Sigma).$

Furthermore, note that if $D$ is connected and $\Sigma$ is closed, then $D$ is minimally embedded if and only if $\Sigma\sm D$ is composed of disks. In that case, we say that $D$ is {\bf cellularly embedded}.

A link diagram $D$ on a closed surface $\Sigma$ is said to have {\bf minimal genus} if it is minimally embedded within its isotopy class.

In \cite{Manturov-2013}, it is proved that any cellularly embedded knot diagram with minimal crossing number has minimal genus. This result was recently extended to link diagrams, and the following is a restatement of Theorem 1 of \cite{BR-2021}.

\begin{theorem} \label{thm-min}
Any cellularly embedded link diagram with minimal crossing number has minimal genus.
\end{theorem}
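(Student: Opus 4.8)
The plan is to reformulate the statement combinatorially and then argue by contradiction, isolating a minimality principle for framed four-valent graphs as the essential input. The statement concerns the isotopy class of $D$ in $\Sigma\times I$, so the first step is the reformulation. Since $D$ is cellularly embedded, $\Sigma$ is reconstructed from the underlying four-valent graph of $D$ with the rotation system it inherits; counting $c(D)$ vertices, $2c(D)$ edges and $f(D)$ disk faces gives $2-2g(\Sigma)=\chi(\Sigma)=f(D)-c(D)$ (summing over components in general). For an arbitrary diagram $D'$ in $\Sigma\times I$ one has $\chi(N_{D'})=-c(D')$, hence $g(N_{D'})=1+\tfrac12\bigl(c(D')-b(D')\bigr)$, where $b(D')$ is the number of boundary circles of the ribbon surface $N_{D'}$ (with the evident modification when the underlying graph is disconnected); moreover $N_{D'}$ is the smallest-genus subsurface of $\Sigma$ carrying $D'$. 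Thus ``$D$ has minimal genus'' means precisely that $g(N_{D'})\ge g(\Sigma)$ for every $D'$ obtained from $D$ by Reidemeister moves on $\Sigma$, and since $D$ is cellular this reads $g(N_{D'})\ge g(N_D)$: \emph{$D$ realises the minimum of the ribbon genus over its isotopy class}. The goal is therefore to show that, when $D$ is cellular and $c(D)=c(L)$, no finite sequence of Reidemeister moves starting at $D$ produces a diagram whose complement in $\Sigma$ contains a non-separating simple closed curve.

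I would argue by contradiction. Suppose $D$ is cellular, $c(D)=c(L)$, yet some $D'\sim D$ has $g(N_{D'})<g(\Sigma)$; relabelling, take $D=D_0\to D_1\to\cdots\to D_n=D'$ a sequence of Reidemeister moves on $\Sigma$ along which the ribbon genus first drops below $g(\Sigma)$. To control this I would pass to the combinatorial model of framed four-valent graphs with rotation system (the atoms of \cite{Manturov-2013} in the checkerboard-colourable case), in which the ribbon genus is a function of the graph together with its rotation system, the Reidemeister moves become the standard first/second/third moves, and each changes the vertex count (the crossing number) by $0$, $\pm 2$, $0$ respectively. The key input is the minimality principle behind Manturov's theorem for knots, extended to link diagrams in \cite{BR-2021}: a framed four-valent graph arising from a diagram of minimal crossing number admits no vertex-non-increasing simplification, and such an \emph{irreducible} graph realises the minimum of the ribbon genus over its isotopy class. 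Applied to $D$ this contradicts the existence of $D'$, so $D$ has minimal genus, and once $D$ is known to be minimally embedded the cell count of the first paragraph pins down $g(\Sigma)$.

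The hard part will be establishing that minimality principle, which is genuinely delicate because the ribbon genus interacts with Reidemeister moves non-uniformly: a second Reidemeister move creating a bigon may either leave the ribbon genus unchanged (adding two boundary circles) or increase it by one, and a third Reidemeister move, though it fixes the crossing count, can still raise or lower the ribbon genus by one at the expense of two boundary circles. Consequently no naive linear combination of $c(D')$, $g(N_{D'})$ and $b(N_{D'})$ is monotone along the sequence, and one is forced into a finer invariant — a parity-weighted crossing count built from the $\Z/2$-intersection form of the chord system of the underlying graph, in the spirit of Manturov's parity calculus — that does not increase under the third move or the genus-preserving second moves and strictly decreases whenever a genuine handle is cancelled. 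Checking the behaviour of this invariant under the third Reidemeister move is the awkward case, and organising the bookkeeping when the underlying graph is disconnected (so that ``genus'' is a sum over components and a superfluous handle may be hidden in any one of them) is the extra content of the link case over \cite{Manturov-2013}. A lower-tech alternative would be to surger $D'$ directly along a non-separating curve meeting it minimally and bound the resulting change in crossing number, but making that estimate effective appears to require the same parity input.
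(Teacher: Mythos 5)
There is a genuine gap, and it sits exactly where you flag it yourself. Your first paragraph is a sound reformulation (minimal genus of a cellular diagram $=$ the diagram realises the minimum of the ribbon genus $g(N_{D'})$ over its Reidemeister class), but the second paragraph then discharges the whole problem onto a ``minimality principle'' that you attribute to \cite{Manturov-2013} and \cite{BR-2021}. That principle, as you state it (``a framed four-valent graph arising from a diagram of minimal crossing number \dots\ realises the minimum of the ribbon genus over its isotopy class''), \emph{is} the theorem to be proved -- indeed the paper states \Cref{thm-min} precisely as a restatement of Theorem 1 of \cite{BR-2021}, so invoking that result is circular in a blind proof attempt. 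Your third paragraph then concedes that establishing the principle is ``the hard part'' and offers only a hoped-for parity-weighted crossing count whose definition, and whose monotonicity under the second and third Reidemeister moves, are never given; since you yourself observe that no naive combination of $c$, $g(N_{D'})$ and the boundary count is monotone along a Reidemeister sequence, the existence of such an invariant is exactly the content of Manturov's parity argument and cannot be taken on faith. (A small but symptomatic slip: for unframed isotopy the first Reidemeister move changes the crossing number by $\pm 1$, not $0$; the $\pm 2$ count belongs to the framed/modified first move, which is not the equivalence relevant to crossing number here.)

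For comparison: the paper does not prove this statement either -- it quotes \cite{BR-2021} -- but it does give, in the remark following \Cref{cor-th}, an independent argument in the special case of non-split alternating links: \Cref{cor-th} (a consequence of the span computation for the skein bracket) forces a minimal-crossing cellular diagram of such a link to be alternating, and Proposition 6 of \cite{BK-2020} then gives minimal genus. If you want a route that stays inside the paper's toolkit, that is the available one, but it only covers the alternating case; for the general statement the parity-projection machinery of \cite{Manturov-2013, BR-2021} has to be carried out in full, not sketched.
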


A link diagram $D$ on $\Sigma$ is {\bf alternating} if, when traveling along any of its components, its crossings alternate between over and under.
A link $L\subset \Sigma\times I$ is {\bf alternating} if it can be represented by an alternating link diagram.

A crossing $x$ of $D$ is {\bf nugatory} if there is a simple loop in $\Sigma$
which separates $\Sigma$ and intersects $D$ only at $x.$

As observed in \cite{Boden-Karimi-2019}, although nugatory crossings in diagrams in $\Sigma=\R^2$ can always be removed by rotating one side of the diagram $180^\circ$ relative to the other, that is not always true for diagrams in non-contractible surfaces $\Sigma$, see \Cref{f-nugatory}.
A nugatory crossing is said to be {\bf removable} if the simple loop can be chosen to bound a disk, otherwise it is called {\bf essential}. A link diagram is {\bf reduced} if it does not contain any removable nugatory crossings.
For example, the knot in \Cref{f-7-crossing} contains an essential nugatory crossing.

\begin{figure}[!ht]
\centering\includegraphics[height=30mm]{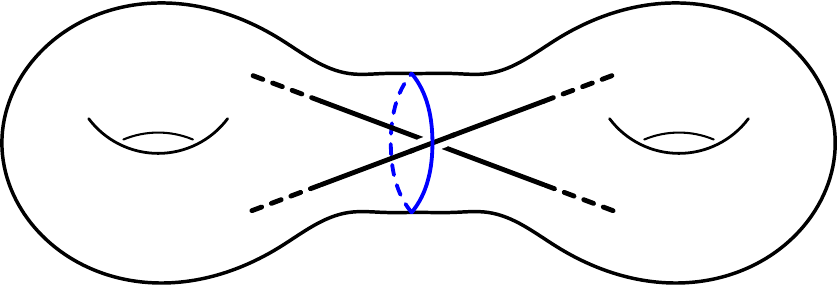}
\caption{An essential nugatory crossing.} \label{f-nugatory}
\end{figure}

The following strengthens Proposition 2.8 of \cite{Boden-Karimi-2019}.
Its proof is given in \Cref{s-proof-alt-adeq}.

\begin{theorem}\label{t-alt-adeq}
Any reduced alternating diagram is adequate.
\end{theorem}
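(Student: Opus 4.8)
The plan is to show that a reduced alternating diagram $D$ on $\Sigma$ is both $A$- and $B$-adequate by analyzing the pure states $S_A$ and $S_B$ directly, using the characterization in \Cref{p-alt-adeq}. The key structural fact I would exploit is the classical checkerboard coloring: an alternating diagram admits a two-coloring of the complementary regions of $D$ in $N_D$ (the ribbon neighborhood) such that at every crossing the two colors are arranged consistently with the alternating pattern. The pure state $S_A$ is then obtained by the ``all-$A$'' smoothing, and each loop of $S_A$ is the boundary of one color class of regions (say the white regions), while $S_B$ traces the boundaries of the black regions; each crossing of $D$ corresponds to an edge in the associated Tait graph $G_A$ (whose vertices are the white regions), and the two smoothings $S_A$, $S_B$ at that crossing correspond to the two sides of that edge. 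This is the local picture that makes everything computable.

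The steps I would carry out are: (1) Fix the checkerboard coloring on $N_D$ and set up the Tait graph $G_A$ with vertex set the loops of $S_A$ and edge set the crossings of $D$. By \Cref{p-alt-adeq}(1), to prove $A$-adequacy it suffices to show that for every crossing $x$, changing the $A$-smoothing at $x$ to a $B$-smoothing either strictly drops $t(\cdot)$ or drops $|\wh{S}_A|$ — equivalently, by the trichotomy in the proof of \Cref{p-alt-adeq}, it suffices to rule out the ``bad'' type-(i) split in which one loop of $S_A$ splits into two and a new \emph{contractible} loop is created while $\wh S$ is unchanged. (2) Show the split at $x$ merges or splits loops of $S_A$ exactly according to whether the edge $x$ is a loop-edge of $G_A$ or not: if $x$ is not a loop-edge, its two endpoints are distinct loops of $S_A$ and the smoothing change \emph{merges} them (case (ii)), and one checks directly from the local picture that this merge produces a nontrivial loop (or else changes $\wh S$), so the condition of \Cref{defn:h-adequate} holds; if $x$ \emph{is} a loop-edge of $G_A$, i.e. both sides of the crossing lie on the same loop of $S_A$, then the smoothing change splits that loop into two (case (i)). (3) In the loop-edge case, use reducedness: I claim a loop-edge of $G_A$ in a reduced alternating diagram forces the crossing $x$ to be nugatory. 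Indeed, if both corners at $x$ abutting white regions lie on the same white-region boundary loop, one can produce a simple closed curve in $\Sigma$ meeting $D$ only at $x$; reducedness says such a curve cannot bound a disk, so the crossing is essential nugatory. But then the curve is non-separating or essential, and I would argue that in this situation the new loop created by the split is forced to be nontrivial in $\Sigma$ — or that $\wh S$ changes — so again the adequacy condition holds. (4) Repeat verbatim with colors reversed to get $B$-adequacy, hence skein adequacy.

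The main obstacle I anticipate is step (3): handling loop-edges of the Tait graph in the surface setting. In the planar case ($\Sigma = \R^2$) a loop-edge of $G_A$ is precisely a nugatory crossing and reducedness kills it outright, so $G_A$ has no loop-edges and one never even enters case (i) badly. On a surface, however, a crossing can be an \emph{essential} nugatory crossing (as the paper emphasizes via \Cref{f-nugatory}), and reducedness does not remove it. So I must show that an essential nugatory crossing does not violate adequacy — that when its $A$-smoothing is changed, the resulting split either creates a \emph{non}contractible new loop or alters $\wh S_A$ in $\cal{MC}(\Sigma)$. This should follow from the fact that the separating curve realizing the nugatory crossing is not disk-bounding, so at least one of the two pieces it cuts off carries nontrivial topology, forcing one of the two daughter loops to be essential; but making this rigorous requires a careful look at how the loops of $S_A$ sit relative to that separating curve. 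A secondary technical point is making precise the "checkerboard coloring on $N_D$" when $D$ is not cellularly embedded — this is why the argument is naturally phrased on the ribbon surface $N_D$ rather than on $\Sigma$ itself, and one must check that contractibility of loops is detected correctly under the inclusion $N_D \hookrightarrow \Sigma$ (which the paper's earlier subsurface proposition handles).
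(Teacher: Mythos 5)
Your overall strategy is the same as the paper's: reduce, via the trichotomy in the proof of \Cref{p-alt-adeq}, to the type-(i) split at a single crossing $x$ (your ``loop-edge'' case; the non-loop-edge cases (ii)--(iii) are automatically harmless, so no local check is needed there), use the checkerboard/alternating structure to see that the loops of $S_A$ run along crossing-free white regions, and use reducedness to forbid the creation of a trivial loop. But the step you flag as the main obstacle is the crux of the theorem, and the fix you sketch does not close it. You argue: loop-edge $\Rightarrow$ $x$ is nugatory, reducedness $\Rightarrow$ the nugatory crossing is essential, and then ``at least one of the two pieces carries nontrivial topology, forcing \emph{one} of the two daughter loops to be essential.'' Knowing that one daughter is essential is not enough. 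Consider the configuration where the parent loop $C$ is nontrivial and splits into a trivial $C_1$ and a nontrivial $C_2$: then $t(S)=t(S_A)+1$ while $|\wh{S}|=|\wh{S}_A|$ (the trivial $C_1$ is discarded and $C$ is simply replaced by $C_2$; indeed, since $C$ is a band sum of $C_1$ and $C_2$ at $x$, triviality of $C_1$ makes $C$ freely homotopic to $C_2$), and by the ``if and only if'' of \Cref{p-alt-adeq}(1) this is exactly a violation of $A$-adequacy. So you must prove that \emph{neither} daughter loop is trivial; ruling out only the ``both trivial'' case leaves the genuine failure mode open.

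The missing argument is short and is how the paper proceeds, with the implication run in the contrapositive direction rather than through the nugatory/essential dichotomy: for \emph{each} daughter loop $C_i$ ($i=1,2$) there is a simple closed curve $\gamma_i$ parallel to $C_i$, lying in the adjacent white region (crossing-free, by alternation and the checkerboard coloring) except in a small neighborhood of $x$, and meeting $D$ only at $x$. If $C_i$ were contractible, then $\gamma_i$ would be a contractible simple closed curve, hence would bound a disk, exhibiting $x$ as a \emph{removable} nugatory crossing and contradicting reducedness. Applying this to both $i=1$ and $i=2$ shows both daughters are noncontractible, so $t(S)\leq t(S_A)$ and adequacy at $x$ follows, with no separate analysis of essential nugatory crossings needed (your curve also need not separate $\Sigma$, so it need not literally witness nugatoriness in the paper's sense; only disk-bounding curves matter for reducedness). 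Your secondary concern about non-cellularly embedded diagrams is handled in the paper by \Cref{lem-24}, adding disjoint simple closed loops to make the diagram checkerboard colorable; your alternative of a local coloring on $N_D$ would also work, since components of $N_D\sm D$ lie inside components of $\Sigma\sm D$, so the parallel curves above still meet $D$ only at $x$.
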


Note that unlike Proposition 2.8 of \cite{Boden-Karimi-2019}, we do not assume here that $D$ is cellularly embedded, checkerboard colorable, nor that $D$ has no nugatory crossings.

A link diagram on $\Sigma$ is said to be {\bf weakly alternating} if
it is a connected sum $D_0\# D_1\# \cdots \# D_k$  of an alternating diagram $D_0$ in $\Sigma$ and  with alternating diagrams $D_1, \ldots, D_k$ in $S^2$ (cf., \Cref{l-connect-adeq}).
\Cref{t-alt-adeq} can be generalized to show that weakly alternating diagrams are adequate. In fact, in the next section we will prove \Cref{p-conn-sum-adeq}, showing that any diagram on a surface obtained as the connected sum of two adequate link diagrams is itself adequate.

Let us return to Tait conjectures now.
By \Cref{c-Tait}, any reduced alternating diagram $D$
has the minimal crossing number for all diagrams representing the same unframed link $L$ in $\Sigma\times I.$
Furthermore, all such oriented diagrams representing the same link $L$ have the same writhe.

The results of Kauffman, Murasugi, Thistlethwaite \cite{Kauffman-87, Murasugi-871, Thistlethwaite-87} imply that the span of the Kauffman bracket of any diagram $D\subset S^2$ satisfies
$${\rm span}([D]_{S^2})\leq 4c(D)+4,$$
or equivalently for the Jones polynomial,  that ${\rm span} (V_D(t)) \leq c(D)$, with
equality if $D$ is alternating.
Furthermore, in \cite{Thistlethwaite-87} Thistlethwaite proved that if $D\subset S^2$ is prime and non-alternating, then
$${\rm span}([D]_{S^2} )<4 c(D)+4.$$
In \cite{Turaev-1987}, it is observed that the above results hold if $D\subset S^2$ is weakly alternating, namely if $D$ is a connected sum of alternating diagrams.
Thus the Kauffman bracket $[ D ]_{S^2}$, together with $c(D)$, \emph{detect} weakly alternating classical links.

The homological Kauffman bracket of \cite{Boden-Karimi-2019} is not sufficiently strong to prove an analogous statement for links in thickened surfaces. Consider the two knots in the genus two surface in \Cref{f-4.107}.
These knots have the same homological Kauffman bracket, namely
$$\la D_1 \ra_{\Sigma}=\la D_2 \ra_{\Sigma}=3\delta z^2 - 4\delta^2 z+ (A^{4}+3+A^{-4})\delta,$$
but one of them is alternating and the other is not. Consequently, the homological Kauffman bracket does not detect alternating knots in thickened surfaces.

\begin{figure}[h!]
\centering
\includegraphics[height=42mm]{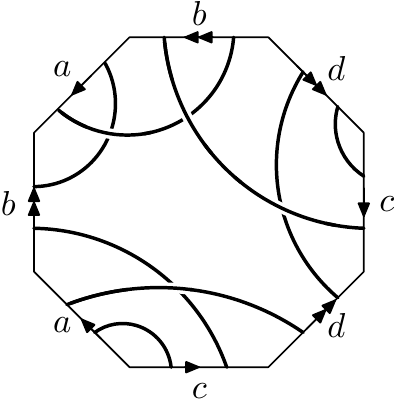} \qquad
\includegraphics[height=42mm]{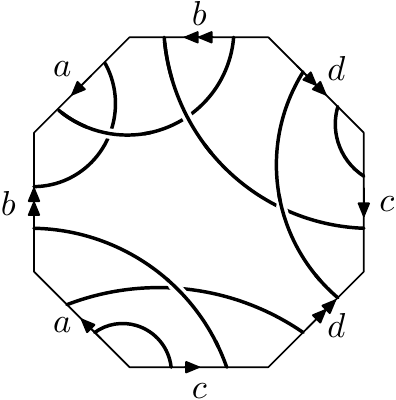}
\caption{Two knots in a genus two surface with the same homological Kauffman bracket.}\label{f-4.107}
\end{figure}

However, we are going to show that Kauffman, Murasugi, Thistlethwaite statements hold for the Kauffman bracket $[\, \cdot \, ]_\Sigma$ of diagrams in closed surfaces $\Sigma$ after replacing $4$ by $4|D|-4g(\Sigma)$ on the right.

Let $|D|$ denote the number of connected components of $D$ (which may be smaller than the number of connected components of the link in $\Sigma\times I$ represented by $D$).

Let $r(D)$ be the rank of the image of $i_* \colon H_1(D;\Z/2)\to H_1(\Sigma;\Z/2).$
If $D \subset \Sigma$ is minimally embedded, then $i_*$ is surjective and $r(D)=2g$.

The proof of the next result is given in \Cref{s-proof-altern-span}.

\begin{theorem} \label{t-altern-span}

(i) For any link diagram $D\subset \Sigma,$
$${\rm span}([D]_\Sigma) \leq 4c(D) + 4 |D|- 2 r(D).$$
(ii) If $D$ is cellularly embedded, reduced, and weakly alternating, then
$${\rm span}([D]_\Sigma) = 4c(D) + 4 |D|-4g(\Sigma).$$
(iii) If $D$ is not weakly alternating then
$${\rm span}([D]_\Sigma) < 4c(D) + 4 |D|-2r(D).$$
\end{theorem}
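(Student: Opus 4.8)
The plan is to reduce all three parts to the machinery already assembled in Sections \ref{s-adeq} and \ref{s-homological}. For part (i), I would combine \Cref{cor-adequate}, which gives
${\rm span}([D]_\Sigma) \leq 2c(D)+2t(S_A)+2t(S_B)$, with a bound on $t(S_A)+t(S_B)$ in terms of $c(D)$, $|D|$, and $r(D)$. The key inequality to establish is $t(S_A)+t(S_B) \leq c(D)+2|D|-r(D)$. This is the surface analogue of the classical Kauffman--Murasugi--Thistlethwaite count $|S_A|+|S_B|\leq c(D)+2$. I would prove it by building the Turaev-type construction: the states $S_A$ and $S_B$ together with the diagram determine a surface, and an Euler-characteristic count on $N_D$ (the ribbon neighborhood of $D$) relates $|S_A|+|S_B|$ to $c(D)$ and $\chi(N_D) = |D| - c(D)$. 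Then I pass from loop counts $|S_A|,|S_B|$ to contractible-loop counts $t(S_A),t(S_B)$ by subtracting off the homological contributions, using $r(S_A), r(S_B) \geq$ something controlled by $r(D)$; the precise bookkeeping is that the non-contractible loops of $S_A$ and $S_B$ jointly carry enough homology to account for $r(D)$, so $|S_A|-t(S_A) + |S_B|-t(S_B) \geq r(D)$ (possibly after a factor). I expect this homological counting step to be the main obstacle: one must argue carefully that the span of the images of $H_1(S_A)$ and $H_1(S_B)$ in $H_1(\Sigma;\Z/2)$ together cover the image of $H_1(D;\Z/2)$, which should follow because any loop in $D$ is homologous to a sub-collection of $S_A$-loops (or $S_B$-loops) after smoothing.

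For part (ii), if $D$ is cellularly embedded, reduced, and weakly alternating, I would first invoke \Cref{t-alt-adeq} (extended to weakly alternating diagrams via \Cref{p-conn-sum-adeq}, as promised in the text) to conclude $D$ is adequate, so \Cref{cor-adequate} holds with equality: ${\rm span}([D]_\Sigma) = 2c(D)+2t(S_A)+2t(S_B)$. It then remains to show that for such $D$ the inequality in part (i) is sharp, i.e. $t(S_A)+t(S_B) = c(D)+2|D|-2g(\Sigma)$. Since $D$ is cellularly embedded, $r(D) = 2g(\Sigma)$, so I need the Euler-characteristic count above to be an equality and the homological loss to be exactly $r(D)$. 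For an alternating diagram the two checkerboard surfaces are spanning surfaces, and the pure states $S_A$, $S_B$ trace out the boundaries of the checkerboard regions; cellular embeddedness forces the regions to be disks except for the genus, which pins down $t(S_A)$ and $t(S_B)$ precisely. Connected sums with alternating $S^2$-diagrams $D_1,\dots,D_k$ each contribute $+1$ to $|D|$ and behave exactly as in Turaev's classical argument, so the formula is additive under $\#$.

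For part (iii), the strategy mirrors Thistlethwaite's original argument. If $D$ is not weakly alternating, I want strict inequality in ${\rm span}([D]_\Sigma) < 4c(D)+4|D|-2r(D)$. There are two sources of potential failure of equality: either $D$ is not adequate, in which case \Cref{cor-adequate} is already strict and I am done after the part-(i) bound; or $D$ is adequate but one of the Euler-characteristic / homological counts in the proof of (i) is strict. The heart of the matter is that for a non-weakly-alternating diagram at least one crossing is ``bad'' for both the $A$- and $B$-colorings in the sense that the checkerboard-type surface it contributes to is not a disk where it would have to be for equality; this is exactly the dichotomy in Thistlethwaite's proof between crossings where $S_A$ and $S_B$ behave alternatingly and those where they do not. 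I expect this to be the subtlest part: one needs to show that failure of the weakly-alternating condition propagates to a genuine drop in $t(S_A)+t(S_B)$ (when $D$ is adequate) rather than merely to a drop in adequacy, and handling the interaction with essential nugatory crossings and non-minimal embeddings will require the full strength of \Cref{thm-min} and a careful reduction to the cellularly embedded, reduced case.
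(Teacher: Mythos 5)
Your reduction of part (i) to the estimate $t(S_A)+t(S_B)\leq c(D)+2|D|-r(D)$, and your treatment of part (ii) (adequacy from \Cref{t-alt-adeq} and \Cref{p-conn-sum-adeq}, checkerboard coloring plus an Euler characteristic count for the cellularly embedded alternating piece, additivity over connected sums with $S^2$-summands), do follow the paper. But the key counting inequality is exactly where your argument breaks. The justification you propose --- that the images of $H_1(S_A;\Z/2)$ and $H_1(S_B;\Z/2)$ in $H_1(\Sigma;\Z/2)$ together cover the image of $H_1(D;\Z/2)$ --- is false. Take the one-crossing diagram on the torus whose two lobes represent the generators $\alpha$ and $\beta$: both pure states consist of a single loop in the class $\alpha+\beta$ (mod $2$), so the two state images span only a rank-one subspace, while $r(D)=2$. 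Thus the ``homological bookkeeping'' you flag as the main obstacle cannot be done by a covering argument, and the inequality $(|S_A|-t(S_A))+(|S_B|-t(S_B))\geq r(D)$ (``possibly after a factor'') is left unproved. The paper avoids this entirely: it proves $t(S)+t(S^\vee)\leq c(D)+2|D|-r(D)$ for \emph{every} state of a shadow by induction on the crossing number, smoothing one crossing and tracking how $r$ and $t$ can change (\Cref{p-non-alt-ineq}(i) together with \Cref{in-line-lemma}), and then combines this with \Cref{cor-adequate}.

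Part (iii) is where the real content lies, and your sketch has two gaps. First, the branch ``$D$ not adequate, hence \Cref{cor-adequate} is already strict'' is not valid: by \Cref{p-dminmax}, adequacy is a \emph{sufficient} condition for the extremal degrees to be attained, not a necessary one, so non-adequacy gives you no strictness. Second, the other branch --- that failure of (weak) alternation forces a genuine drop in $t(S_A)+t(S_B)$ --- is precisely the hard combinatorial statement, and you offer no argument for it. In the paper this occupies all of the shadow machinery: strongly prime shadows and the fact that some smoothing of any crossing preserves strong primeness (\Cref{l-strongly-prime}), alternating states versus local checkerboard colorability (\Cref{alt-check}), single cycle bifurcations (\Cref{cc-single}), and the strict inequalities \Cref{p-non-alt-ineq}(ii),(iii) proved by separate inductions; one then reduces to prime diagrams via the connected-sum formula \eqref{e-csf} (using that $S_A$ is a non-alternating shadow state when $D$ is non-alternating), and handles prime diagrams whose shadow is not strongly prime by removing a self-crossing trivial arc and applying part (i) to the smaller diagram. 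Note also that no appeal to \Cref{thm-min} or a reduction to the cellularly embedded, reduced case is needed (or available) for (iii): the strict inequality is asserted for arbitrary diagrams, and the paper's proof never passes through minimal genus.
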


The assumptions of \Cref{t-altern-span} (ii) are necessary: 


If $D$ has a removable nugatory crossing, then eliminating it decreases the right hand side of the above equality but not the left hand side. Therefore, (ii) does not hold for diagrams with removable crossings.

It can also fail when $D$ is not cellularly embedded. For example, consider the alternating link in \Cref{f-minimaleg}. It has $t(S_A)=4$ and $t(S_B)=2$. Therefore, by \Cref{cor-adequate}, we have ${\rm span}([D]_\Sigma) \leq 16+12=28$, whereas $4c(D) + 4 |D|-4g(\Sigma) = 32.$ Note that this diagram is minimally embedded but not cellularly embedded.
\begin{figure}[h]
   \centering
   \includegraphics[width=3in]{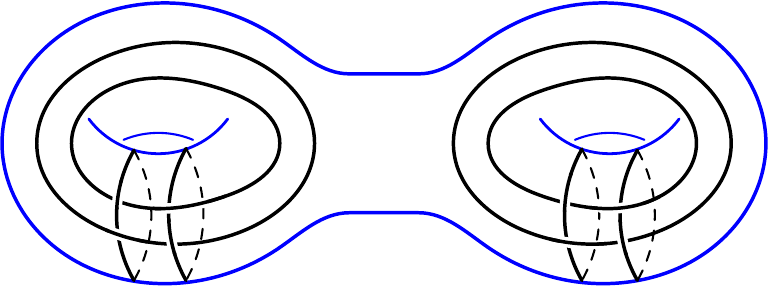}
   \caption{Minimally embedded alternating diagram for which the equality of \Cref{t-altern-span} (ii) does not hold.}
   \label{f-minimaleg}
\end{figure}

Although (ii) holds for weakly alternating diagrams, in the next section we will see that it does not hold generally for connected sums of alternating diagrams in arbitrary surfaces (see \Cref{ex-6knot}).

\begin{corollary} \label{cor-th}
Let $L$ be a  link in $\Sigma \times I$ with a reduced, weakly alternating diagram $D$ which is cellularly embedded. Then any other cellularly embedded diagram $E$ for $L$ satisfies $c(D)\leq c(E).$ If $E$ is not weakly alternating, then $c(D) < c(E).$ 
\end{corollary}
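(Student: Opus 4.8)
The plan is to read everything off \Cref{t-altern-span}, using the elementary fact that ${\rm span}([\,\cdot\,]_\Sigma)$ is an invariant of the \emph{unframed} link type in $\Sigma\times I$: Reidemeister~II and~III moves do not change the skein bracket at all, while a Reidemeister~I move multiplies it by $-A^{\pm3}$, which shifts every coefficient polynomial uniformly and therefore leaves the $A$-span unchanged. Hence ${\rm span}([D]_\Sigma)={\rm span}([E]_\Sigma)$ whenever $D$ and $E$ represent the same link $L$.

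First I would use the hypotheses on $D$. Since $D$ is cellularly embedded, reduced, and weakly alternating, \Cref{t-altern-span}(ii) gives
$${\rm span}([D]_\Sigma)=4c(D)+4|D|-4g(\Sigma).$$
Next I would bound ${\rm span}([E]_\Sigma)$ from above. Because $E$ is cellularly embedded it is minimally embedded, so $i_*\colon H_1(E;\Z/2)\to H_1(\Sigma;\Z/2)$ is onto and $r(E)=2g(\Sigma)$; feeding this into \Cref{t-altern-span}(i) yields
$${\rm span}([E]_\Sigma)\le 4c(E)+4|E|-2r(E)=4c(E)+4|E|-4g(\Sigma).$$
Combining the two displays with ${\rm span}([D]_\Sigma)={\rm span}([E]_\Sigma)$ produces $c(D)+|D|\le c(E)+|E|$. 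The remaining ingredient is the bookkeeping identity $|D|=|E|$: a cellularly embedded diagram is connected on each connected component of $\Sigma$, and $D$ (resp.\ $E$) meets a given component of $\Sigma$ exactly when $L$ does; summing over components gives $|D|=|E|$. Therefore $c(D)\le c(E)$.

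For the strict inequality, suppose in addition that $E$ is not weakly alternating. Then \Cref{t-altern-span}(iii), again with $r(E)=2g(\Sigma)$, improves the estimate to ${\rm span}([E]_\Sigma)<4c(E)+4|E|-4g(\Sigma)$, and running the same chain of (in)equalities with $|D|=|E|$ now forces $c(D)<c(E)$.

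The only point requiring any care is the identity $|D|=|E|$ — i.e.\ checking that cellular embeddedness pins down the number of diagram components (because such a diagram is connected on each component of $\Sigma$); granting \Cref{t-altern-span}, everything else is immediate, and no skein-theoretic input beyond that theorem and the regular-isotopy invariance of the $A$-span is needed.
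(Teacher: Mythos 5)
Your proposal is correct and takes essentially the same route as the paper's proof: both deduce everything from \Cref{t-altern-span}, applying (ii) to $D$, (i) to $E$ with $r(E)=2g(\Sigma)$ (cellular embeddedness implying minimal embedding), and (iii) for the strict inequality, together with the invariance of ${\rm span}([\,\cdot\,]_\Sigma)$ under Reidemeister moves. Your explicit bookkeeping $|D|=|E|$ is just the paper's reduction to connected diagrams on connected surfaces phrased componentwise, so there is no substantive difference.
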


\begin{proof}
The first part of the proof is a direct consequence of Tait conjecture (\Cref{c-Tait}).
Let us prove the full statement now:
Any cellularly embedded link diagram on a connected surface is itself connected. Therefore, it is enough to prove the statement under the assumption that $\Sigma$ and $D$ are both connected.
\Cref{t-altern-span} (ii) then implies that $c(D)={\rm span}([D]_\Sigma)/4 +g(\Sigma)-1.$
If $E$ is a second link diagram for $L$ on $\Sigma$, then since $E$ is cellularly embedded, it must also be connected.
\Cref{t-altern-span} (i) implies that $$c(D) = {\rm span}([D]_\Sigma)/4 +g(\Sigma)-1 ={\rm span}([E]_\Sigma)/4 +g(\Sigma)-1 \leq c(E).$$
If $E$ is not weakly alternating, then \Cref{t-altern-span} (iii) shows the last inequality is strict, therefore, it follows that $c(D) < c(E)$.
\end{proof}

\begin{remark}
The corollary  gives an alternate proof of \Cref{thm-min} for non-split alternating links as follows.
Let $L$ be a non-split alternating link in $ \Sigma \times I,$ where $\Sigma$ is closed oriented surface, and let $D \subset \Sigma$ a minimal crossing cellularly embedded diagram for $L$.
Then \Cref{cor-th} implies that $D$ is an alternating diagram. The argument is completed by appealing to Proposition 6 of \cite{BK-2020}, which shows that alternating link diagrams have minimal genus.
\end{remark}

%
\section{Crossing number and connected sums}
\label{s-connected-sums}

In this section, we will study the behavior of the crossing number under connected sum of links in thickened surfaces.
This problem is closely related to an old and famous conjecture for classical links, which asserts that, for any two links $L_1, L_2$,
\begin{equation} \label{e-hard}
c(L_1 \# L_2) = c(L_1) + c(L_2).
\end{equation}
This conjecture has been verified for a wide class of links, including alternating links, adequate links, and torus links \cite{Diao-2004}.
Clearly, $c(L_1 \# L_2) \leq c(L_1) + c(L_2)$. In addition, in \cite{Lackenby-2009},
Lackenby has proved that, in general, one has a lower bound of the form:
$$c(L_1 \# L_2) \geq \tfrac{1}{152} \left( c(L_1) + c(L_2)\right).$$

The operation of connected sum is not so well-behaved for arbitrary links in thickened surfaces.

Just as for classical links, it depends on the choice of components which are joined as well as their orientations. However, unless one of the links is in $S^2 \times I$, it also depends on the diagram representatives as well as the choice of basepoints $x_i \in D_i$ where the link components are joined. The issue is the fact that a Reidemeister move applied to either of the link diagrams may change the link type of their connected sum.  We take a moment to quickly review its construction.

Suppose $\Sigma_1$ and $\Sigma_2$ are oriented surfaces and let $\Sigma_1\# \Sigma_2$ denote their connected sum. It is obtained from the union $(\Sigma_1 \sm {\rm int}\,B_1) \cup (\Sigma_2 \sm {\rm int}\,B_2)$ by gluing $\p B_1\subset \Sigma_1$ to
$\p B_2\subset \Sigma_2$ by an orientation reversing homeomorphism $g \colon \p B_1 \to \p B_2$.
For connected surfaces, $\Sigma_1\# \Sigma_2$ is independent of the choice of disks $B_i \subset \Sigma_i$ and gluing map.

If $D_1 \subset \Sigma_1$ and $D_2 \subset \Sigma_2$ are link diagrams, we can choose cutting points $x_i\in D_i$ and disk neighborhoods $B_i$ from $\Sigma_i$ such that $B_i\cap D_i$ is an interval for $i=1,2$. Then the surface $\Sigma_1\# \Sigma_2$ can be formed in such a way that  $D=(D_1 \sm {\rm int}\,B_1) \cup (D_2 \sm {\rm int}\, B_2)$  is a link diagram in $\Sigma_1\# \Sigma_2$.
If $D_1,D_2$ are oriented link diagrams, then we require the gluing to respect the orientations of the arcs.
The resulting diagram is called a {\bf connected sum} of $D_1$  and $D_2$.
In general, it depends on the choice of link diagrams $D_1,D_2$, components being joined, and the points $x_i \in D_i.$ However, it is independent of the choice of disk neighborhoods $B_i$ containing $x_i$.

The next result shows that when one of the diagrams lies in $S^2\times I$, the operation of connected sum
is well-behaved.

\begin{lemma} \label{l-connect-adeq}
Let $D_1 \subset \Sigma\times I$ and $D_2\subset S^2\times I$ be
oriented diagrams, where $\Sigma$ is an arbitrary surface.
Then the connected sum of $D_1$ and $D_2$ is independent of the choice of the cutting points $x_1, x_2$ on the selected components of $D_1$ and of $D_2.$

We will denote the connected sum in this case by $D_1\# D_2$. The oriented link type of $D_1\# D_2$ depends only on the link types of $D_1$ and $D_2$ and a choice of which components are joined.
\end{lemma}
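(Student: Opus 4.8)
The plan is to exploit the fact that $S^2$ is simply connected, so that a diagram in $S^2 \times I$ can be ``slid around the sphere'' without changing its link type. First I would fix the component of $D_1$ and the component of $D_2$ along which we form the connected sum, and fix one choice of cutting points $x_1 \in D_1$, $x_2 \in D_2$. The claim to prove is that replacing $x_1$ by another point $x_1'$ on the same component of $D_1$, or $x_2$ by another point $x_2'$ on the same component of $D_2$, yields a diagram in $\Sigma \# S^2 = \Sigma$ representing the same oriented link in $\Sigma \times I$. Since the two modifications are symmetric in spirit but not in effect (one side is in $S^2$, the other in $\Sigma$), I would treat them separately.

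For moving the cutting point $x_2$ along its component of $D_2$: the point $x_2$ travels along an arc $\gamma$ of the chosen component of $D_2$. Think of the connected sum as inserting the punctured sphere $S^2 \sm {\rm int}\,B_2$ (carrying the diagram $D_2$ minus a little arc) into a small disk region of $\Sigma$ near $x_1$, glued along $\p B_1 = \p B_2$. Dragging $x_2$ to $x_2'$ along $\gamma$ corresponds to an isotopy of the glued-in sphere-with-diagram that ``rotates'' it so that the arc $\gamma$ now runs through the gluing circle; because $S^2$ is a sphere, this is realized by an ambient isotopy of $S^2 \times I$ fixing a neighborhood of the boundary sphere, and hence extends to an ambient isotopy of $(\Sigma \# S^2) \times I$ supported in the inserted region. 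The key geometric input is that on $S^2$ one may isotope any arc past the ``point at infinity'' (the disk $B_2$), which is exactly the statement that spheres have no obstruction to such sliding — this is the step where the hypothesis $D_2 \subset S^2 \times I$ (rather than an arbitrary surface) is essential, and it is where I expect the real work to be.

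For moving the cutting point $x_1$ along its component of $D_1$: here I would instead slide the whole inserted sphere-with-diagram along the component of $D_1$. Concretely, the connected-sum region is a disk $B_1 \subset \Sigma$ through which an arc of $D_1$ passes; sliding this disk along the component of $D_1$ from $x_1$ to $x_1'$ is an ambient isotopy of $\Sigma \times I$ carrying one connected-sum diagram to the other, provided the disk stays embedded throughout — which it does, since it is being dragged along an embedded arc of the diagram and can be taken thin enough to avoid the rest of $D_1$. (If the path from $x_1$ to $x_1'$ along the component passes through crossings of $D_1$, one pushes the small disk ``over or under'' at each crossing; since $D_2$ lives in the thin slab corresponding to $B_1$, this is a Reidemeister-II/III-type finger move and does not change the link type.) Combining the two cases, any two choices of cutting points give isotopic oriented links, which proves the first assertion; the final sentence then follows since the only remaining data is the isotopy class of $D_1$, the isotopy class of $D_2$ in $S^2$, and the choice of joined components, and a standard verification shows Reidemeister moves on $D_1$ or on $D_2$ (away from the cutting region, which by the above we are free to relocate) carry over to Reidemeister moves on the connected sum.

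The main obstacle I anticipate is making the ``slide $x_2$ around the sphere'' argument fully rigorous: one must check that the relevant isotopy of $S^2 \times I$ can be chosen to fix a collar of the boundary sphere $\p(S^2 \sm {\rm int}\,B_2) \times I$ pointwise, so that it glues to the identity on the $\Sigma$ side; this uses that the mapping class group of $S^2$ fixing a disk is trivial (equivalently, ${\rm Diff}(S^2, B_2)$ is connected), together with an isotopy-extension argument to upgrade the diagram-level isotopy to an ambient one in the thickening. Everything else is routine finger-move and isotopy-extension bookkeeping.
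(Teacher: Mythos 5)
Your treatment of the cut point $x_1$ and of the final statement follows the same route as the paper: shrink the $S^2$ summand into a small ball, slide it along the chosen component of $D_1$ passing over or under at each crossing, and use the same shrinking to transport Reidemeister moves to the connected sum. The gap is in your argument for independence of $x_2$. The paper disposes of this step by quoting the classical fact that the $(1,1)$-tangles obtained by cutting $D_2$ at different points of its chosen component are isotopic; you instead try to manufacture the required isotopy from surface-level properties of $S^2$, and those properties do not deliver it. A rotation of $S^2$ carrying $x_2'$ to $x_2$ need not carry $D_2$ to itself, so ``rotating the glued-in sphere'' does not relate the tangle cut at $x_2$ to the tangle cut at $x_2'$; connectivity of $\mathrm{Diff}(S^2,B_2)$ concerns diffeomorphisms that already fix $B_2$ and so cannot move the cut point past a crossing; and the claim that ``any arc can be isotoped past the point at infinity'' fails for the arc $\gamma$ from $x_2$ to $x_2'$, since $\gamma$ in general passes over some strands of $D_2$ and under others, so it can be pushed across the gluing disk neither by an isotopy of the surface nor by a free isotopy of the arc in $S^2\times I$.

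What is actually needed is the same three-dimensional finger move you use on the $D_1$ side, carried out inside $S^2\times I$: slide the small cutting ball around $x_2$ along the chosen component to a ball around $x_2'$, passing over or under the transverse strand at each crossing encountered. This yields an ambient isotopy of $S^2\times I$ taking the pair (diagram, cutting ball at $x_2$) to (diagram, cutting ball at $x_2'$), and a standardization step --- where your remarks about homeomorphisms of a ball rel boundary, isotopy extension, and the harmlessness of boundary twists for unframed $(1,1)$-tangles are the relevant tools --- then identifies the two complementary tangles rel boundary, so that gluing them to the fixed $D_1$-side tangle gives isotopic links. This is precisely the ``well-known fact'' about long knots, i.e.\ $(1,1)$-tangles, that the paper cites at this point; once that ingredient is supplied (or simply invoked), the remainder of your argument goes through and agrees with the paper's proof.
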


\begin{proof} One can shrink the image of $D_2$ in the connected sum so that all its crossings lie in a small 3-ball $B^3$ in $\Sigma \times I.$ By an isotopy, we can move the ball along arcs of $D_1$ representing the component to which $D_1$ is joined, and moving over or under the other arcs at any crossing that we encounter.

This shows that the connected sum is independent of the choice of the cut point $x_1$ on $D_1$. The independence on the cut point $x_2$ on $D_2$ follows from the well-known fact that all long knots, or rather $(1,1)$ tangles, obtained by cutting $D_2$ at different points $x_2$ of its specified component are isotopic (as $(1,1)$ tangles). Shrinking $D_2$ into a small 3-ball also allows one to translate any Reidemeister move of $D_1$ or $D_2$ into a Reidemeister move on the connected sum $D_1 \#D_2$. This proves the last statement.
\end{proof}

\begin{proposition}\label{p-conn-sum-adeq}
Any connected sum of  two $A$- or $B$-adequate diagrams is itself $A$- or $B$-adequate (respectively).
\end{proposition}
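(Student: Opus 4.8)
The plan is to analyze the pure $A$-state of a connected sum $D = D_1 \# D_2$ and reduce $A$-adequacy of $D$ to $A$-adequacy of $D_1$ and $D_2$ separately (the $B$-case being identical). First I would observe that crossings of $D$ are in natural bijection with the disjoint union of the crossings of $D_1$ and $D_2$, and that the pure $A$-state $S_A$ of $D$ restricts, away from the connecting band, to the pure $A$-states $S_A^1$ and $S_A^2$ of $D_1$ and $D_2$. The connected sum is formed by removing an arc from each $D_i$ inside a disk $B_i$ and regluing; in the state $S_A$, the loop structure near the band is controlled, since the connecting arc lies on a single strand. The key combinatorial point to nail down is how the loop of $S_A^1$ through $x_1$ and the loop of $S_A^2$ through $x_2$ interact: they get joined along the band, so $|S_A| = |S_A^1| + |S_A^2| - 1$. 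Similarly, for any crossing $x$ of $D$, say $x$ belongs to $D_1$, switching its smoothing in $S_A$ produces a state whose restriction to $D_1$ is the corresponding adjacent state $S^1$ of $S_A^1$ and whose restriction to $D_2$ is unchanged, i.e. still $S_A^2$.

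Next I would use \Cref{p-alt-adeq}: it suffices to rule out a state $S$ adjacent to $S_A$ with $t(S) = t(S_A) + 1$ and $|\widehat S| = |\widehat S_A|$; equivalently (case (i) in the proof of \Cref{p-alt-adeq}) one where a single loop $C$ of $S_A$ splits into two loops $C_1, C_2$ of $S$, at least one of which is trivial in $\Sigma_1 \# \Sigma_2$. Assume the switched crossing $x$ lies in $D_1$. The splitting $C \to C_1 \cup C_2$ happens inside the diagram $D_1$ part, but $C$ may or may not be the loop that runs through the connecting band. The main case analysis is: (a) if $C$ does not pass through the band, then the split $C \to C_1 \cup C_2$ takes place entirely within $D_1$, and a loop trivial in $\Sigma_1 \# \Sigma_2$ and disjoint from the band is trivial in $\Sigma_1$ as well, so this violates $A$-adequacy of $D_1$ (via \Cref{p-alt-adeq}); (b) if $C$ is the loop through the band, then removing a small disk near the band from $C$ exhibits $C$ as the connected sum (band-join) of the $x_1$-loop of $S_A^1$ and the $x_2$-loop of $S_A^2$, and splitting it at a crossing of $D_1$ corresponds to splitting the $D_1$-portion; one of the resulting pieces $C_i$ then contains the whole band and the $D_2$-portion, and the other is a loop lying in the subsurface $\Sigma_1 \smallsetminus B_1$. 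If the latter is trivial in $\Sigma_1 \# \Sigma_2$, it bounds a disk, and since it lies in $\Sigma_1 \smallsetminus B_1 \subset \Sigma_1$ the disk can be taken in $\Sigma_1$ (a disk met by no other strand, and $B_1$ can be isotoped off it), so again $A$-adequacy of $D_1$ is violated.

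The step I expect to be the main obstacle is the topological bookkeeping in case (b): making precise the claim that a loop of $S_A$ lying in the complement of the connecting band and bounding a disk in $\Sigma_1 \# \Sigma_2$ must already bound a disk in $\Sigma_1$. This is intuitively clear — the connect-sum sphere $\partial B_1 = \partial B_2$ separates $\Sigma_1 \# \Sigma_2$, and a null-homotopic loop on one side bounds a disk on that side, up to absorbing the handle $\Sigma_2 \smallsetminus B_2$, which cannot help a loop become trivial — but it needs a clean argument, e.g. via the fact that $\pi_1(\Sigma_1 \smallsetminus B_1) \to \pi_1(\Sigma_1 \# \Sigma_2)$ is injective for surfaces (both groups being free, or for closed surfaces using that the connect-sum decomposition is along an incompressible curve). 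A convenient alternative is to work at the level of $N_D$, the ribbon neighborhood: the loop in question lies in the sub-ribbon-surface $N_{D_1}$, and triviality in $\Sigma$ pulls back to triviality in $\Sigma_1$ by the previously established fact (the proposition on subsurfaces, applied to $N_{D_1} \subset \Sigma_1$) — this lets me invoke the already-proved inheritance of adequacy under subsurfaces and avoid re-deriving anything. Once case (b) is settled, the symmetric argument when $x$ lies in $D_2$ finishes the proof, and reversing all smoothings gives the $B$-adequate case verbatim.
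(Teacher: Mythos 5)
Your overall strategy is the same as the paper's: use \Cref{p-alt-adeq} to reduce a failure of $A$-adequacy of $D=D_1\# D_2$ to a single cycle $C$ of $S_A$ splitting, at a crossing $x$ of (say) $D_1$, into two cycles with the trivial-loop count increasing, and then distinguish whether $C$ avoids or passes through the connect-sum band. The genuine gap is in your case (b): you only treat the subcase in which the newly created trivial loop is the piece lying in $\Sigma_1\smallsetminus B_1$, and you say nothing about the subcase in which the trivial loop is the other piece, the one running through the band. Writing $C=C_1\# C_2$ (the band-join of the cycle $C_1$ of $S_A(D_1)$ through $x_1$ with the cycle $C_2$ of $S_A(D_2)$ through $x_2$), the split at $x$ produces one cycle of the form $C_1'\# C_2$ and one cycle $C_1''\subset\Sigma_1\smallsetminus B_1$, and it can perfectly well be $C_1'\# C_2$ that is trivial in $\Sigma_1\#\Sigma_2$ while $C_1''$ is nontrivial (for instance $C_1'$ bounding a disk in $\Sigma_1$ and $C_2$ bounding a disk in $\Sigma_2$). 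Your hypothesis ``if the latter is trivial'' then fails and your argument yields no contradiction. This is exactly where the paper's proof does its real topological work: triviality of $C_1'\# C_2$ in $\Sigma_1\#\Sigma_2$ forces $C_1'$ to be trivial in $\Sigma_1$ \emph{and} $C_2$ to be trivial in $\Sigma_2$; then, according to whether $C_1\# C_2$ is trivial or not, either all of $C_1,C_1',C_1''$ are trivial in $\Sigma_1$, or $C_1,C_1''$ are nontrivial and $C_1'$ is trivial, and in both situations the transition $S_A(D_1)\to S(D_1)$ raises $t$ while keeping $|\wh{S}(D_1)|=|\wh{S}_A(D_1)|$, contradicting $A$-adequacy of $D_1$ by \Cref{p-alt-adeq}. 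So the fact you singled out as the main obstacle (a loop in $\Sigma_1\smallsetminus B_1$ null-homotopic in $\Sigma_1\#\Sigma_2$ is null-homotopic in $\Sigma_1$) is not the crux; the band-crossing trivial loop is.

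A secondary point: even in the subcases you do treat, ``the split-off loop is trivial in $\Sigma_1$, hence $A$-adequacy of $D_1$ is violated'' needs one more line. To apply \Cref{p-alt-adeq} to $D_1$ you must show $t(S(D_1))=t(S_A(D_1))+1$, which depends on the triviality of $C_1$ and $C_1'$ \emph{in $\Sigma_1$}, and these cannot be read off directly from triviality in $\Sigma_1\#\Sigma_2$ (a curve disjoint from $B_1$ can bound a disk in $\Sigma_1$ containing $B_1$ and yet be essential in the connected sum). One must rule out the combination ``$C_1$ and $C_1''$ trivial, $C_1'$ nontrivial in $\Sigma_1$,'' e.g.\ via the relation $[C_1]=[C_1'][C_1'']$ in $\pi_1(\Sigma_1)$ based at the crossing, as in the proof of \Cref{p-alt-adeq}; after that every remaining possibility does contradict $A$-adequacy of $D_1$. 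Also, your proposed shortcut through the subsurface proposition does not apply as stated: that result compares adequacy of one fixed diagram in two ambient surfaces, whereas what you need here is a statement about individual cycles, and the crossing being switched is a crossing of the connected-sum diagram, not of $D_1$ sitting unchanged in $\Sigma_1\#\Sigma_2$.
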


\begin{proof}
Let $D$ be a link diagram in $\Sigma_1 \# \Sigma_2$ obtained as the connected sum of $A$-adequate diagrams
$D_1\subset \Sigma_1$ and $D_2\subset \Sigma_2$, and suppose to the contrary that $D$ is not $A$-adequate.
By \Cref{p-alt-adeq}, there is a state $S$ for $D$ adjacent to $S_A$ with $t(S,\Sigma_1\#\Sigma_2)=t(S_A, \Sigma_1\#\Sigma_2)+1$ and $|\wh{S}|=|\wh{S}_A|$ in $\Sigma_1 \# \Sigma_2$. In particular, $|S|=|S_A|+1$, and the transition from $S_A$ to $S$ involves one cycle of $S_A$ splitting into two cycles.

Let $x$ be the crossing of $D$ where the smoothing is changed in the transition from $S_A$ to $S$. We can assume, without loss of generality, that $x$ is a crossing from $D_1$.
Let $C$ be the cycle of $S_A$ that splits into two cycles, $C'$ and $C''$ under this transition.
Since  $t(S,\Sigma_1\#\Sigma_2)=t(S_A, \Sigma_1\#\Sigma_2)+1$, one of
the cycles $C'$ and $C''$, say $C'$, must be trivial.

If $C$ is a cycle contained in $S_A(D_1)$,  then the same is true for $C'$ and $C''$.
However, this contradicts the assumption that $D_1$ is $A$-adequate.

Otherwise, $C=C_1\# C_2$ must be a connected sum of a cycle
$C_1$ in $S_A(D_1)$ with a cycle $C_2$ in $S_A(D_2).$
In the transition from $S_A$ to $S$, the cycle $C_1\# C_2$ splits into $C_1'\# C_2$ and $C_1''\# C_2$.
Further, since $C'=C_1'\# C_2$ is trivial, it follows that $C_1'$ must be trivial in $\Sigma_1$ and $C_2$ must be trivial in $\Sigma_2$.

If $C_1\# C_2$ is trivial, then $C_1''\# C_2$ must also be trivial. That would imply that all three of $C_1, C_1',C_1''$ are trivial in $\Sigma_1$. This again contradicts the assumption that $D_1$ is $A$-adequate, and we take a moment to explain this point.

Let $S(D_1)$ be the corresponding state for $D_1$. It is obtained from $S_A(D_1)$ by switching the smoothing at $x$.
The transition from $S_A(D_1)$ to $S(D_1)$ involves  $C_1$ splitting
into  $C_1'$ and $C_1''$. Since all three of $C_1, C_1',C_1''$ are trivial in $\Sigma_1$, we have $t(S(D_1)) = t(S_A(D_1))+1$ and $|\wh{S}(D_1)| = \wh{S}_A(D_1)$ in $\Sigma_1,$ which contradicts the assumption of $A$-adequacy of $D_1$.

The other possibility is that $C_1\#C_2$ is non-trivial. Since $C_2$ is trivial in $\Sigma_2$,
the cycles $C_1$ and $C_1''$ must both be nontrivial in $\Sigma_1$. The transition from $S_A(D_1)$ to $S(D_1)$ still involves $C_1$ splitting into $C_1'$ and $C_1''$, only now $C_1, C_1''$ are nontrivial and $C_1'$ is trivial in $\Sigma_1.$ Thus $t(S(D_1)) = t(S_A(D_1))+1$ and $|\wh{S}(D_1)| = \wh{S}_A(D_1)|$ in $\Sigma_1,$ which again contradicts the assumption of $A$-adequacy of $D_1$.
Therefore, $D=D_1\#D_2$ must be $A$-adequate.

The proof of $B$-adequacy of $D$ is similar.
\end{proof}

\begin{corollary}\label{c-connected}
Suppose $L_1 \subset \Sigma_1 \times I$ and $L_2 \subset \Sigma_2 \times I$ are links represented by adequate diagrams $D_1 \subset \Sigma_1$ and $D_2 \subset \Sigma_2.$
Then any  link $L$ in $(\Sigma_1 \#\Sigma_2)\times I$ admitting a diagram which is a connected sum of $D_1$ and $D_2$ is
itself adequate. Further, the crossing number and writhe satisfy
$c(L) = c(L_1)+c(L_2)$ and $w(L)=w(L_1)+w(L_2)$.
\end{corollary}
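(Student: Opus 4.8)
The plan is to derive the corollary formally from \Cref{p-conn-sum-adeq} and the generalized Tait conjectures in \Cref{c-Tait}. Let $D = D_1 \# D_2$ denote the given connected-sum diagram on $\Sigma_1 \# \Sigma_2$ representing $L$. Since $D_1$ and $D_2$ are adequate, each is simultaneously $A$-adequate and $B$-adequate, so applying \Cref{p-conn-sum-adeq} in its $A$-version and then in its $B$-version shows that $D$ is both $A$- and $B$-adequate, hence adequate. This gives the first assertion at once.

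Next I record the elementary diagrammatic additivity of crossing number and writhe under connected sum. By the construction recalled just before \Cref{l-connect-adeq}, $D$ is obtained from $D_1$ and $D_2$ by deleting the arcs $B_i \cap D_i$, which carry no crossings, and regluing along $\partial B_1 = \partial B_2$; hence every crossing of $D$ is a crossing of exactly one of $D_1, D_2$, with the same local picture and, when orientations are present, the same sign. Therefore $c(D) = c(D_1) + c(D_2)$ and $w(D) = w(D_1) + w(D_2)$.

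Finally I combine these facts with \Cref{c-Tait}. Part (i), applied to the adequate diagrams $D$, $D_1$, $D_2$ of $L$, $L_1$, $L_2$ respectively, yields $c(L) = c(D)$, $c(L_1) = c(D_1)$, and $c(L_2) = c(D_2)$, so $c(L) = c(D_1) + c(D_2) = c(L_1) + c(L_2)$. Part (ii) says the writhe of an adequate diagram is an invariant of the oriented link it represents, so $w(L) = w(D)$, $w(L_1) = w(D_1)$, $w(L_2) = w(D_2)$, and therefore $w(L) = w(L_1) + w(L_2)$.

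There is no genuinely hard step here: the substance lies in \Cref{p-conn-sum-adeq} and \Cref{c-Tait}, and the corollary only assembles them. The points worth stating carefully are that ``adequate'' abbreviates ``$A$-adequate and $B$-adequate'' (so \Cref{p-conn-sum-adeq} must be invoked twice), and that for the writhe equality the links $L$, $L_1$, $L_2$ are taken as oriented with orientations matching the chosen connected sum, since \Cref{c-Tait}(ii) is a statement about oriented links. It may also be worth remarking that the identity $c(L) = c(L_1) + c(L_2)$ is the thickened-surface analogue of the classical additivity conjecture \eqref{e-hard}, here obtained unconditionally for adequate links.
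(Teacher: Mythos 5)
Your proposal is correct and follows essentially the same route as the paper: adequacy of $D=D_1\#D_2$ via \Cref{p-conn-sum-adeq}, then parts (i) and (ii) of \Cref{c-Tait} combined with the evident diagrammatic additivity $c(D)=c(D_1)+c(D_2)$ and $w(D)=w(D_1)+w(D_2)$. The only difference is that you make explicit the additivity step and the double invocation of \Cref{p-conn-sum-adeq} for $A$- and $B$-adequacy, which the paper leaves implicit.
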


\begin{proof}
Suppose $L$ is represented by $D=D_1 \#D_2 \subset \Sigma_1\#\Sigma_2$. Then $D$ is adequate by \Cref{p-conn-sum-adeq}. Further, by parts (i) and (ii) of \Cref{c-Tait}, we see that:
\begin{eqnarray*}
c(L)&=& c(D)=c(D_1)+c(D_2)= c(L_1)+c(L_2) \quad \text{and} \\
w(L)&=& w(D)=w(D_1)+w(D_2)= w(L_1)+w(L_2). \qedhere
\end{eqnarray*}
\end{proof}

\begin{example} \label{ex-6knot}
\Cref{f-connected-sum} shows a knot diagram $D$ in the genus two surface obtained as the connected sum of two alternating diagrams of the same knot in the torus. One can easily verify that $D$ is reduced and cellularly embedded, but not alternating. Further, \Cref{p-conn-sum-adeq} implies that this diagram is adequate, and therefore a minimal crossing diagram for the knot type. Direct calculation reveals that $t(S_A) = 2, t(S_B) = 0,$ and $|\hat{S}_A| = |\hat{S}_B|=1$. Therefore,
${\rm span}([ D ]_\Sigma )=16$. On the other hand, since $4(c(D)+|D|-g(\Sigma)) = 20,$
by \Cref{t-altern-span} (ii), it follows that
$D$ is not weakly alternating, and in fact not equivalent to any weakly alternating knot in $\Sigma \times I$.

\begin{figure}[!ht]
   \centering
   \includegraphics[width=3in]{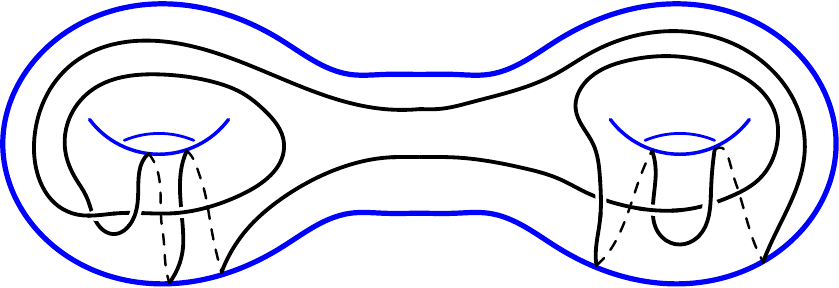}
   \caption{A connected sum of alternating diagrams.}
   \label{f-connected-sum}
\end{figure}
\end{example}

\begin{example} \label{ex-7knot}

\Cref{f-7-crossing} shows a knot in a genus two surface with an essential nugatory crossing.
Since it is reduced and alternating, \Cref{t-alt-adeq} shows that it is adequate.  Note that this diagram is not homologically adequate. In fact, if $S$ is the state with a $B$-smoothing at the nugatory crossing and $A$-smoothings at all the other crossings, then one can show that $|S| = |S_A|+1$ and $k(S)>k(S_A).$

Notice that this knot can also be obtained as the connected sum of two alternating knots $K_1,K_2$ in $T^2 \times I$, with $c(K_i)=3$ but after performing a Reidemeister one move on one of them to obtain a diagram with four crossings. In particular, this example shows that a connected sum of two diagrams $D_1 \subset \Sigma_1$ and $D_2 \subset \Sigma_2$ can be adequate even when one of them is not adequate.

\begin{figure}[ht!]
\centering\includegraphics[width=3in]{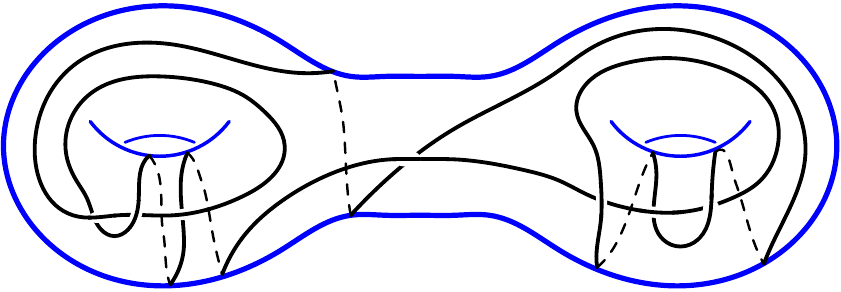}
\caption{An alternating diagram with an essential nugatory crossing.} \label{f-7-crossing}
\end{figure}
\end{example}

Suppose $L_1 \subset \Sigma_1 \times I$ and $L_2 \subset \Sigma_2 \times I$ are two alternating links in thickened surfaces with
$g(\Sigma_i)>0$ for $i=1,2$. Suppose further that $D_i$ is a link diagram on $\Sigma_i$ representing $L_i$ for $i=1,2$, and that $D_1,D_2$ are both reduced and alternating.

Instead of forming the connected sum of $D_1$ and $D_2$, take one of the diagrams and insert an arbitrary number (say $n$) of twists before forming the connected sum. See \Cref{f-twist-connect} for an illustration.

The result will be a diagram $D$, which is similar to a connected sum of $D_1$ and $D_2$, but with $n$ essential nugatory crossings in between. This construction can be carried out so that $D$ is reduced and alternating. In particular, it will have crossing number $c(D) =c(D_1)+c(D_2)+n$. If $L$ denotes the link type of $D$, and since $D_1$ and $D_2$ are alternating and have minimal crossing number, this shows that the analogue of \eqref{e-hard} can fail arbitrarily badly for links in thickened surfaces other than $S^2 \times I$.

\begin{figure}[ht!]
\centering\includegraphics[width=4.8in]{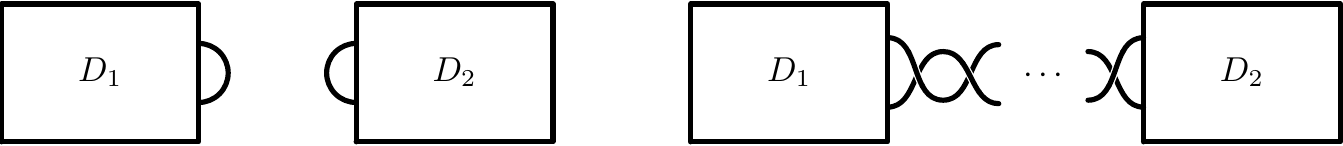}
\caption{Adding twists to a connected sum to create essential nugatory crossings.} \label{f-twist-connect}
\end{figure}

The reason \eqref{e-hard}  fails in general for connected sums of links in thickened surfaces is due to the use of non-minimal diagrams in forming the connected sum. However, if one restricts the connected sum operation to minimal crossing diagrams, then one gets a plausible generalization:

\begin{conjecture}\label{c-connected-sum}
Suppose $L_1 \subset \Sigma_1 \times I$ and $L_2 \subset \Sigma_2 \times I$ are links in thickened surfaces with minimal crossing representatives $D_1, D_2$, respectively. Then any link $L$ in the thickening of $\Sigma_1\# \Sigma_2$ arising as a connected sum of $D_1$ and $D_2$ satisfies
$$c(L) = c(L_1) + c(L_2).$$
\end{conjecture}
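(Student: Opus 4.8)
The inequality $c(L) \le c(L_1)+c(L_2)$ is immediate: since $D_1$ and $D_2$ are minimal crossing diagrams, the connected sum $D=D_1\#D_2$ represents $L$ and has $c(D)=c(D_1)+c(D_2)=c(L_1)+c(L_2)$ crossings. The content of the conjecture is therefore the reverse inequality $c(L)\ge c(L_1)+c(L_2)$, and the plan is to bound $c(E)$ from below for an arbitrary diagram $E$ of $L$ on $\Sigma_1\#\Sigma_2$. The naive geometric approach would locate the connect-sum $2$-sphere of $L$ inside $E$, split $E$ along it into diagrams $E_1$ for $L_1$ and $E_2$ for $L_2$, and conclude $c(E)=c(E_1)+c(E_2)\ge c(L_1)+c(L_2)$; the difficulty is that nothing forces this sphere to be carried by $E$ in a crossing-respecting way. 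Indeed, the very dependence of the connected sum on the diagrams $D_1,D_2$ emphasized above warns that decompositions of links in thickened surfaces are delicate, so one cannot argue this way without a prime-decomposition theory that is not yet available.

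The realistic route is via a link invariant $\Phi$ which (a) behaves multiplicatively, or additively on spans, under the relevant connected sum, and (b) has span bounding the crossing number from below. The skein bracket $[\,\cdot\,]_\Sigma$ supplies exactly such an invariant whenever the diagrams in play are skein adequate: by \Cref{cor-adequate} and \Cref{c-Tait} an adequate diagram realizes the crossing number, by \Cref{p-conn-sum-adeq} the connected sum of adequate diagrams is again adequate, and by \Cref{t-alt-adeq} reduced (weakly) alternating diagrams are adequate. Combining these recovers \Cref{c-connected}, so the first step in any attack is to record that \Cref{c-connected-sum} is \emph{already} a theorem for the class of links admitting minimal adequate diagrams --- in particular for adequate links, for weakly alternating links, and (as in the classical case \cite{Diao-2004}) for torus links. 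Note that one cannot shortcut this through the span equality of \Cref{t-altern-span}(ii), since by \Cref{ex-6knot} the connected sum of cellularly embedded weakly alternating diagrams may fail to be weakly alternating even while remaining adequate; it is the adequacy machinery, not the sharp span formula, that does the work.

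To go beyond this one would need either (i) an invariant of links in $\Sigma\times I$ whose span detects $c(L)$ for a class strictly larger than the adequate links --- no such invariant is presently known --- or (ii) a purely geometric lower bound in the spirit of Lackenby's work on classical links, replacing the Jones span by a Gromov-norm/guts or normal-surface estimate for link complements in thickened surfaces, which at best would yield $c(L)\ge \kappa\,(c(L_1)+c(L_2))$ for some universal $\kappa\in(0,1]$. I expect (ii) to be the main obstacle: already for $\Sigma_1=\Sigma_2=S^2$ the conjecture specializes to the long-standing classical additivity conjecture \eqref{e-hard}, so a full proof of \Cref{c-connected-sum} is at least as hard as that open problem and cannot be expected from the skein-theoretic machinery developed here alone; obtaining even a linear lower bound uniform over all thickened surfaces would already be a substantial advance.
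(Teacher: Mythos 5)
Your assessment is accurate and matches the paper exactly: the statement is posed as a conjecture, not a theorem, and the paper offers no proof --- only the observations you reproduce, namely that minimality of $D_1,D_2$ gives $c(L)\le c(L_1)+c(L_2)$ immediately and that \Cref{c-connected} (via \Cref{p-conn-sum-adeq}, \Cref{c-Tait}, and \Cref{t-alt-adeq}) settles the case of adequate (hence alternating and weakly alternating) links. Since the general case already contains the open classical additivity conjecture \eqref{e-hard} when $\Sigma_1=\Sigma_2=S^2$, your conclusion that no complete proof is available from the skein machinery is the correct one, and nothing further is claimed in the paper.
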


Note that the assumption that $D_1,D_2$ are minimal crossing representative
implies immediately that $$c(L) \leq c(L_1)+c(L_2).$$
In fact, the inequality may fail without that assumption. This is related to the fact that crossing number is not additive under connected sum for virtual knots. For example, the Kishino knot is the connected sum of two virtual unknots.
As evidence, notice that \Cref{c-connected} confirms that the conjecture is true if $L_1$ and $L_2$ are adequate links in thickened surfaces. In particular, it holds for alternating and weakly alternating links.

\section{Proofs of Theorems \ref{t-adequate-min-cross}, \ref{t-alt-adeq}, and \ref{t-altern-span}}
\subsection{Proof of \Cref{t-adequate-min-cross}}
\label{s-proof-adequate-min}

Given a link diagram $D$ on $\Sigma$ and positive integer $r$, the
{\bf $r$-th parallel} of $D$ is the link diagram $D^r$ on $\Sigma$ in which each link component of $D$ is replaced by $r$ parallel copies, with each one repeating the same ``over'' and ``under'' behavior of the original component.

\begin{lemma}
If $D$ is  $A$-adequate, then $D^r$ is also $A$-adequate. If $D$ is $B$-adequate, then $D^r$ is also  $B$-adequate.
\end{lemma}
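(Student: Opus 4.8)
The plan is to verify $A$-adequacy of $D^r$ crossing by crossing. By \Cref{p-alt-adeq} and the trichotomy in its proof, a single-crossing smoothing change from $S_A$ that is of type (ii) or (iii) never violates $A$-adequacy, while a type-(i) change (one state loop splitting into two) violates it only if at least one of the two resulting loops is contractible; so I would reduce the lemma to showing that, for every crossing $y$ of $D^r$, any type-(i) split of $S_A(D^r)$ at $y$ yields two non-contractible loops. The first ingredient I would set up is a description of $S_A(D^r)$: near a crossing $x$ of $D$ the diagram $D^r$ is the $r$-cable of $x$, a block of $r^2$ crossings, and smoothing all of these in the $A$-direction produces the $r$-cable of the $A$-smoothing of $x$ (namely $r$ parallel copies of each of its two arcs) with no closed components --- a purely local statement about tangles that can be checked directly, e.g.\ by induction on $r$ in the Temperley--Lieb calculus. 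Gluing these local models over the crossings of $D$ then shows that $S_A(D^r)$ is obtained from $S_A(D)$ by replacing each state loop $\gamma$ with $r$ parallel push-offs $\gamma^{(1)},\dots,\gamma^{(r)}$, with no loops gained or lost; the same local analysis also shows that re-smoothing a single crossing of such a block changes its tangle by one local reconnection and still creates no closed component inside the block.

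Next I would fix a crossing $y$ of $D^r$, lying in the block over a crossing $x$ of $D$, and consider the two state loops $L,L'$ of $S_A(D^r)$ running along the two arcs of the $A$-smoothing of $y$. If $L\ne L'$, switching $y$ merges them (type (ii)) and there is nothing to check. Otherwise $L=L'$, which forces the two arcs of the $A$-smoothing of $x$ to belong to one and the same state loop $\gamma$ of $S_A(D)$ and forces $L=L'=\gamma^{(k)}$ for a single push-off, so that switching $y$ affects only $\gamma^{(k)}$. I would then pick a disk $B\subset\Sigma$ whose interior contains $x$, the block of $D^r$ over $x$, and no other crossing of $D$. Outside $B$, $\gamma^{(k)}$ is a pair of arcs that are parallel push-offs of the two arcs $P_1,P_2$ of $\gamma\cap(\Sigma\setminus B)$, and inside $B$ (by the block computation above) switching $y$ rejoins the four endpoints of those arcs on $\partial B$ by two disjoint arcs in $B$, producing nothing else inside $B$.

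From here the conclusion is short. If the new pairing keeps $\gamma^{(k)}$ connected, the smoothing change is type (iii) and harmless; otherwise $\gamma^{(k)}$ splits into loops $C'=(\text{a push-off of }P_1)\cup(\text{an arc in }B)$ and $C''=(\text{a push-off of }P_2)\cup(\text{an arc in }B)$. The same new pairing --- the four endpoints lie on $\partial B$ in the same cyclic order for $\gamma$ as for $\gamma^{(k)}$ --- performed on $\gamma$ at $x$ splits $\gamma$ into loops $\gamma'=P_1\cup(\text{an arc in }B)$ and $\gamma''=P_2\cup(\text{an arc in }B)$. Hence $C'$ is freely homotopic to $\gamma'$ and $C''$ to $\gamma''$, since a push-off of an arc is homotopic to it and two arcs in a disk with the same endpoints are homotopic rel endpoints. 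Because $D$ is $A$-adequate and switching $x$ splits $\gamma$, neither $\gamma'$ nor $\gamma''$ is contractible, so neither is $C'$ or $C''$ --- which is exactly what was required. This would give $A$-adequacy of $D^r$; the $B$-adequacy statement follows by the mirror argument, interchanging the roles of $A$ and $B$ throughout.

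I expect the main obstacle to be the local cabling analysis of the first step --- correctly identifying the pure $A$-state on an $r$-cable block and the effect of re-smoothing one of its crossings --- together with the accompanying bookkeeping in the second step (which push-off of $\gamma$ is met at $y$, and how its outside arcs are rejoined). Once those local pictures are nailed down, the homotopy step and the reduction through \Cref{p-alt-adeq} should be routine.
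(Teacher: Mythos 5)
Your proposal is correct and takes essentially the same route as the paper's proof: identify $S_A(D^r)$ with the $r$-parallel of $S_A(D)$, reduce any potential adequacy-violating split at a crossing of $D^r$ to a corresponding split of a self-abutting cycle of $S_A(D)$ at the underlying crossing of $D$, and contradict the $A$-adequacy of $D$. Your disk-and-cyclic-order bookkeeping simply spells out the correspondence that the paper summarizes with its brief ``innermost cycle'' remark.
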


\begin{proof}
Let $S_{A}(D)$ and $S_{A}(D^r)$ be the pure $A$-smoothings of $D$ and the pure $A$-smoothings of $D^r$, respectively. It is straightforward to check that $S_{A}(D^r)$ is the $r$-parallel of $S_{A}(D)$.

Suppose $D^r$ is not  $A$-adequate. Then there is a state $S'$ obtained by switching one $A$-smoothing in $S_{A}(D^r)$ to a $B$-smoothing, such that $t(S_{A}(D^r))<t(S')$, and
$\wh{S}_{A}(D^r)=\wh{S'}$.
In the terminology of the proof of \Cref{p-alt-adeq}, that can only happen for a smoothing change of type (i). More specifically, when the smoothing change involves one of the innermost cycles in $S_{A}(D^r)$ which is self-abutting and which, when split, creates a new trivial cycle in $S'$. That is only possible if there is a self-abutting cycle in $S_{A}(D)$ which, when split, creates a new trivial cycle. Since $D$ is  $A$-adequate, this cannot happen.

An analogous argument proves the statement for  $B$-adequate diagrams.
\end{proof}

\noindent{\it Proof of \Cref{t-adequate-min-cross}.}
\noindent (i) Since
$$c(D)-w(D)=c_{+}(D)+c_{-}(D)-(c_{+}(D)-c_{-}(D))= 2c_-(D),$$
we will prove that
$$c(D)-w(D)\leq c(E)-w(E).$$
Our argument is an adaptation of Stong's proof \cite{Stong-1994} (cf., Theorem 5.13 \cite{Lickorish}).

Let $L_1, \ldots, L_m$ be the components of $L$ and let $D_i$ and $E_i$ be the subdiagrams of $D$ and $E$ corresponding to $L_i$. For each $i =1,\ldots, m,$ choose non-negative integers $\mu_i$ and $\nu_i$ such that $w(D_i)+\mu_i=w(E_i)+\nu_i$. Let $D'$ be composed of components $D_1',\ldots, D_m'$, where each
$D'_i$ is obtained from $D_i$ by adding $\mu_i$ positive kinks to it. (These kinks do not cross with other components).
Similarly, let $E'$ be composed of components $E_1',\ldots, E_m'$, where each $E'_i$ is obtained from $E_i$ by adding $\nu_i$ positive kinks to it. Notice that $D'$ is still  $A$-adequate.

The writhes of the individual components satisfy:
$$w(D'_{i})=w(D_i)+\mu_i=w(E_i)+\nu_i=w(E'_{i}).$$
Further, the sum of the signs of the crossings of $D_i'\cap D_j'$ coincides with the sum of the signs of the crossings of $E_i'\cap E_j'$, since both are  equal to the linking number of $L_i$ and $L_j$. Thus $w(D')=w(E')$.

For any $r$, consider the $r$-th parallels $(D')^r$ and $(E')^r$ now. Then $w((D')^r)=r^2 w(D')$, because  each crossing of $D'$ corresponds to $r^2$ crossings in $(D')^r$ of the same sign. The diagrams $(D')^r$ and $(E')^r$, are equivalent and have the same writhe, thus their  Kauffman brackets must be equal. In particular, we have $d_{\max}([ (D')^r]_\Sigma)=d_{\max}([ (E')^r ]_\Sigma)$. \Cref{p-dminmax} implies now that
\begin{eqnarray*}
d_{\max}([(D')^r ]_\Sigma)&=& \left(c(D)+\sum_{i=1}^m \mu_i\right)r^2+2\left(t(S_{A}(D))+\sum_{i=1}^m \mu_i\right)r,\\
d_{\max}([(E')^r ]_\Sigma)&\leq& \left(c(E)+\sum_{i=1}^m \nu_i\right)r^2+2\left(t(S_A(E))+\sum_{i=1}^m \nu_i \right)r.
\end{eqnarray*}
Since this is true for all $r$, by comparing coefficients of the $r^2$ terms,  we find that:
\begin{equation}\label{eqn:compare}
 c(D)+\sum_{i=1}^m \mu_i \leq c(E)+\sum_{i=1}^m \nu_i.
 \end{equation}
Subtracting $\sum_{i=1}^m (\mu_i +w(D_i))= \sum_{i=1}^m (\nu_i + w(E_i))$ from both sides of \eqref{eqn:compare}, we get that
\begin{equation}\label{eqn:new}
c(D)-\sum_{i=1}^m w(D_i)\leq c(E)-\sum_{i=1}^m w(E_i).
\end{equation}
Subtracting the total linking number of $L$ from both sides of \eqref{eqn:new} gives the desired inequality.

\noindent The proof of (ii) is analogous. One adds negative kinks to $D$ and $E$ in this case.
\qed

%
\subsection{Proof of \Cref{t-alt-adeq}}
\label{s-proof-alt-adeq}

A link diagram $D$ on $\Sigma$ is {\bf alternable} if it can be made alternating by inverting some of its crossings. Every classical link diagram is alternable, but the same is not true for link diagrams in arbitrary surfaces. For example, the knot diagram in the torus in \Cref{f-2.1} is not alternable.

\begin{figure}[!ht]
\centering
\includegraphics[height=30mm]{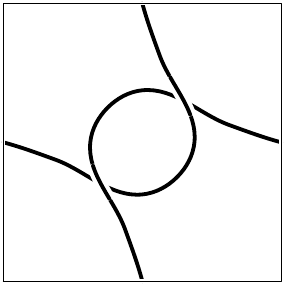}
\caption{A knot diagram in the torus which is not alternable.} \label{f-2.1}
\end{figure}

A link diagram $D$ on $\Sigma$ is {\bf checkerboard colorable} if the components of $\Sigma\sm D$ can be colored by two colors such that any two components of $\Sigma\sm D$ that share an edge have opposite colors.

\begin{proposition}\label{p-alter-cc}
Any minimal embedding $D$ on $\Sigma$ is alternable if and only if it is checkerboard colorable.
\end{proposition}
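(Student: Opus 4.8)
The plan is to argue entirely in terms of the underlying shadow of $D$ — the $4$-valent graph obtained by forgetting all over/under information — since both alternability and checkerboard colorability depend only on it. I will use the standard fact that $D$ is checkerboard colorable if and only if the region adjacency graph of $D$ (one vertex per component of $\Sigma \sm D$, one edge per edge of $D$, joining the two regions it flanks) is bipartite, equivalently the mod-$2$ class of $D$ vanishes in $H_1(\Sigma;\Z/2)$.

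For ``checkerboard colorable $\Rightarrow$ alternable'' I would fix a black/white coloring of the components of $\Sigma \sm D$ and define a crossing assignment by the rule: at each crossing the overstrand is the strand whose counterclockwise quarter-turn sweeps across the two black corners (this is well defined because opposite corners at a crossing share a color). Traversing a component of $D$ and recording which color lies immediately to the left, a computation local to a single crossing shows that if our strand is the overstrand at a crossing then it is the understrand at the next crossing along that component; hence the assignment is alternating, so $D$ is alternable. (This is the classical correspondence between checkerboard colorings and alternating assignments.) Minimal embeddedness plays no role here.

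For ``alternable $\Rightarrow$ checkerboard colorable'' I may assume $D$ is itself alternating. At each crossing call the two opposite corners swept when the overstrand is rotated counterclockwise to the understrand the $A$-corners, and the other two the $B$-corners; these alternate cyclically around the crossing, so two adjacent regions always sit in corners of opposite type at any crossing they share. Reading the local computation of the previous paragraph backwards, the alternating hypothesis forces that along any edge $e$ of $D$ the region on a fixed side of $e$ is an $A$-corner at one endpoint of $e$ exactly when it is an $A$-corner at the other endpoint. Hence the attribute of being an $A$-corner (versus a $B$-corner) is locally constant — and therefore constant — along each boundary loop of each region, and it is opposite for adjacent regions. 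If this attribute were the same along all boundary loops of every region, then coloring the $A$-regions black and the $B$-regions white would be a checkerboard coloring, and we would be done.

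The remaining and essential point — the one place minimal embeddedness enters — is to exclude a region $R$ that is an $A$-corner along one of its boundary loops but a $B$-corner along another. Such an $R$ cannot be a disk, and from the attribute discrepancy between two of its boundary loops one produces a simple closed curve lying in $\Sigma \sm D$ (running through $R$ from near one of the loops toward the other and closing up through the regions adjacent to them) whose intersection pattern with $D$ forces it to be non-separating in $\Sigma$ — contradicting the fact that a minimally embedded diagram has no non-separating loops in its complement. I expect the main obstacle to be making this last step precise: choosing the witnessing curve, closing it up correctly, and bookkeeping the parities so that the alternating/attribute data is converted into non-separability. Everything else reduces to routine local analysis at the crossings.
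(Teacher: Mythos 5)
Your first direction (checkerboard colorable $\Rightarrow$ alternable) is fine: it is the standard local computation at a crossing and, as you say, needs no minimality. Your local analysis in the other direction is also correct as far as it goes, and in the cellularly embedded case it already finishes the proof (each region has a single boundary walk, so your ``$A$-corner versus $B$-corner'' attribute gives a well-defined coloring); this is in effect a direct proof of the lemma of Kamada that the paper cites for exactly that case. The genuine gap is in the step you yourself flagged: the claim that minimal embeddedness excludes a region whose boundary loops carry different attributes, established by producing a non-separating loop in $\Sigma\sm D$. First, that claim is simply false for disconnected diagrams: put reduced alternating knot diagrams $D_1,D_2$ in the two handles of a genus two surface, chosen so that the canonical $A$/$B$ colorings disagree along the region separating them. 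The diagram $D_1\cup D_2$ is minimally embedded and is checkerboard colorable, but only after swapping the colors coming from one of the two components; the canonical attribute coloring itself does exhibit the discrepancy you are trying to rule out. So any proof along your lines must allow recoloring of pieces of the diagram, not just exclude the discrepancy. Second, even for connected diagrams the proposed mechanism does not produce the required contradiction: a loop that ``closes up through the regions adjacent to'' the two boundary loops must cross $D$, hence is not a loop in the complement and says nothing about minimality, whereas loops that genuinely lie in $\Sigma\sm D$ (for instance the boundary loops of the offending region) are only forced by minimality to be \emph{separating} --- which is precisely what minimality permits, not a contradiction.

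What is actually true, and what the paper exploits, is different. For a \emph{connected} diagram on a closed surface, minimal embedding is equivalent to all complementary regions being disks (a region with two boundary circles would have the connected diagram entirely on one side of a separating one, contradicting minimality of genus, and a region of positive genus would contain a loop non-separating in $\Sigma$); so in the connected case the discrepancy never arises and your local argument suffices. In general the paper caps off boundary components, and whenever some region is not a disk it finds an essential simple loop $\alpha$ there; minimality forces $\alpha$ to be separating, so $\Sigma=\Sigma_1\#\Sigma_2$ and $D$ decomposes as a \emph{disjoint union} of diagrams on the two summands, and one inducts down to the cellular case. It is exactly this splitting that legitimizes flipping the coloring on one piece, which is the move your argument is missing (compare also \Cref{lem-24}, where the same issue is repaired by adding loops). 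To fix your write-up, replace the ``non-separating witness curve'' step by this connected-sum induction, or treat connected diagrams by the minimal-equals-cellular observation and then handle disconnected diagrams by splitting along separating curves and recoloring components.
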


\begin{proof}
Observe that filling the boundaries of $\Sigma$ with disks does not affect alternability or checkerboard colorability.Likewise, removing disks from $\Sigma\sm D$ also does not affect alternability or checkerboard colorability. This  has two consequences:

(a)  It is enough to prove this statement for surfaces $\Sigma$ with all boundary components capped, i.e., for closed surfaces.

(b) Since Kamada proved that if a diagram $D$ is a deformation retract of $\Sigma$ then it is alternable if and only if it is checkerboard colorable, \cite[Lemma 7]{Kamada-2002},
our statement holds for cellularly embedded diagrams.

Our strategy is to reduce the proof to this case of cellular embeddings. Suppose that $C$ is a non-disk component of $\Sigma \sm D$. Then it contains a non-contractible simple closed loop $\alpha$. Let $\Sigma'$ be obtained by cutting  $\Sigma$ along $\alpha$ and by capping the boundary components. The loop $\alpha$ must be separating $\Sigma$, since otherwise $D\hookrightarrow \Sigma'$ would be a lower genus embedding of $D.$ Observe now that since $\Sigma$ is a connected sum of two surfaces $\Sigma_1\# \Sigma_2$, where $\Sigma_1\cup \Sigma_2=\Sigma'$ and $D$ is a disjoint union of $D\cap \Sigma_1$ and of $D\cap \Sigma_2$, it is enough to prove that $D\subset \Sigma_i$ is checkerboard colored for $i=1,2.$

By repeating this process as long as possible, we reduce the statement to cellularly embedded diagrams, which is covered by (b) above.
\end{proof}

\begin{lemma}\label{lem-24}
Any alternable diagram can be extended by disjoint simple closed loops to a checkerboard colorable one.
\end{lemma}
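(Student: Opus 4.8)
The statement is: any alternable diagram $D$ on $\Sigma$ can be extended by finitely many disjoint simple closed loops (disjoint from $D$ and from each other) to a checkerboard colorable diagram. The plan is to first reduce to the case where $D$ is minimally embedded, since adding loops does not change alternability, and then exploit \Cref{p-alter-cc}, which tells us that a minimally embedded alternable diagram \emph{is} checkerboard colorable. The subtlety is that after adding loops the diagram may fail to be minimally embedded, so the extra loops we add are precisely the tool for ``filling in'' the non-disk complementary regions, after which the enlarged diagram is cellularly embedded and hence (by Kamada's lemma, via the proof of \Cref{p-alter-cc}) checkerboard colorable.

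\textbf{Step 1: reduce to the minimally embedded case.} Let $\Sigma' \subseteq \Sigma$ be a minimal-genus subsurface carrying $D$, so that $D \subset \Sigma'$ is minimally embedded; the complement $\Sigma \setminus \Sigma'$ is a union of subsurfaces attached along loops in $\Sigma \setminus D$. If I can extend $D$ inside $\Sigma'$ to a checkerboard colorable diagram $D''$ and then add, for each component of $\Sigma \setminus \Sigma'$, one loop parallel to $\partial \Sigma'$ to account for the extra genus, I get a diagram on all of $\Sigma$. So the heart of the matter is the minimally embedded case.

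\textbf{Step 2: add loops to make the complement cellular.} Assume $D \subset \Sigma$ is minimally embedded (and WLOG $\Sigma$ closed, capping boundary as in \Cref{p-alter-cc}). Alternability is preserved under adding disjoint loops, so we may freely enlarge $D$. Whenever a component $C$ of $\Sigma \setminus D$ is not a disk, it contains an essential simple closed loop $\alpha$ in $\Sigma$; as in the proof of \Cref{p-alter-cc}, minimal embedding forces $\alpha$ to be separating. Add $\alpha$ to the diagram. This operation strictly decreases the total genus appearing in the complementary regions (equivalently, cuts $\Sigma$ along $\alpha$ into pieces each carrying part of $D$ minimally), so after finitely many steps every complementary region is a disk, i.e., the enlarged diagram $\widetilde D = D \cup (\text{finitely many disjoint separating loops})$ is cellularly embedded. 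One must check that the $\alpha$'s can be chosen pairwise disjoint and disjoint from $D$ — this is automatic since each is taken inside a complementary region of the current diagram, and subsequent loops live in regions that do not meet earlier ones.

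\textbf{Step 3: conclude via Kamada.} The enlarged diagram $\widetilde D$ is a deformation retract of its ambient closed surface, so it is cellularly embedded, and it is still alternable (we only added loops). By \cite[Lemma 7]{Kamada-2002} — invoked exactly as in part (b) of the proof of \Cref{p-alter-cc} — a cellularly embedded alternable diagram is checkerboard colorable. Hence $\widetilde D$ is checkerboard colorable, which is the desired conclusion.

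\textbf{Main obstacle.} The one point requiring care is Step 2: showing the process terminates, i.e., that adding a separating essential loop in a non-disk complementary region genuinely simplifies the complement. The clean way is a genus/Euler-characteristic argument — cutting along $\alpha$ and capping replaces $\Sigma$ by surfaces of strictly smaller total genus, each still carrying its portion of the diagram minimally — mirroring the ``repeat this process as long as possible'' argument already used in \Cref{p-alter-cc}. Everything else is routine, since alternability is manifestly stable under the addition of disjoint simple closed loops.
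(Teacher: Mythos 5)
Your Step 2 cannot work: it is in general impossible to make a diagram cellularly embedded by adding disjoint simple closed loops, so the route ``achieve a cellular embedding, then invoke Kamada'' breaks down. If $D$ is minimally embedded, every complementary region is planar (a region with genus would contain two curves meeting once, hence a loop non-separating in $\Sigma$), so a non-disk region $C$ has $\chi(C)\leq 0$; adding disjoint circles inside $C$ cuts it into pieces whose Euler characteristics sum to $\chi(C)$, whereas if all the pieces were disks the sum would be positive. Concretely, an annular region remains a union of annuli no matter how many circles you insert (see the minimally but not cellularly embedded alternating diagram of \Cref{f-minimaleg}). For the same reason your termination claim that each added loop ``strictly decreases the total genus appearing in the complementary regions'' is empty: for a minimally embedded diagram that total genus is already zero, and the process never ends in disks. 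The underlying misconception is that checkerboard colorability requires disk regions; it does not, and Kamada's lemma should be applied not to $\Sigma$ but to a ribbon neighborhood $N_D$, of which $D$ genuinely is a deformation retract. Note also that once one is reduced to a minimally embedded diagram, \Cref{p-alter-cc} already gives checkerboard colorability with no added loops, so Step 2 is superfluous even on its own terms; the loops are needed only for the gluing issue you relegate to Step 1.

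That gluing issue is exactly where the content of the lemma lies, and your Step 1 leaves it unjustified. The paper's argument colors $N_D\setminus D$ using \cite[Lemma 7]{Kamada-2002} (since $D$ is alternable), and observes that the only obstruction to extending the coloring over $\Sigma$ is a component $C$ of $\Sigma\setminus {\rm int}\, N_D$ whose boundary circles abut regions of different colors (such a $C$ would merge a white and a black region of $N_D\setminus D$ into one region of $\Sigma\setminus D$). This is repaired by adding, inside $C$, loops parallel to precisely the white boundary circles: the resulting annuli are colored white and the core of $C$ black. Your prescription of ``one loop parallel to $\partial\Sigma'$ for each component'' does not resolve such a conflict when a component meets boundary circles of both colors, and you never verify that the coloring of $D''$ on $\Sigma'$ survives regluing the complementary pieces. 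So the proposal has a genuine gap at its central step; the lemma is proved by color-extension over a neighborhood of $D$, not by achieving a cellular embedding.
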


\begin{proof}
The surface $N_D\subset \Sigma$, being a regular neighborhood of $D$, is checkerboard colorable by the earlier mentioned result of Kamada,  \cite[Lemma 7]{Kamada-2002}. The only reason that coloring does not extend to $D\subset \Sigma$ is that some connected components $C$ of $\Sigma \sm {\rm int}\, N_D$ may have multiple connected components of their boundary whose neighborhoods are colored differently. However, that issue can be resolved by adding simple closed loops around those boundary components of $C$ which are white.
\end{proof}

\noindent{\it Proof of \Cref{t-alt-adeq}:} Let $D$ be alternating diagram without removable crossings. By \Cref{lem-24}, by adding disjoint simple closed loops to $D$ we obtain a diagram $D'$ which is alternating and checkerboard colorable. Hence, it is enough to prove that $D'$ is adequate. Let us assume for simplicity of notation that $D$ is checkerboard colorable.

We will prove the $A$-adequacy of $D$ only, as the proof of $B$-adequacy is identical. Let $S$ be a state  with all $A$-smoothings except for a $B$-smoothing at a crossing $x$ of $D$. We will prove that $D$ is  $A$-adequate ``at $x$,'' meaning that $t(S)\leq t(S_A)$ or $\wh{S}\ne \wh{S}_A$ in $\sS(\Sigma \times I).$

As in the proof of \Cref{p-alt-adeq},
there are three cases, and to check adequacy, it is enough to check that
the conditions of \Cref{defn:h-adequate} hold in the first case, namely
when $|S| =|S_A|+1.$ Therefore, $S_A$ must contain a self-abutting cycle $C$, and in the transition from $S_A$ to $S$, the cycle $C$ splits into two cycles $C_1,C_2$ of $S$.
Since $D$ is alternating and checkerboard colorable, $S_A$ bounds a subsurface $\Sigma'$ of $\Sigma$ of a certain color, say white, which contains no crossings of $D$.

\begin{figure}[h]
   \centering
   \includegraphics[width=2in]{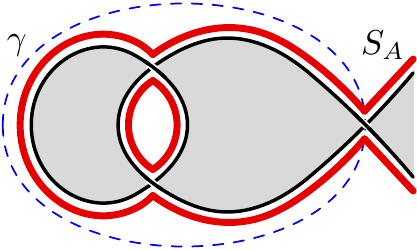}
   \caption{}
   \label{fig:alternating}
\end{figure}

We claim that neither $C_1$ nor $C_2$ is trivial. Indeed, if say $C_1$ were trivial, then there would be a loop $\gamma$ parallel to $C_1$ totally inside $\Sigma'$ except for a little neighborhood of $x$, in which it would cross $x$. Such a curve would imply that the crossing $x$ is removable,  (see for example \Cref{fig:alternating}), which is a contradiction. Therefore, neither $C_1$ nor $C_2$ is trivial, and it follows that $t(S)= t(S_A).$ Therefore, $D$ is $A$-adequate at $x$, and this completes the proof of the theorem.
\qed

%
\subsection{Link diagrams and shadows}
\label{s-shadow}

A {\bf link shadow} in $\Sigma$ is a $4$-valent graph in $\Sigma$, possibly with loop components. In other words, a shadow is a link diagram with crossing types ignored. For that reason we refer to shadow vertices as crossings and the components of any link realization of a shadow as its link components. (Not to be confused with connected components of a shadow.)

Some properties of link diagrams are entirely determined by its link shadow. For example, we will say that   a link shadow $D$ on $\Sigma$ is {\bf checkerboard colorable} if the components of $\Sigma\sm D$ can be colored by two colors such that any two components of $\Sigma\sm D$ that share an edge have opposite colors. Clearly, a link diagram is checkerboard colorable if and only if its link shadow is. Similarly, a link shadow is {\bf minimally embedded} if it does not lie in a subsurface of $\Sigma$ of smaller genus, and it is immediate that a link diagram on $\Sigma$ is minimally embedded if and only if its link shadow is.

Each shadow crossing has two smoothings, which cannot be differentiated as $A$- and $B$-type, as in the case of link diagrams. For that reason, for link shadows it is customary to place markers at the crossings indicating the smoothing as in \Cref{marker}.

\begin{figure}[!ht]
\centering
\includegraphics[height=23mm]{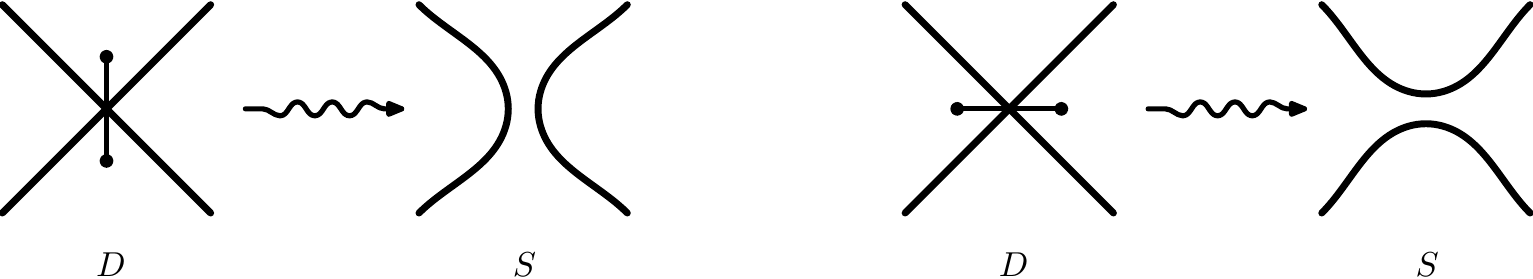}\quad
 \caption{Two types of markers for a state of a link shadow.}
\label{marker}
\end{figure}

Two consecutive crossings can have identical or opposite smoothings, see \Cref{alt-markers}. An {\bf alternating state} of a shadow is one with alternating crossing smoothings along all of its link components. In other words, a state is alternating if the smoothings at every pair of consecutive crossings are opposite.

Not all link shadows admit alternating smoothings, for example the shadow of the non-alternable knot in the torus in \Cref{f-2.1}. On the other hand, any link shadow of an alternating link diagram admits two alternating smoothings, namely the shadow smoothings coming from $S_A$ and $S_B$.

Given a state $S$ for a link shadow $D$, the dual state is denoted $S^{\vee}$ and has opposite smoothing to $S$ at each crossing of $D$. Notice that a state $S$ is alternating if and only if its dual state $S^{\vee}$ is alternating.

We say that a $2$-disk $D^2$ is $2$-cutting, or simply, {\bf cutting} a shadow $D$ if its boundary intersects $D$ transversely at two points (which are not crossings) and $D^2\cap D$ contains some but not all the crossings of $D$. A connected shadow $D$ is said to be {\bf strongly prime} if it has no cutting disk. More generally, a shadow $D$ is strongly prime if all of its connected components are.

\begin{figure}[!ht]
\centering
\includegraphics[height=20mm]{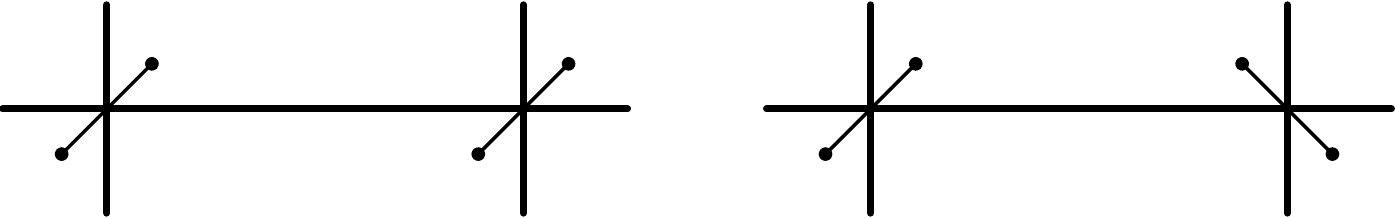}\quad
 \caption{Two consecutive crossings with identical markers (left) and opposite markers (right).}
\label{alt-markers}
\end{figure}

\begin{lemma}\label{l-strongly-prime}
Every crossing of every strongly prime shadow $D\subset \Sigma$ has at least one smoothing producing a shadow which is again strongly prime. If $D$ is connected, then the smoothing can be chosen so the resulting shadow is connected and strongly prime.
\end{lemma}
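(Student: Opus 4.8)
The plan is to argue by contradiction using a minimal cutting disk. Suppose $x$ is a crossing of a strongly prime shadow $D\subset\Sigma$ such that \emph{both} smoothings of $x$ produce a shadow that is no longer strongly prime. Let $S_1$ and $S_2$ denote the two shadows obtained from $D$ by smoothing $x$ in each of the two ways, and let $E_j\subset\Sigma$ be a cutting disk for $S_j$, so that $\p E_j$ meets $S_j$ transversely in two non-crossing points and $E_j$ contains some but not all crossings of $S_j$ (equivalently of $D$, since smoothing $x$ does not change the crossing set away from $x$). Among all such cutting disks for $S_1$ and $S_2$, I would choose one, say $E_1$, whose boundary meets $D$ in the fewest points, and derive a contradiction by producing a cutting disk for $D$ itself.

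The key observation is that a cutting disk $E_j$ for $S_j$ interacts with $D$ in one of only a few combinatorial ways near $x$. If $x\notin E_j$ and $\p E_j$ misses the smoothing region of $x$ entirely, then $E_j$ is literally already a cutting disk for $D$ (its boundary still meets $D$ transversely in two non-crossing points, and it contains a proper nonempty subset of the crossings), contradicting strong primeness of $D$. So for \emph{each} $j$ the disk $E_j$ must either contain $x$ or have its boundary passing through the little disk where the smoothing of $x$ takes place; after an isotopy we may assume $\p E_j$ either encircles $x$ inside $E_j$ or separates the two smoothing strands so that the two arcs at $x$ lie on opposite sides of $\p E_j$. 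In the first situation $x$ itself lies in $E_1$; one then checks that $E_1$ is again a cutting disk for $D$ (it contains a proper nonempty set of crossings of $D$, now including $x$), the same contradiction. The remaining and genuinely interesting case is when both $E_1$ and $E_2$ run \emph{through} the crossing region of $x$: then $\p E_1\cup\p E_2$ together with the four local strands at $x$ can be cut and reglued to produce a new embedded disk whose boundary meets $D$ transversely in two points and which contains a proper nonempty subset of the crossings of $D$ --- essentially, one splices the two boundary curves along their passage through the neighborhood of $x$. This surgered disk is a cutting disk for $D$, contradicting strong primeness.

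For the second sentence of the lemma (the connected case), I would run the same argument but additionally track connectedness of the resulting shadow. When $D$ is connected, smoothing $x$ in one of the two ways either keeps the shadow connected or splits it into two pieces; if it splits, the other smoothing necessarily keeps it connected (the local picture at a crossing makes one smoothing ``separating'' and the other not, just as in case (iii) of the proof of \Cref{p-alt-adeq}). Choosing that connectivity-preserving smoothing and applying the first part --- or, if that smoothing fails strong primeness, feeding its cutting disk back into the surgery argument above --- yields a connected strongly prime shadow, because a cutting disk for a connected shadow can always be pushed off to produce a connected shadow on one side that is strictly simpler, and minimality forecloses that.

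The main obstacle I anticipate is the surgery step in the third case: making precise how two cutting disks $E_1,E_2$ that both pass through the neighborhood of the single crossing $x$ can be combined into a genuine embedded cutting disk for $D$, while controlling which crossings end up inside it (one must ensure the result contains \emph{some but not all} crossings, which requires a careful bookkeeping of how the crossings of $D$ are distributed relative to $\p E_1$ and $\p E_2$, possibly passing to the complementary disk on the sphere $\p E_j$ bounds or invoking minimality of $|\p E_1\cap D|$ to rule out degenerate configurations). Everything else is a routine case analysis of the local picture at $x$ together with the definition of a cutting disk.
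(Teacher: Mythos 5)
Your reduction step is sound and matches the paper's opening move: if a cutting disk for a smoothed shadow is disjoint from the crossing region, or contains it entirely, it is already a cutting disk for $D$, so both disks $E_1,E_2$ may be normalized so that the crossing lies on their boundaries with the corresponding smoothing tangential to $\p E_j$. The genuine gap is exactly the step you flag and postpone: the proposed splice of $\p E_1$ and $\p E_2$ at $x$ does not, as described, produce a cutting disk for $D$, and no amount of routine bookkeeping will make it do so. Locally at $x$ both circles and both strands of $D$ pass through the same point; any resolution of the intersection of the two circles there yields a curve meeting $D$ in four (or more) points rather than two, the result is generally not embedded because $\p E_1$ and $\p E_2$ may intersect at many other points, and there is no control over whether it bounds a disk containing some but not all crossings. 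The paper's argument in this case is of a different nature: it puts $\p B_1,\p B_2$ in general position, takes the component of $B_1\cap B_2$ containing the crossing $v$, bounded by arcs $\alpha_1\subset\p B_1$ and $\alpha_2\subset\p B_2$ running from $v$ to the next intersection point $w$, and uses a parity count (each $\p B_i\sm\{v\}$ meets $D$ twice, and exactly one local strand at $v$ enters the bigon) to reduce to two configurations (\Cref{f-sprime1}). In each, strong primeness of $D$ is applied to neighborhoods of the tangles cut off by these arcs to conclude those tangles are crossingless, and the contradiction is that $B_2$ (resp.\ $B_1$) then fails to contain some-but-not-all crossings of $D_2$ (resp.\ $D_1$), i.e.\ it was never a cutting disk. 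So the contradiction is reached against the disks for the smoothed shadows, not by manufacturing a cutting disk for $D$; the ``careful bookkeeping'' you defer is the entire content of the lemma.

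There is a second gap in your treatment of the connected case. You are right that if one smoothing disconnects a connected shadow then the other smoothing is connected, but your plan then either invokes the first part (which only guarantees that \emph{one} of the two smoothings is strongly prime, and that one may be the disconnected smoothing, since a disconnected shadow is strongly prime as soon as each component is) or proposes to ``feed the cutting disk back into the surgery argument,'' which is unavailable here: when the other smoothing is disconnected it may admit no cutting disk at all, so there is nothing to splice with. The paper gives a separate direct argument for this case: if the connected smoothing $D'$ had a cutting disk $B$ with $v\in\p B$, the disconnectedness of the other smoothing forces the strands of the two relevant tangles not to cross one another, strong primeness of $D$ then forces those tangles to be crossingless, and hence $B$ is not a cutting disk for $D'$. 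Your closing remark that ``a cutting disk for a connected shadow can always be pushed off \dots and minimality forecloses that'' is not an argument and does not substitute for this step.
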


For classical links, a proof of this statement can be found in \cite{Lickorish}. That proof relies on checkerboard colorability of the diagram, which is of course true for classical links. Below, we give a proof that does not require the shadow to be checkerboard colorable.

\begin{proof}
Without loss of generality we can assume that $D$ is connected. Assume now that the smoothings of a crossing $v$ in a strongly prime $D$ produce diagrams $D_1, D_2$ neither of which is strongly prime. Let $B_1,B_2$ be cutting disks for $D_1$ and $D_2.$  Since $D$ is strongly prime, we can assume that $v\in \p B_i$ for $i=1,2$. We can also assume that $\p B_1$ and $\p B_2$ are in transversal position. Let $C$ be the connected component of $B_1\cap B_2$ containing $v$, as in \Cref{f-sprime1} (left). The circles $\p B_1,$ $\p B_2$ are broken because they may intersect each other many times.

By modifying $B_1$ or $B_2$ slightly if necessary we can assume that $D$ does not contain the second intersection point, $w$, of $\p B_1\cap \p B_2$ in $C$.

Let $\alpha_1={\rm int}(C\cap \p B_1)$ and $\alpha_2={\rm int}(C \cap \p B_2)$. (Note that $v\not\in\alpha_1\cup\alpha_2$.) Since $D$ intersects $\p B_i-\{v\}$ twice, for $i=1,2,$ and $D$ intersects $\alpha_1\cup \alpha_2$ at an odd number of points, we have the following possibilities: \\
(1) $|D\cap \alpha_2|=1,$ $D\cap \alpha_1= \varnothing$.\\
(2) $|D\cap \alpha_2|=2,$ $|D\cap \alpha_1|= 1$.\\
(3) one of the two cases above with $\alpha_1$ interchanged with $\alpha_2.$ We will ignore it without loss of generality.

\begin{figure}[!ht]
\centering
\includegraphics[height=28mm]{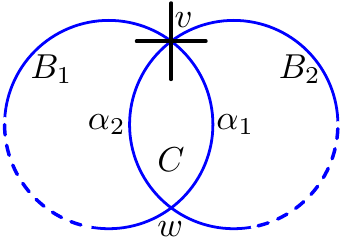}\quad
\includegraphics[height=28mm]{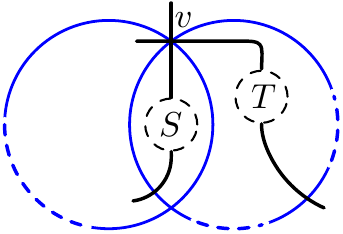}\quad
\includegraphics[height=28mm]{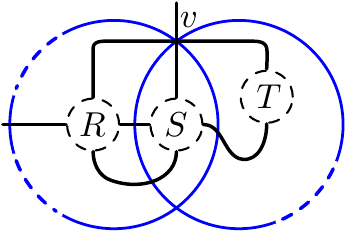}
\caption{}
\label{f-sprime1}
\end{figure}

In the first case, $D$ looks like in \Cref{f-sprime1} (center), where $S,T$ (in dashed circles) are shadow tangles.  In that case, since neighborhoods of $S, T$ are not cutting disks for $D$, the tangles $S,T$ are crossingless. That means that $B_2$ is not a cutting disk for $D_2$ --  a contradiction.

In the second case, $D$ looks like in \Cref{f-sprime1} (right), where $R,S,T$ are shadow tangles. Note that all crossings of $D$, other than $v$, are contained in $R, S$ or $T$, since otherwise a disk containing $v$, $R, S$ and $T$ but no other crossings of $D$ would be cutting for $D$. Note also that, as in the first case, $T$ is crossingless. That means that all crossings of $D_1$ are in $R$ and $S$. Hence, $B_1$ is not cutting for $D_1$ -- a contradiction.

Now assume that $D$ is connected. Then for one of the smoothings of $D$ at $v$ will be connected. Let $D'$ denote the connected shadow obtained from smoothing $D$, and assume the other smoothing is disconnected. We claim that $D'$ is strongly prime.

Assume to the contrary that $D'$ is not strongly prime. Then there is a cutting disk $B$ containing some but not all the crossings of $D'$ (see \Cref{f-newpic}). We can assume that $x\in \p B$ and that $D'$ is obtained by the smoothing of $x$ tangential to $\p B.$ However, since the other smoothing of $D$ at $v$ is disconnected, the strands from the tangles $R$ and $T$ cannot cross each other. The neighborhoods of $R,T$ give cutting disks for $D$ unless the tangles $R,T$ are crossingless, but then $B$ would not be a cutting disk for $D'$,  which is a contradiction.
\end{proof}

\begin{figure}[!ht]
\centering
\includegraphics[height=28mm]{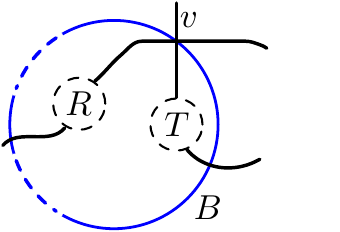}
\caption{A cutting disk for $D'$.}
\label{f-newpic}
\end{figure}

Suppose $D \subset \Sigma$ is a link shadow. Let $N_D$ denote a neighborhood of $D$ in $\Sigma$ small enough so that it is a ribbon surface retractible onto $D$. A \textbf{local checkerboard coloring} of $D$ is a checkerboard coloring of $D\subset N_D$. If one exists, we say that $D$ is \textbf{locally checkerboard colorable}. (The pair $(D, N_D)$ is the shadow of an abstract link diagram \cite{Kamada-2000}, or ALD for short. This condition is equivalent to saying that $(D, N_D)$ is the shadow of a checkerboard colorable ALD.)

Obviously, if $D \subset \Sigma$ is checkerboard colorable, then it is locally checkerboard colorable. The converse holds if $D\subset \Sigma$ is cellularly embedded, but in general, a shadow can be locally checkerboard colorable without being checkerboard colorable.

\begin{lemma}\label{alt-check}
Suppose $D \subset \Sigma$ is a link shadow. Then $D$ is locally checkerboard colorable if and only if it admits an alternating state.
\end{lemma}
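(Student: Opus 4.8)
The plan is to establish the equivalence by a direct geometric argument relating alternating states to checkerboard colorings of the ribbon surface $N_D$. For the forward direction, suppose $D$ admits an alternating state $S$. I would use $S$ (together with its dual $S^\vee$, which is also alternating) to construct a coloring of the complementary regions of $D$ inside $N_D$. Concretely, think of the smoothing markers of $S$ as chords placed at each crossing; an alternating state has the property that at every crossing the two ``$S$-side'' corners and the two ``$S^\vee$-side'' corners are consistently distinguishable, and one can propagate a black/white assignment across edges of $D$ by declaring that the regions touching the markers of $S$ get one color and those touching $S^\vee$ get the other. The alternating condition — that consecutive crossings along each component have opposite smoothings — is exactly what guarantees the coloring is well-defined as one walks along an edge of $D$ from one crossing to the next: the color of the region on a fixed side of the edge does not flip. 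Since $N_D$ deformation retracts onto $D$, this yields a checkerboard coloring of $D \subset N_D$, i.e. a local checkerboard coloring.

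For the converse, suppose $D$ is locally checkerboard colorable, with regions of $\Sigma$ restricted to $N_D$ colored black and white. At each crossing of $D$, the four corners alternate in color around the crossing (two black, two white, opposite pairs matching). Define a state $S$ by placing the smoothing marker at each crossing so that it connects the two black corners (say). I claim $S$ is alternating. Travelling along a component of $D$ from one crossing to the next along an edge $e$, the two regions adjacent to $e$ have fixed colors (one black, one white) along all of $e$; consequently the way the black corners sit relative to the strand flips between the two endpoints of $e$, forcing the markers at consecutive crossings to be opposite. Hence $S$ is alternating. This is essentially the standard correspondence between checkerboard colorings and alternating diagrams, here localized to $N_D$ and phrased for shadows.

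The main technical point — and the step I expect to require the most care — is verifying the well-definedness / consistency of the coloring in the forward direction around a closed loop in $N_D$, not just along a single edge. One must check that propagating colors around any cycle in the edge–region incidence structure of $D \subset N_D$ returns a consistent value. Since $N_D$ retracts to the $4$-valent graph $D$, it suffices to check consistency around the boundary of each complementary region of $D$ in $N_D$ and around each vertex; the vertex case is immediate from the local picture of an alternating smoothing, and the region case follows because each region's boundary meets an even number of crossing-corners of each color (this is where the alternating hypothesis is used globally rather than just locally). One can also invoke Kamada's result \cite[Lemma 7]{Kamada-2002}, already cited in the proof of \Cref{p-alter-cc}, applied to the deformation retract $D \subset N_D$: an abstract link diagram is checkerboard colorable iff its underlying diagram is alternable, and ``alternable'' for the shadow is precisely the existence of an alternating state. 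Framing the proof through Kamada's lemma would let me dispatch the consistency bookkeeping cleanly rather than re-deriving it by hand.
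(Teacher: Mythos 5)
Your proof is correct, and your converse direction is exactly the paper's: take the state whose smoothing at each crossing joins the regions of one fixed color; since the two regions flanking an edge keep their colors along it, the markers at consecutive crossings are forced to be opposite. In the forward direction the underlying local analysis is also the same, but the packaging differs: the paper caps off the boundary circles of $N_D$ with disks to obtain a surface $\wh{\Sigma}$ in which $D$ is cellularly embedded, and then colors $\wh{\Sigma}\sm D$ so that the cycles of $S$ bound disks of one color and the cycles of $S^\vee$ bound disks of the other, the alternating hypothesis being what makes every corner of a given face have a single type; your propagation argument is this same fact read off edge by edge. The consistency check you single out is lighter than you suggest: the color of a face is determined by the ($S$ versus $S^\vee$) type of any one of its corners, and the alternating condition says the two corners at the ends of any edge-side have the same type, so chaining this around a face boundary shows all corners of a face agree --- there is no residual holonomy to rule out, and no parity count (``even number of corners of each color'') is needed; adjacent faces automatically receive opposite colors because the two corners flanking an edge-end have opposite types, and corner-free faces (neighborhoods of crossingless circles) can be colored at will. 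Your alternative route through Kamada's Lemma 7 \cite{Kamada-2002} applied to $D\subset N_D$ is legitimate and consonant with the paper's use of that lemma in \Cref{p-alter-cc}, but it silently relies on the equivalence that a shadow admits an alternating state if and only if it is alternable; the direction you need (assign crossings so that the all-$A$ state is the given alternating state and verify the resulting diagram alternates) is precisely the same local marker computation, so the citation does not really spare you the bookkeeping, which is presumably why the paper argues directly instead.
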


\begin{proof}
If $D$ is locally checkerboard colorable, then let $S$ be the state whose  smoothings at each crossing join the white regions. Then $S$ is an alternating state.

Conversely, suppose $S$ is an alternating state. Let $\wh{\Sigma}$ be the surface obtained from $N_D$ by attaching disks to each of its boundary component. Then $D \subset \wh{\Sigma}$ is cellularly embedded. We can color $\wh{\Sigma} \sm D$ so that each cycle in $S$ bounds a black disk and each cycle in $S^\vee$ bounds a white disk. To see this, notice that at each smoothing of $S$, two local regions are joined. We can color the joined regions white and extend the coloring to the rest of $\wh{\Sigma} \sm D$. This determines a local checkerboard coloring of $D$.
\end{proof}

If $S$ and $S'$ are adjacent states on a shadow $D$ with $|S'|=|S|,$ then the transition from $S$ to $S'$ is called a \textbf{single cycle bifurcation}.

\begin{lemma}\label{cc-single}
A connected shadow $D$ is locally checkerboard colorable if and only if there is no single cycle bifurcation in its cube of resolutions.
\end{lemma}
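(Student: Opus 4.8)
The plan is to prove both implications by relating local checkerboard colorability to the existence of an alternating state (via \Cref{alt-check}) and then to relate alternating states to single cycle bifurcations. First I would prove the easier direction: suppose $D$ is not locally checkerboard colorable. By \Cref{alt-check}, $D$ admits no alternating state. I want to produce a single cycle bifurcation. The key observation is that for a connected shadow, one can pass between any two states by a sequence of single-crossing smoothing changes; starting from any state $S$ and repeatedly switching crossings to try to make consecutive smoothings alternate, one either reaches an alternating state (impossible) or gets stuck, and getting stuck forces a single cycle bifurcation. More carefully, I would argue that if every crossing-switch starting from $S$ were of type (i) or (ii) (splitting or merging), then one could propagate a consistent ``alternating'' choice along the shadow and, by connectedness, build an alternating state — contradiction. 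Hence some switch is of type (iii), i.e., a single cycle bifurcation.

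For the converse, suppose there is a single cycle bifurcation $S \to S'$ at a crossing $x$, so $|S'| = |S|$ and one cycle $C$ of $S$ rearranges into one cycle $C'$ of $S'$. As in the proof of \Cref{p-alt-adeq}, the two smoothings at $x$ can be realized by two simple closed curves $\alpha, \beta \subset \Sigma$ meeting transversely at $x$ only, with $C, C'$ representing $\pm(\alpha+\beta)$ and $\pm(\alpha-\beta)$ in homology and algebraic intersection number $\alpha \cdot \beta = \pm 1$. The strand of $D$ through $x$ passes through the cycle $C$ in a way that, locally at $x$, the cycle $C$ is ``self-abutting'' but the two arcs of $C$ at $x$ lie on the \emph{same side} of $D$ near $x$ in an obstructing way. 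The point is that a single cycle bifurcation at $x$ means that in the state $S$, the smoothing at $x$ joins two local regions that are actually the \emph{same} region of $N_D \setminus D$ traversed incoherently — which is precisely the obstruction to local checkerboard colorability: if $D$ were locally checkerboard colorable, take the coloring; then by \Cref{alt-check} there is an alternating state, and one checks (cf.\ the argument in \Cref{alt-check}) that in any state coming from such a coloring, every crossing-switch changes $|S|$ by $\pm 1$, because the two local regions at each crossing are oppositely colored hence distinct. So local checkerboard colorability rules out single cycle bifurcations.

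The main obstacle I expect is making rigorous the claim that in a connected shadow, failure to ever encounter a single cycle bifurcation forces the existence of an alternating state. The cleanest way is probably this: define an auxiliary $\Z/2$-labelling problem on the crossings — a ``desired parity'' that must be consistent as one travels along each link component — and observe that a single cycle bifurcation at $x$ is exactly what happens when switching $x$ in a state $S$ fails to be a split-or-merge, which in turn happens exactly when the component strands force a parity conflict at $x$. One must be careful that ``no single cycle bifurcation anywhere in the cube of resolutions'' is a strong hypothesis — it quantifies over all states, not just those near a fixed one — and indeed this strength is what's needed: it lets us choose a convenient state (e.g.\ from a spanning structure on the connected shadow graph) and conclude that the parity labelling extends globally, giving an alternating state, hence by \Cref{alt-check} local checkerboard colorability. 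Connectedness of $D$ is used here to ensure the labelling on one component determines, or at least is compatible with, the labellings on all others so that no global obstruction arises beyond the per-crossing ones already excluded.
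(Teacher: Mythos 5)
Both directions of your argument have genuine gaps. For the direction ``locally checkerboard colorable $\Rightarrow$ no single cycle bifurcation,'' the lemma quantifies over \emph{every} state in the cube of resolutions, but your argument only addresses ``states coming from the coloring,'' i.e.\ essentially the alternating state supplied by \Cref{alt-check} and its dual; a single cycle bifurcation could a priori occur at a state far from these, and nothing you say excludes that. Moreover, the local reason you give is not correct: under a checkerboard coloring the two regions joined by a smoothing have the \emph{same} color, and in any case whether switching a crossing $x$ in a state $S$ is a split, a merge, or a single cycle bifurcation is determined by how the cycles of $S$ connect the four half-edges at $x$ globally, not by whether two local regions of $N_D \sm D$ are distinct. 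A correct argument here must be global; the paper simply quotes \cite[Proposition 5.11]{Karimi-2019}, which in effect shows that a single cycle bifurcation in any state produces a loop in $N_D$ meeting $D$ transversally an odd number of times, incompatible with a local checkerboard coloring.

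For the other direction, your plan hinges on the assertion that a single cycle bifurcation at $x$ ``happens exactly when the component strands force a parity conflict at $x$,'' and that the absence of such bifurcations lets a parity labelling propagate to a global alternating state. This is precisely the content of the hard half of the lemma, and you supply no mechanism for it: the nonexistence of an alternating state is a global odd-cycle obstruction in a constraint system along the link components, not something localized at a single crossing, so the claimed equivalence does not follow from the definitions, and the ``spanning structure'' step is left entirely unspecified. The paper fills exactly this hole by induction on $c(D)$: smooth one crossing $x$ so that the resulting shadow $D'$ is connected; if $D'$ is not locally checkerboard colorable, a single cycle bifurcation exists in a face of the cube by induction; if $D'$ is locally checkerboard colorable, take its alternating state from \Cref{alt-check} and observe that the coloring of $(N_{D'},D')$ extends over the band attached at $x$ unless the specific transition $S'^\vee \to S^\vee$ is a single cycle bifurcation. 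To rescue your approach you would need an analogous concrete step converting an odd obstruction cycle for the parity problem into an explicit state and crossing exhibiting a single cycle bifurcation; as written, the proposal assumes this rather than proving it.
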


\begin{proof}
For one implication, we apply \cite[Proposition 5.11]{Karimi-2019} to see that if $D$ is locally checkerboard colorable, then its cube of resolutions does not contain any single cycle bifurcations.

The other implication is proved by induction on the crossing number. To start, we verify it for 1-crossing shadows, which can be classified into the {\bf first type} $\ \diag{31}{0.26in} \ $ or the {\bf second type} $\ \diag{32}{0.34in}$. The shadows of the first type are locally checkerboard colorable and of the second type are not. The cubes of resolutions for these shadows are $\bullet \to\bullet$, and they have just one edge, which is a split/join for the shadow of the first type and a single cycle bifurcation for the shadow of the second type.

Now assume the lemma has been proved for all connected shadows with fewer than $n$ crossings. Let $D$ be a connected shadow with $n$ crossings. We will show that if $D$ is not locally checkerboard colorable, then there is a single cycle bifurcation in its cube of resolutions. Pick a crossing $x$ and let $D'$ be the diagram obtained by smoothing $D$ at $x$. (It does not matter which smoothing is chosen.)

Assume first that $D'$ is not locally checkerboard colorable. By induction, the cube of resolutions for $D'$ contains a single cycle bifurcation. Since the cube of resolutions of $D'$ is a face of the cube of resolutions of $D,$ the result follows.

On the other hand, if $D'$ is locally checkerboard colorable, then by \Cref{alt-check}, it admits an alternating state $S'$. We color $N_{D'}\sm D'$ consistently, so that the smoothings of $S'$ joins white regions. Let $S$ be a state of $D$ which coincides with $S'$, and  $S^\vee$ its dual state. Switching the smoothing of $x$ in $S^\vee$, we obtain $S'^\vee$,  considered as a state of $D$.

The ribbon surface $N_{D}$ is obtained by adding a 2-dimensional 1-handle (a band) to $N_{D'}$. Unless the transition from $S'^\vee$ to $S^\vee$ is a single cycle bifurcation, we can extend the coloring of $(N_{D'}, D')$ to $(N_{D}, D)$. Since $D$ is not locally checkerboard colorable, the transition from $S'^\vee$ to $S^\vee$ must be a single cycle bifurcation.
\end{proof}

Recall that $r(D)$ denotes the rank of the image of $i_* \colon H_1(D; \Z/2) \to H_1(\Sigma; \Z/2).$ Any connected shadow is homotopy equivalent to a bouquet of circles. If $D$ has $c(D)$ crossings, then $\chi(D)=-c(D)$. It follows that $0 \leq r(D) \leq c(D) + 1$ for connected shadows with $c(D)$ crossings.

\begin{proposition}\label{p-non-alt-ineq}
Let $D$ be a link shadow in $\Sigma$, (not necessarily connected).
\begin{itemize}
\item[(i)] If $S$ is a state of $D$, then  $$t(S)+t(S^\vee)\leq c(D)+2|D|-r(D). $$
\item[(ii)] If $D$ is not locally checkerboard colorable, then for any state $S$ of $D$, $$t(S)+t(S^\vee)< c(D)+2|D|-r(D).$$
\item[(iii)] If $D$ is strongly prime and $S$ is non-alternating, then $$t(S)+t(S^\vee)< c(D)+2|D|-r(D).$$
\end{itemize}
\end{proposition}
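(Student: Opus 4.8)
The plan is to reduce everything to the connected case and then argue by induction on the crossing number, using \Cref{l-strongly-prime} and \Cref{cc-single} as the two main tools. For part (i), I would first observe that $t(S) \le |S|$ always, so it suffices to bound $|S| + |S^\vee|$. For a connected shadow $D$ with $c(D)$ crossings, consider the surface $F_S$ obtained by gluing a disk along each cycle of $S$ to one side of the ribbon surface $N_D$ and a disk along each cycle of $S^\vee$ to the other side; this is a closed surface (possibly disconnected) built from $N_D$ plus $|S|+|S^\vee|$ disks, and a Euler-characteristic count gives $\chi(F_S) = \chi(N_D) + |S| + |S^\vee| = -c(D) + |S| + |S^\vee|$. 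Since each component of $F_S$ is a closed orientable surface, $\chi(F_S) \le 2 \cdot (\text{number of components})$; bounding the number of components and tracking how non-contractible cycles in $\Sigma$ force genus into $F_S$ (this is where $r(D)$ enters — the image of $H_1(D)$ in $H_1(\Sigma)$ cannot be killed off, so $F_S$ carries genus at least $r(D)/2$ in an appropriate sense) yields $|S| + |S^\vee| \le c(D) + 2|D| - r(D)$. Summing over connected components of a general $D$ gives (i); one must be slightly careful that $r(D) \le \sum r(D_i)$ so the inequality only improves.

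For part (ii), I would induct on $c(D)$, again reducing to $D$ connected. By \Cref{cc-single}, since $D$ is not locally checkerboard colorable, its cube of resolutions contains a single cycle bifurcation, i.e.\ adjacent states $T, T'$ with $|T| = |T'|$. Pick any crossing $x$ and smooth it to get $D'$. If $D'$ is still not locally checkerboard colorable, the inductive hypothesis applied to $D'$ (together with the fact that smoothing one crossing drops $c$ by one and changes $t(S) + t(S^\vee)$ in a controlled way — either $|S|$ goes up by one while $|S^\vee|$ stays, or vice versa, or both stay and one count strictly drops) propagates the strict inequality back to $D$. If $D'$ \emph{is} locally checkerboard colorable, then by \Cref{alt-check} it has an alternating state; the argument in the proof of \Cref{cc-single} shows that the band reattaching $N_{D'}$ into $N_D$ realizes a single cycle bifurcation, and at that crossing the transition between $S$ restricted appropriately and its partner satisfies $|S| = |S^\vee\text{-partner}|$, which means the slack in the Euler characteristic count of part (i) is not attained — concretely, $F_S$ acquires an extra handle or an extra component merges, forcing the strict inequality.

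For part (iii), the hypothesis that $S$ is non-alternating while $D$ is strongly prime is the heart of the matter. Here I would use \Cref{l-strongly-prime}: since $D$ is strongly prime and connected, some crossing $x$ has a smoothing producing a connected strongly prime shadow $D'$. Because $S$ is non-alternating, there exist two consecutive crossings of $D$ where $S$ has identical markers; I would choose $x$ to be one of these, smoothed in the direction dictated by $S$, so that in $D'$ the inherited state $S'$ (or its dual) gains a cycle "for free" — the identical-marker configuration is exactly what creates a nested cycle under smoothing. Then $t(S') + t((S')^\vee) = t(S) + t(S^\vee) + 1$ (the merge/split bookkeeping), while $c(D') = c(D) - 1$. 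Applying part (i) to the strongly prime $D'$ gives $t(S') + t((S')^\vee) \le c(D') + 2 - r(D')$; combining, $t(S) + t(S^\vee) \le c(D) + 2 - r(D') - 1 < c(D) + 2 - r(D')$, and one checks $r(D') \ge r(D)$ (smoothing a crossing of a connected shadow does not decrease the rank of the image in $H_1(\Sigma)$, since it corresponds to a surjection on $H_1$ with $\Z/2$ coefficients up to the cycle being destroyed, which cannot be non-trivial in homology for a strongly prime configuration — this needs a short separate lemma).

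The main obstacle I expect is the bookkeeping in the inductive step of (iii): one must verify that the crossing $x$ furnished by \Cref{l-strongly-prime} can be taken to be one of the two identical-marker crossings of the non-alternating state $S$ (or argue that if not, one can still extract a gain somewhere), and that smoothing it in the $S$-direction genuinely increases $t(S)+t(S^\vee)$ by exactly one rather than leaving it unchanged. This is a local picture analysis at a pair of consecutive crossings, and the subtlety is that whether a new cycle is contractible depends on the global surface, so one may need to combine this with the Euler-characteristic argument of (i) rather than tracking $t$ directly. The inequality $r(D') \ge r(D)$ under smoothing is the second point requiring care, since in principle a smoothing could destroy a homologically essential cycle; strong primeness is what rules this out, and making that precise is the other place where a short auxiliary argument is needed.
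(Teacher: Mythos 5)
Your part (i) does not work as written. The reduction ``it suffices to bound $|S|+|S^\vee|$'' is based on the inequality $|S|+|S^\vee|\le c(D)+2|D|-r(D)$, which is false: take $D\subset T^2$ to be the union of two essential simple closed curves meeting transversely at one point, viewed as a one-crossing shadow. Here $c(D)=1$, $|D|=1$, $r(D)=2$, and both smoothings are single essential curves, so $|S|+|S^\vee|=2>1=c(D)+2|D|-r(D)$, while $t(S)+t(S^\vee)=0$ as the proposition requires. The same example shows your surface $F_S$ need not be orientable (there $\chi(F_S)=-c(D)+|S|+|S^\vee|=1$), so the step ``each component of $F_S$ is a closed orientable surface'' also fails; this is exactly the single-cycle-bifurcation phenomenon governed by \Cref{cc-single}. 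The deficit $r(D)$ is absorbed not only by genus of $F_S$ but also by noncontractible state cycles, so the Euler-characteristic route can only be salvaged by keeping those in play: write $t(S)=|S|-n(S)$, where $n(S)$ is the number of noncontractible cycles, use $|S|+|S^\vee|=c(D)+2|D|-\dim_{\Z/2}H_1(F_S;\Z/2)$, and prove $r(D)\le\dim_{\Z/2}H_1(F_S;\Z/2)+n(S)+n(S^\vee)$ (the image of $H_1(D;\Z/2)$ in $H_1(\Sigma;\Z/2)$ is generated by lifts of $H_1(F_S;\Z/2)$ together with the classes of the state cycles). That repaired argument would be a genuinely different, non-inductive proof of (i); the paper instead inducts on $c(D)$ using the dichotomy that either $r(D')=r(D)$ or $t(S^\vee)\le t(S'^\vee)$. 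Your part (ii) is also not an argument at its crucial point: ``$F_S$ acquires an extra handle or an extra component merges'' is not established, whereas the needed fact (the paper's version) is that when $D'$ is locally checkerboard colorable the transition $S^\vee\to S'^\vee$ must be a single cycle bifurcation, forcing $t(S^\vee)=t(S'^\vee)$ and $r(D)=r(D')$, so strictness comes simply from $c(D')=c(D)-1$.

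Part (iii) is where the route you chose genuinely fails to close, and it is different from the paper's. First, \Cref{l-strongly-prime} only guarantees that at each crossing \emph{some} smoothing preserves connectivity and strong primeness; you need the $S$-directed smoothing at one of the two identical-marker crossings, which the lemma does not give, and you do need $D'$ connected, since if the smoothing disconnects $D$ then $2|D'|$ jumps by $2$ and your one-unit gain no longer yields strictness. Second, the bookkeeping claim $t(S')+t((S')^\vee)=t(S)+t(S^\vee)+1$ is the entire content of the strict inequality and is unproven: the transition $S^\vee\to S'^\vee$ at that crossing need not be a split creating a contractible cycle (it can be a join or a single cycle bifurcation, and even a split may create an essential cycle), as you yourself concede; without it no strictness ever appears. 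Third, the auxiliary lemma $r(D')\ge r(D)$ is false even for strongly prime shadows: in the torus example above the shadow is (vacuously) strongly prime, yet smoothing its crossing drops $r$ from $2$ to $1$, so strong primeness does not prevent a smoothing from killing essential homology. (If your first two points were in place you could actually dispense with this lemma, since $r(D')\ge r(D)-1$ always holds and still gives strictness, but as stated the lemma is wrong.) The paper avoids all of these issues by first reducing to the locally checkerboard colorable case via (ii), then choosing a \emph{third} crossing, away from the two identical markers, so that the induced state on the connected strongly prime $D'$ remains non-alternating, and inducting on statement (iii) itself via the same dichotomy used in (i).
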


\begin{proof}
Let us write $\Sigma = \Sigma_1 \cup \dots \cup \Sigma_n$ as a disjoint union of connected components. Any component disjoint from $D$ does not contribute to the terms in (i), (ii) and (iii), so it can be discarded. Therefore, we can assume that $D_i= D \cap \Sigma_i \neq \varnothing$ for $i=1, \dots, n.$

Since all terms of the inequalities of the statements are additive under taking disjoint unions of surfaces, it is enough to prove the statement for $\Sigma$ connected.

On the other hand, if $D=D_1 \cup D_2$ is disconnected, then $r(D)\leq r(D_1)+r(D_2)$. Thus $r(D)$ is subadditive, and since the other terms on the right hand side of (i), (ii) and (iii) are additive, it is enough to prove the proposition for connected shadows in connected surfaces. Assume henceforth that $\Sigma$ is a connected surface.

Let us prove the statement for single crossing abstract shadows $D$ now. Recall from the proof of Lemma \ref{cc-single} that single crossing shadows $D$ are of two types. For both of them, $r(D) \leq 2.$ If $r(D)=0$, then $t(S)+t(S^\vee)= 2$. If $r(D)=1,2$, then $t(S)+t(S^\vee)\leq 1$. Therefore, statement (i) holds for 1-crossing shadows.  Since shadows of the first type are locally checkerboard colorable and $t(S)=t(S^\vee)=0$ for shadows of the second type, statements (ii) and (iii) hold as well.

The proof of (i) proceeds by induction on the crossing number $c(D)$. Let $D$ be a connected shadow in $\Sigma$ with $c(D) \geq 2$  crossings. We assume that statement (i) has been established for all connected shadows in $\Sigma$ with fewer than $c(D)$ crossings.

Let $D'$ be the shadow resulting from smoothing at a crossing $x$ of $D$. We choose the smoothing so that $D'$ is connected. Notice that
\begin{equation}\label{r-eqn}
r(D)-1 \leq r(D') \leq r(D).
\end{equation}

Let $S$ be a state of $D$. The chosen smoothing of $x$ coincides either with the smoothing of $x$ in $S$ or in $S^\vee$ and, without loss of generality, we can assume that it coincides with the smoothing of $x$ in $S$. Then $S$ induces a state on $D'$ denoted $S'$. Clearly, $t(S')=t(S).$ The dual state $S'^\vee$ to $S'$  differs from $S^\vee$ at $x$ only. The states $S^\vee$ and $S'^\vee$ are adjacent in the cube of resolutions of $D$. Thus
\begin{equation}\label{t-eqn}
t(S'^\vee)-1\leq t(S^\vee) \leq t(S'^\vee)+1.
\end{equation}

\begin{lemma}\label{in-line-lemma}
Either $r(D')= r(D)$ or $t(S^\vee) \leq t(S'^\vee)$.
\end{lemma}

\begin{proof}
Assume that $t(S^\vee) > t(S'^\vee)$.
Then either two trivial loops in $S^\vee$ join to make a trivial loop in $S'^\vee$, or a trivial and a nontrivial loop in $S^\vee$ join to make a nontrivial loop in $S'^\vee$, or a trivial loop in $S^\vee$ splits to make two nontrivial loops in $S'^\vee$. In each case, $r(D') = r(D).$
\end{proof}

We prove the inductive step for part (i). By Lemma \ref{in-line-lemma}, there are the two possibilities.
If $r(D')=r(D)$, then \Cref{t-eqn} and the inductive assumption imply that
$$t(S)+t(S^\vee)\leq t(S')+t(S'^\vee)+1 \leq c(D')+2-r(D')+1=c(D)+2-r(D).$$

On the other hand, if $r(D')\ne r(D)$, then $t(S^\vee) \leq t(S'^\vee)$, and \Cref{r-eqn} and the inductive assumption imply that
$$t(S)+t(S^\vee)\leq t(S')+t(S'^\vee) \leq c(D')+2-r(D')=c(D)+2-r(D).$$
This completes the proof in case (i).

We prove part (ii) also by induction on $c(D)$. Let $D$ be a connected shadow in $\Sigma$ with $c(D) \geq 2$  crossings, and assume $D$ is not locally checkerboard colorable. We assume that statement (ii)  has been established for all connected shadows in $\Sigma$ with fewer than $c(D)$ crossings that are not locally checkerboard colorable. By Lemma \ref{cc-single}, there is a single cycle bifurcation in the cube of resolutions of $D$.

Let $D'$ be the shadow resulting from smoothing $D$ at a crossing $x$, and we assume $D'$ is connected and that the smoothing at $x$ coincides with the smoothing of $x$ in $S$.

If $D'$ is locally checkerboard colorable, then the transition from $S^\vee$ to $S'^\vee$ must be a single cycle bifurcation, for otherwise the local checkerboard coloring would extend from $D'$  to $D$.

Since the transition is a single cycle bifurcation, we have $t(S^\vee)=t(S'^\vee)$ and $r(D)=r(D')$. Therefore, applying  part (i) to $D'$, we see that
$$t(S)+t(S^\vee) = t(S)+t(S'^\vee) \leq c(D')+2-r(D') < c(D)+2-r(D).$$

If $D'$ is not locally checkerboard colorable, then we can apply the inductive hypothesis for part (ii) to $D'$ and use it to deduce the desired strict inequality just as before.  This completes the proof of (ii).

The last step is to prove statement (iii). We begin by verifying (iii) for connected shadows with one or two crossings. For a single crossing shadow $D$ of the first type, both states are alternating, so (iii) is vacuously true.  Single crossing shadow  of the second type are not locally checkerboard colorable, and so the result follows from (ii).

\begin{figure}[!ht]
\centering
\includegraphics[height=35mm]{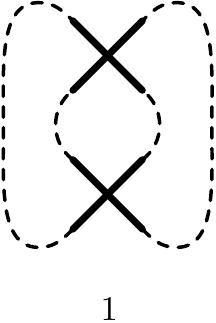}
\hspace{4mm}
\includegraphics[height=35mm]{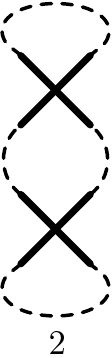}
\hspace{4mm}
\includegraphics[height=35mm]{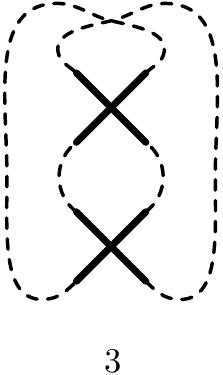}
\hspace{4mm}
\includegraphics[height=35mm]{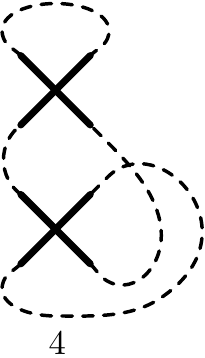}
\hspace{4mm}
\includegraphics[height=35mm]{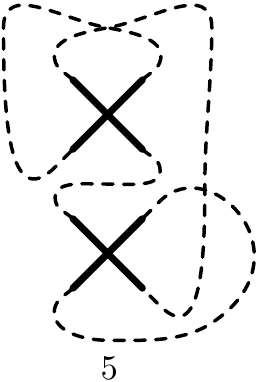}
\caption{Connected shadow diagrams with $2$-crossings.} \label{f-shadows}
\end{figure}

All abstract connected $2$-crossing shadows $D$ are depicted in \Cref{f-shadows}. For a type 1 shadow $D$, its non-alternating states appear in \Cref{f-2122} (left). Note that $0\leq r(D) \leq 3$ and $0\leq t(S),t(S^\vee)\leq 1.$ If $r(D)= 0$ or $1$, then  $t(S)+t(S^\vee) \leq 2$ and $3 \leq c(D)+2-r(D)$. Thus  (iii) holds in this case. If $r(D)=2$ or $3$, then $t(S)=t(S^\vee)=0$, and statement (iii) holds.

For a type 2 shadow $D$, its non-alternating states are shown in \Cref{f-2122} (right). Note that $0\leq r(D) \leq 3$ and $0\leq t(S),t(S^\vee)\leq 2.$  Since $D$ is strongly prime, $r(D)>0$ and $t(S),t(S^\vee)\leq 1.$ If $r(D)=1$, then $t(S)+t(S^\vee) \leq 2$; if $r(D)=2$, then $t(S)+t(S^\vee)\leq 1$; and  if $r(D)=3$, then $t(S)+t(S^\vee)=0$. In all three cases, statement (iii) is seen to hold.

\begin{figure}[!h]
\centering
\begin{subfigure}{0.36\textwidth}
\centering
\includegraphics[height=25mm]{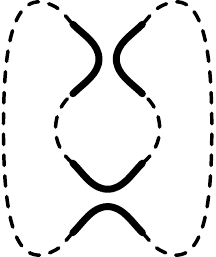}\hspace{.3cm}\includegraphics[height=25mm]{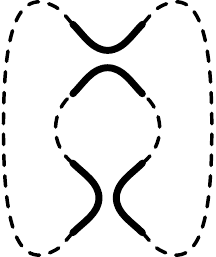}
\subcaption{Type 1.}
\end{subfigure}
\begin{subfigure}{0.36\textwidth}
\centering
\includegraphics[height=32mm]{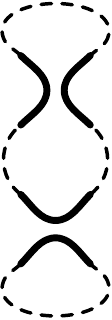}\hspace{.4cm}\includegraphics[height=32mm]{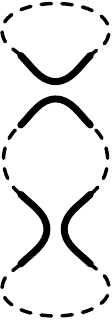}.
\subcaption{Type 2.}
\end{subfigure}
\caption{Non-alternating states on 2-crossing shadows of type 1 and 2.}\label{f-2122}
\end{figure}

Note that none of the shadows of the third, fourth, and fifth type are locally checkerboard colorable. Therefore statement (iii) follows from (ii) in these cases.

The proof of (iii) proceeds by induction on the crossing number $c(D)$. Let $D$ be a strongly prime connected shadow in $\Sigma$. By (ii), we can assume that $D$ is locally checkerboard colorable. We assume additionally that $c(D) \geq 3$ and that statement (iii) has been established for all strongly prime  shadows in $\Sigma$ with fewer than $c(D)$ crossings.

Let $S$ be a non-alternating state for $D$. Then $S$ has two consecutive smoothings that are identical, and we choose a third crossing $x$ of $D$. By \Cref{l-strongly-prime}, one of the smoothings of $x$ yields a shadow which is connected and strongly prime. Let $D'$ be the resulting shadow. As before, we assume that the smoothing at $x$ coincides with the smoothing of $x$ in $S$.
The state $S$ induces a state on $D'$, denoted $S'$, which is  non-alternating.
Since $D'$ is connected, one can apply Lemma \ref{in-line-lemma} as before and argue again by induction that (iii) holds for $D$.
\end{proof}

\subsection{Proof of \Cref{t-altern-span}}
\label{s-proof-altern-span}

Part (i) follows immediately by combining  \Cref{cor-adequate} and \Cref{p-non-alt-ineq} (i).

For parts (ii) and (iii), note that if $D$ is a connected sum of $D_0\subset \Sigma,$ and $D_1, \ldots, D_k\subset S^2$ then
\begin{equation} \label{e-csf}
[D]_\Sigma = \delta^{-k} [D_0]_\Sigma\cdot \prod_{i=1}^k [D_i]_{S^2}.
\end{equation}
Therefore, it is enough to prove parts (ii) and (iii) for \emph{prime} diagrams
(alternating for (ii) and non-alternating for (iii)).

The condition that $D$ is prime implies that it is not a nontrivial connected sum diagram as above. More precisely, a link diagram $D$ on $\Sigma$ is said to be  {\bf prime} if any contractible simple loop $\gamma$ in $\Sigma$ that meets $D$ transversely at two points bounds a 2-disk that intersects $D$ in an unknotted arc (possibly with self-crossings).

Proof of (iii):  Assume $D$ is prime. If the shadow diagram of $D$ is strongly prime, then the statement follows from \Cref{cor-adequate} and \Cref{p-non-alt-ineq} (iii). If it is not strongly prime then $D$ must contain a self-crossing trivial arc. Let $D'$ be obtained by replacing it by a simple trivial arc. Since ${\rm span}([D]_\Sigma)$ is invariant under Reidemeister moves and $r(D')=r(D)$,
$${\rm span}([D]_\Sigma)={\rm span}([D']_\Sigma)\leq 4c(D')+4|D'|-2r(D')< 4c(D)+4|D|-2r(D),$$
by part (i).

Our proof of (ii) follows that of \cite[Theorem 2.9]{Boden-Karimi-2019}.
Since both sides of the equality in (ii) are additive under disjoint union of diagrams, it is enough to prove it for connected diagrams.

By \Cref{p-alter-cc}, $D$ is checkerboard colorable. Then all regions of one color, say white, are enclosed by the cycles in the state $S_A$ of $D$, and all regions of the other color, i.e., black, are enclosed by the cycles in the state $S_B$. Therefore, the numbers of white and black regions are $t(S_A)$ and $t(S_B)$, respectively. Since $D$ defines a cellular decomposition of $\Sigma$ into $c(D)$ 0-cells, $2c(D)$ $1$-cells, and $t(S_A)+t(S_B)$ $2$-cells,
$$2-2g(\Sigma)=\chi(\Sigma)=c(D)-2c(D)+t(S_A)+t(S_B),$$
and $$t(S_A)+t(S_B)=c(D)+2-2g(\Sigma).$$ By \Cref{p-dminmax},
\begin{align*}
{\rm span}([D]_\Sigma)&= d_{max}([D]_\Sigma))-d_{min}([D]_\Sigma)),\\
&= 2c(D) + 2t(S_A)+2t(S_B),\\
&= 4c(D)+4-4g(\Sigma). \qedhere
\end{align*}

\bibliographystyle{alpha}

\newcommand{\etalchar}[1]{$^{#1}$}

\end{document}